\theoremstyle{plain}
\newtheorem{theorem}{Theorem}[section]
\newtheorem{proposition}[theorem]{Proposition}
\newtheorem{lemma}[theorem]{Lemma}
\theoremstyle{definition}
\theoremstyle{remark}
\newtheorem{remark}[theorem]{Remark}
\newcommand{\R}{\mathbb{R}}
\newcommand{\cG}{\mathcal{G}}
\newcommand{\cE}{\mathcal{E}}
\newcommand{\norm}[1]{\left\lVert #1 \right\rVert}
\newcommand{\normsq}[1]{\norm{#1}^{2}}
\newcommand{\sprod}[2]{\left\langle #1, #2 \right\rangle}
\newcommand{\gra}[1][]{\operatorname{gra}}
\newcommand{\nx}{\nabla_{x}}
\newcommand{\ny}{\nabla_{y}}
\newcommand{\ph}{\, \cdot \,}
\newcommand{\minus}{\scalebox{0.75}[1.0]{$-$}}
\newcommand{\Gap}{\mathtt{Gap}}
\newcommand{\Res}{\mathtt{Res}}
\DeclareMathOperator*{\argmin}{arg\,min}
\begin{document}

\twocolumn[
\icmltitle{A Fast Optimistic Method for Monotone Variational Inequalities}

% It is OKAY to include author information, even for blind
% submissions: the style file will automatically remove it for you
% unless you've provided the [accepted] option to the icml2023
% package.

% List of affiliations: The first argument should be a (short)
% identifier you will use later to specify author affiliations
% Academic affiliations should list Department, University, City, Region, Country
% Industry affiliations should list Company, City, Region, Country

% You can specify symbols, otherwise they are numbered in order.
% Ideally, you should not use this facility. Affiliations will be numbered
% in order of appearance and this is the preferred way.
\icmlsetsymbol{equal}{*}

\begin{icmlauthorlist}
\icmlauthor{Michael Sedlmayer}{DS}
\icmlauthor{Dang-Khoa Nguyen}{Math}
\icmlauthor{Radu Ioan Bo\c{t}}{DS,Math}
\end{icmlauthorlist}

\icmlaffiliation{DS}{Research Network Data Science, University of Vienna, Vienna, Austria}
\icmlaffiliation{Math}{Faculty of Mathematics, University of Vienna, Vienna, Austria}

\icmlcorrespondingauthor{Michael Sedlmayer}{michael.sedlmayer@univie.ac.at}

% You may provide any keywords that you
% find helpful for describing your paper; these are used to populate
% the "keywords" metadata in the PDF but will not be shown in the document
\icmlkeywords{Variational Inequalities, Acceleration, Convergence Rates, ICML}

\vskip 0.3in
]

% this must go after the closing bracket ] following \twocolumn[ ...

% This command actually creates the footnote in the first column
% listing the affiliations and the copyright notice.
% The command takes one argument, which is text to display at the start of the footnote.
% The \icmlEqualContribution command is standard text for equal contribution.
% Remove it (just {}) if you do not need this facility.

\printAffiliationsAndNotice{}  % leave blank if no need to mention equal contribution
%\printAffiliationsAndNotice{\icmlEqualContribution} % otherwise use the standard text.

\begin{abstract}
	We study monotone variational inequalities that can arise as optimality conditions for constrained convex optimisation or convex-concave minimax problems and propose a novel algorithm that uses only one gradient/operator evaluation and one projection onto the constraint set per iteration. The algorithm, which we call \emph{fOGDA-VI}, achieves a $o(\nicefrac{1}{k})$ rate of convergence in terms of the restricted gap function as well as the natural residual for the \emph{last iterate}. Moreover, we provide a convergence guarantee for the sequence of iterates to a solution of the variational inequality. These are the best theoretical convergence results for numerical methods for (only) monotone variational inequalities reported in the literature. To empirically validate our algorithm we investigate a two-player matrix game with mixed strategies of the two players. Concluding, we show promising results regarding the application of fOGDA-VI to the training of generative adversarial nets.
\end{abstract}

\section{Introduction}
Variational inequalities are fundamental models in various fields such as optimisation, e.g., when determining primal-dual pairs of optimal solutions of constrained convex optimisation problems~\cite{BauschkeCombettes}, economics, game theory~\cite{GameTheory}, or partial differential equations. Recently, they have attracted particularly significant attention in the area of machine learning due to the fundamental role they play, for instance, in multi agent reinforcement learning~\cite{omidshafiei2017deep}, robust adversarial learning~\cite{madry2018towards} and the training of generative adversarial networks (GANs)~\cite{GAN, GAN2}.

\subsection{Problem Setting}
In the following we consider $\R^{d}$ with its standard inner product denoted by $ \sprod{\ph}{\ph} $ and induced norm $ \norm{\ph} $. Let $F: \R^{d} \to \R^{d}$ be a monotone operator, i.e.,
\begin{equation}
	\sprod{F(w) - F(z)}{w - z} \geq 0
	\quad \forall w,z \in \R^{d},
\end{equation}
which is also $L$-Lipschitz continuous, i.e.,
\begin{equation}
	\norm{F(w) - F(z)} \leq L \norm{w - z}
	\quad \forall w,z \in \R^{d}.
\end{equation}
Furthermore, let $C$ be a nonempty closed convex subset of $ \R^{d} $. Then the (strong) classical \emph{variational inequality} problem consists of finding $ z^{\ast} \in C $ such that
\begin{equation}\label{intro:pb:vi}
	\sprod{F(z^{\ast})}{z - z^{\ast}} \geq 0
	\quad \forall z \in C.
\end{equation}
For the following considerations we assume that the solution set of \eqref{intro:pb:vi} is nonempty, i.e., $ \Omega := \{ z^{\ast} \in C \mid \sprod{F(z^{\ast})}{z - z^{\ast}} \geq 0 \quad \forall z \in C \} \neq \emptyset $.

Note that in the case of $ F $ being monotone and continuous, the above \emph{strong} formulation is equivalent to the following problem,
\begin{equation}\label{eq:VI-weak}
	\sprod{F(z)}{z - z^{\ast}} \geq 0
	\quad \forall z \in C,
\end{equation}
which is known as the \emph{weak} version of the variational inequality.
Writing $ N_{C}(z) := \{ w \in \R^{d} \mid \sprod{v - z}{w} \leq 0 \quad  \forall v \in C \} $, for $ z \in C $, and $ N_{C}(z) := \emptyset$, for $z \notin C$, to denote the normal cone of $ C $, condition \eqref{intro:pb:vi} is equivalent to the following \emph{monotone inclusion}, where want to find $ z^{\ast} \in \R^{d} $ such that
\begin{equation}\label{intro:pb:mi}
	0 \in F(z^{\ast}) + N_{C}(z^{\ast}).
\end{equation}

\subsection{Contribution}
We introduce an accelerated first order method for solving the constrained variational inequality problem~\eqref{intro:pb:vi} that uses a single operator evaluation and a single projection in each iteration.
Our proposed algorithm, called fOGDA-VI, exhibits a $ o(\nicefrac{1}{k}) $ rate of convergence for the last iterate which is better than the $ \mathcal{O}(\nicefrac{1}{k}) $ results for other accelerated algorithms.
Moreover, fOGDA-VI exhibits convergence of the generated sequence to a solution of the variational inequality under investigation, which is not necessarily the case for other accelerated methods~\cite{cai2022acceleratedalgorithms,cai2022acceleratedsingle} that have been proposed for~\eqref{intro:pb:vi}.

\subsection{Overview}
This paper is structured as follows. In Section~\ref{sec:VI} we discuss suitable convergence measures and (accelerated) solution methods for monotone variational inequalities governed by a monotone and Lipschitz operator. The algorithm fOGDA-VI and the accompanying convergence results are presented in Section~\ref{sec:main}, which is followed by illustrations of the empirical performance of the proposed method when solving two-player matrix games and in the training of GANs in Section~\ref{sec:num}.

\section{Solving Variational Inequalities}\label{sec:VI}
In this section we recall appropriate measures of convergence for solution methods for monotone variational inequalities and provide an overview on the most important solution methods from the literature, both nonaccelerated and accelerated ones, for solving~\eqref{intro:pb:vi}.

\subsection{Convergence Measures}
We start with presenting three suitable measures that are commonly used to judge the quality of prospective solutions.

\paragraph{Restricted gap function}
For $z^{\ast} \in \Omega, z_0 \in \R^{d}$ and $\delta(z_{0}) := \norm{z^{\ast} - z_{0}} $, the restricted \textit{gap} function associated with the variational inequality \eqref{intro:pb:vi} is defined as
\begin{equation}
	\Gap(z) := \sup_{w \in C \cap \mathbb{B}(z^{\ast}; \delta(z_{0}))} \; \sprod{F(w)}{z - w} \geq 0.
\end{equation}
It is also known as \emph{merit} function~\cite{nesterov2007dual} and it measures how much the statement of~\eqref{eq:VI-weak} is violated.

In the above definition, $ \mathbb{B}(z; \delta) := \left\{ w \in \R^{d} \mid \norm{w - z} \leq \delta \right\}$ denotes the closed ball centred at $ z \in \R^{d} $ with radius $ \delta > 0 $. The restriction of the supremum to a bounded set, particularly the ball $\mathbb{B} (z^{\ast} ; \delta(z_{0}))$ in our case, is essential to avoid an infinitely large gap when $C$ is unbounded.

\paragraph{Tangent residual}
Another quantity that can be used to measure the quality of a solution  candidate with respect to the variational inequality \eqref{intro:pb:vi} is based on the observation that the latter it is equivalent to the monotone inclusion \eqref{intro:pb:mi}. The so-called \emph{tangent residual} is given by
\begin{equation*}
	r(z) := \inf_{\zeta \in N_{C} (z)} \; \left\lVert F(z) + \zeta \right\rVert .
\end{equation*}
In a straightforward way this quantity extends the usual measure $ \norm{F(z)} $ in the unconstrained setting of monotone equations, where the goal is to find $ z^{\ast} \in \R^{d} $ such that
\begin{equation}\label{intro:pb:eq}
	F(z^{\ast}) = 0,
\end{equation}
to variational inequalities by measuring the distance from $0$ to $F(z) + N_C(z)$. Note, if $z \notin C$ we have $N_{C} (z) = \emptyset$ and thus $ r(z) = +\infty $.

\paragraph{Natural residual}
Another useful convergence measure is the \emph{natural residual}, which in fact is upper bounded by the tangent residual, see Section \ref{app:prelim}. For this we write $P_{C}(z)$ to denote the projection of $z \in \R^{d}$ onto the closed convex set $C$, which is uniquely defined and given by $ P_{C}(z) = \argmin_{w \in C} \norm{w - z} $. Using the characterisation of the projection via the normal cone (see Proposition~6.46 in~\cite{BauschkeCombettes}) we observe that
\begin{equation}
	0 \in F(z^{\ast}) + N_{C}(z^{\ast})
	\hspace{0.5em} \Leftrightarrow \hspace{0.5em}
	z^{\ast} = P_{C} \left[ z^{\ast} - F(z^{\ast}) \right].
\end{equation}
This motivates to look at
\begin{equation}
	\Res(z) := \norm{z - P_{C} \left[ z - F(z) \right]},
\end{equation}
which is also known as \emph{fixed point residual}. Note, in the unconstrained case~\eqref{intro:pb:eq} the two residuals coincide
\begin{equation}
	r(z) = \Res(z) = \norm{F(z)} \quad \forall z \in \R^{d}.
\end{equation}

\subsection{Solution Methods}
In this work we are interested exclusively in first order methods that are fully splitting, i.e., algorithms that only use direct evaluations of the operator $F$ and projections onto $C$ as main building blocks. As a general Lipschitz continuous operator is not necessarily cocoercive, the simplest first order method that is splitting -- the Forward-Backward (FB) algorithm -- can not be used to solve~\eqref{intro:pb:vi}.

\subsubsection{Nonaccelerated Solution Methods}\label{sub:nonaccmeth}
\paragraph{Extragradient (EG) method}
Korpelevich~\yrcite{extragradient} and Antipin~\yrcite{antipin1976method} proposed to take a second forward evaluation of $F$ in each iteration in order to solve \eqref{intro:pb:vi}. This results in the following scheme for $ k \geq 0 $
\begin{equation}\label{algo:EG}
	\text{EG:}
	\left\lfloor
		\begin{array}{l}
			w_{k} = P_{C} \left[ z_{k} - \gamma F(z_{k}) \right] \\
			z_{k+1} = P_{C} \left[ z_{k} - \gamma F(w_{k}) \right]
		\end{array}
	\right.
\end{equation}
which converges to a solution of \eqref{intro:pb:vi} for $0 < \gamma < \nicefrac{1}{L}$.

It is known that EG converges with a rate of $\mathcal{O} (\nicefrac{1}{K})$ in terms of the restricted gap function for the \emph{averaged}, or \emph{ergodic}, iterates
\begin{equation}
	\bar{w}_{K} := \frac{1}{K} \sum_{k = 1}^{K} w_{k}
\end{equation}
in both the unconstrained~\cite{mirror-prox,nesterov2007dual,ogda-rate-1/k} and the constrained case~\cite{hsieh2019convergence}, which further seems to be optimal~\cite{ouyang2021lower}.
The \emph{best iterate} convergence in terms of the tangent residual, however, is known to yield a rate of $\mathcal{O} (\nicefrac{1}{\sqrt{K}})$~\cite{extragradient,facchinei2003finite}, i.e.,
\begin{equation}
	\min_{1 \leq k \leq K} r (z_{k}) = \mathcal{O} \left( \frac{1}{\sqrt{K}} \right)
	\quad \text{as } K \to +\infty .
\end{equation}
The more desirable \emph{last iterate} convergence rate for EG was derived only recently in the unconstrained case~\cite{gorbunov2022extragradient} which was then extended to the constrained case as well~\cite{cai2022tight}.
In fact,
\begin{equation}
	\Gap(z_{k}) = \mathcal{O} \left( \frac{1}{\sqrt{k}} \right)
	\hspace{0.5em} \text{and} \hspace{0.5em}
	r(z_{k}) = \mathcal{O} \left( \frac{1}{\sqrt{k}} \right),
\end{equation}
as $k \to + \infty$. The result for the restricted gap function~\cite{last-iterate-rate-slow} as well as for the residuals is actually tight, meaning that the convergence rate for the averaged iterates is better than for the last iterate.
Nevertheless, we emphasise that the latter one is more appealing and that the averaged iterates might still show acceptable behaviour while the actual trajectory of iterates cycles around the set of solutions~\cite{mertikopoulos2018cycles}.

\paragraph{Popov's method}
In the saddle point setting Popov~\yrcite{Popov} introduced the following algorithm which, when applied to~\eqref{intro:pb:mi}, reads for $ k \geq 1$
\begin{equation}\label{algo:OGDA}
	\text{Popov:}
	\left\lfloor
		\begin{array}{l}
			w_{k} = P_{C} \left[ z_{k} - \gamma F(w_{k-1}) \right] \\
			z_{k+1} = P_{C} \left[ z_{k} - \gamma F(w_{k}) \right]
		\end{array}
	\right.
\end{equation}
which converges to a solution of \eqref{intro:pb:vi} for $ 0 < \gamma < \nicefrac{1}{2L} $.
The update rule of \eqref{algo:OGDA} is very similar to \eqref{algo:EG} but requires $F$ to be evaluated only once per iteration. Actually, Popov differs from EG only in the first block, where $F(z_{k})$ is replaced by $F(w_{k-1})$. In the unconstrained case,~\eqref{algo:OGDA} can be written in one line, yielding a method usually known as Optimistic Gradient Descent Ascent (OGDA), a name that was coined by works on GAN training~\cite{OGDA,OGDA2}.

Given the close connection between EG and OGDA, it is not surprising that many convergence rate results hold in a similar way. In terms of the restricted gap OGDA converges like $\mathcal{O}(\nicefrac{1}{k})$ and $\mathcal{O}(\nicefrac{1}{\sqrt{k}})$ for averaged iterates~\cite{ogda-rate-1/k} and last iterates~\cite{last-iterate-rate-slow}, respectively, where the latter one is optimal. The convergence rate in terms of the residuals is $\mathcal{O}(\nicefrac{1}{\sqrt{k}})$ and it can not be improved in general~\cite{golowich2020tight,chavdarova2021last,cai2022tight}, as seen for EG.

We have seen that both EG and Popov's method require two projections in each iteration. One might think that this is necessary to obtain convergent algorithms for the variational inequality~\eqref{intro:pb:mi} when the operator $ F $ is merely Lipschitz continuous and not cocoercive. This is not the case, however, and in the following we will look at algorithms that need only one evaluation of the projection operator per iteration.

\paragraph{Forward-Backward-Forward (FBF) method}
One of these single-call projection methods is the FBF method. It was proposed by Tseng~\yrcite{tseng-fbf} and applied to~\eqref{intro:pb:mi} it iterates for $ k \geq 0 $
\begin{equation}\label{algo:Tseng}
	\text{FBF:}
	\left\lfloor
		\begin{array}{l}
			w_{k} = P_{C} \left[ z_{k} - \gamma F(z_{k}) \right] \\
			z_{k+1} = w_{k} - \gamma F(w_{k}) + \gamma F(z_{k})
		\end{array}
	\right.
\end{equation}
which converges to a solution of \eqref{intro:pb:vi} for $ 0 < \gamma < \nicefrac{1}{L} $.
Notice that in each iteration this iterative scheme performs two evaluations of $F$ along the sequences $(z_{k})_{k \geq 0}$ and $(w_{k})_{k \geq 0}$, similar to EG -- the first line of FBF and EG is even identical. In the second line, however, instead of performing another projection the forward step regarding the intermediate iterate $ w_{k} $ is corrected by the previous update $ F(z_{k}) $.
Moreover, for the unconstrained problem~\eqref{intro:pb:eq}, i.e., in the absence of projections, FBF and EG are equivalent.

\paragraph{Forward-Reflected-Backward (FRB) method}
Another single-call projection method that even requires only one evaluation of $ F $ like the basic FB algorithm was proposed by Malitsky and Tam~\yrcite{yura-mat-reflection}. The FRB algorithm is given for $ k \geq 1$ by
\begin{equation}\label{algo:Malitsky-Tam}
	\text{FRB:}
	\left\lfloor
		\begin{array}{l}
			z_{k+1} = P_{C} \left[ z_{k} - 2 \gamma F(z_{k}) + \gamma F(z_{k-1}) \right]
		\end{array}
	\right.
\end{equation}
and converges to a solution of \eqref{intro:pb:vi} for $ 0 < \gamma < \nicefrac{1}{2L} $.
Note that FRB can be deducted from FBF by reusing $ F(w_{k-1}) $ instead of $ F(z_{k}) $ in the first line of~\eqref{algo:Tseng}, similarly to how Popov's method can be obtained from EG. Hence~\eqref{algo:Malitsky-Tam} coincides with~\eqref{algo:OGDA} and OGDA in the unconstrained case.

\paragraph{Projected Reflected Gradient (RG) method}
Before investigating FRB, Malitsky~\yrcite{yura-reflection} introduced another similar method where the order of the reflection and the forward step is reversed. In particular, a second forward step can be avoided by evaluating $ F $ at an appropriate linear combination of the iterates. This gives rise to the following method for $ k \geq 1 $
\begin{equation}\label{algo:Malitsky}
	\text{RG:}
	\left\lfloor
		\begin{array}{l}
			w_{k} = 2 z_{k} - z_{k-1} \\
			z_{k+1} = P_{C} \left[ z_{k} - \gamma F(w_{k}) \right]
		\end{array}
	\right.
\end{equation}
which converges to a solution of \eqref{intro:pb:vi} for $ 0 < \gamma < \nicefrac{(\sqrt{2}-1)}{L} $.

Despite the similarities in the construction and iterate convergence, nonasymptotic convergence is less understood in the case of single-call projection methods.
For example Banert and Bo{\c t}~\yrcite{banert2018forward} derived for Tseng's method an ergodic $\mathcal{O}(\nicefrac{1}{k})$ rate in terms of function values in the context of convex optimisation; see also \cite{tseng-minimax} for an ergodic $\mathcal{O}(\nicefrac{1}{\sqrt{k}})$ convergence result in terms of the restricted gap function in the stochastic setting. For Malitsky's RG algorithm convergence in terms of the gap function and residuals like $\mathcal{O}(\nicefrac{1}{\sqrt{k}})$ for the last iterate was established recently~\cite{cai2022acceleratedsingle}.

\subsubsection{Accelerated Solution Methods}\label{sub:accmeth}

\paragraph{Extra Anchored Gradient (EAG) algorithm}
An accelerated algorithm for solving the monotone equation \eqref{intro:pb:eq} that is based on EG, called Extra Anchored Gradient (EAG) algorithm, was proposed by Yoon and Ryu~\yrcite{yoon2021accelerated}. It is designed by using \emph{anchoring}, a technique that can be traced back to Halpern's algorithm~\yrcite{halpern1967fixed}.
This iterative scheme exhibits a convergence rate of
\begin{equation}
	\norm{F(z_{k})} = \mathcal{O} \left( \frac{1}{k} \right)
	\quad \textrm{as } k \to + \infty.
\end{equation}
These considerations have been followed by extension of EAG to the constrained setting~\cite{cai2022acceleratedalgorithms}, where the authors consider for $ k \geq 0 $
\begin{equation}\label{algo:EAG}
	\text{EAG:}
	\left\lfloor
		\begin{array}{l}
			w_{k} = P_{C} \left[ z_{k} - \gamma F(z_{k}) + \frac{1}{k+1} (z_{0} - z_{k}) \right] \\
			z_{k+1} = P_{C} \left[ z_{k} - \gamma F(w_{k}) + \frac{1}{k+1} (z_{0} - z_{k}) \right]
		\end{array}
	\right.
\end{equation}
with $ 0 < \gamma < \nicefrac{1}{\sqrt{3} L} $.
One can notice that the algorithm uses two operator evaluations and two projection steps, like EG, and in the unconstrained case it coincides with its projection free counterpart~\cite{yoon2021accelerated}, maintaining the $\mathcal{O}(\nicefrac{1}{k})$ convergence rate for gap function and residuals.

\paragraph{Accelerated Reflected Gradient (ARG) algorithm}
Similar to the nonaccelerated methods from the previous subsection, one can also reduce the number of necessary operator evaluations and projections to one each per iteration. This was done by investigating an accelerated version~\cite{cai2022acceleratedsingle} of the projected reflected gradient method~\eqref{algo:Malitsky} with convergence rate $\mathcal{O}(\nicefrac{1}{k})$.
The method is given for $ k \geq 1 $ by
\begin{equation}\label{algo:ARG}
	\text{ARG:}
	\left\lfloor
		\begin{array}{l}
			w_{k} = 2 z_{k} - z_{k-1} + \frac{1}{k+1} (z_{0} - z_{k}) \\
				\hspace{9mm}- \frac{1}{k} (z_{0} - z_{k-1}) \\
			z_{k+1} = P_{C} \left[ z_{k} - \gamma F(w_{k}) + \frac{1}{k+1} (z_{0} - z_{k}) \right]
		\end{array}
	\right.
\end{equation}
with $ 0 < \gamma \leq \nicefrac{1}{12 L} $.

It is worth mentioning that for constrained EAG~\cite{cai2022acceleratedalgorithms} and ARG~\cite{cai2022acceleratedsingle} there are no guarantees for the iterates to converge to a solution and that, in spite of the fast theoretical convergence rate, the effect of the \emph{anchor} $z_0$, to which the algorithm returns in every iteration, on the convergence speed is a slowing one, as we will see in the numerical experiments.

\paragraph{Further accelerated algorithms}
Further variants of anchoring based algorithms have been proposed by Tran-Dinh~\yrcite{tran2022connection} and together with Luo~\cite{tran2021halpern}, which all exhibit the same convergence rate in terms of the operator norm as EAG for \eqref{intro:pb:eq}.
Monotone inclusions are also considered in these works, with a more general operator than the normal cone, but this requires either taking a backward step or additionally asking for cocoercivity of $F$. In the same spirit, an $o(\nicefrac{1}{k})$ rate of convergence together with convergence of iterates was shown for an accelerated version of the Krasnosel'ski\u{\i}-Mann algorithm~\cite{bot2022fastKM}.

\paragraph{Explicit Fast OGDA (fOGDA) algorithm}
Another approach which is different from the Halpern-type methods mentioned above was investigated in~\cite{bot2022fastOGDA}.
An appropriate (explicit) discretisation of a second-order dynamical system with vanishing damping term gives rise to an accelerated algorithm related to OGDA, called \emph{fast OGDA} (fOGDA), which will constitute a starting point for our considerations in the following.
The fast OGDA algorithm~\cite{bot2022fastOGDA} for solving the monotone equation~\eqref{intro:pb:eq}  is given for $ k \geq 1 $ by
\begin{equation}
	\text{fOGDA:}
	\left\lfloor
		\begin{array}{l}
			\!\!w_{k} = z_{k} + \frac{k}{k + \alpha} \left( z_{k} - z_{k-1} \right) - \gamma \frac{\alpha}{k + \alpha} F(w_{k-1})\\[1ex]
			\!\!z_{k+1} = w_{k} - \gamma
%			\left( 1 + \frac{k}{k + \alpha} \right)
			\frac{2k + \alpha}{k + \alpha}
			\left( F(w_{k}) - F(w_{k-1}) \right) \\[1ex]
		\end{array}
	\right.
\end{equation}
and converges to a solution of~\eqref{intro:pb:eq}  for $ 0 < \gamma < \nicefrac{1}{4 L} $ and $ \alpha > 2 $. It was shown that fOGDA exhibits convergence rates like
\begin{equation}
	\Gap(z_{k}) = o \left( \frac{1}{k} \right)
	\hspace{0.5em} \text{and} \hspace{0.5em}
	\norm{F(z_{k})} = o \left( \frac{1}{k} \right)
\end{equation}
as $ k \to +\infty $.

\begin{algorithm*}[tb]
	\caption{fOGDA-VI}
	\label{algo:split}
	\begin{algorithmic}
		\STATE {\bfseries Input:} momentum parameter $ \alpha > 2 $; starting values $ z_{0}, \, w_{0} \in \R^{d} $, $ z_{1} \in C $, $ \zeta_{1} \in N_{C}(z_{1}) $; step size $ 0 < \gamma < \nicefrac{1}{4L} $; number of iterations $ K > 1 $.
		\FOR{$k=1$ {\bfseries to} $K$}
		\STATE Compute
		\begin{align}
		w_{k} &= z_{k} + \frac{k}{k + \alpha} \left( z_{k} - z_{k-1} \right) - \gamma \frac{\alpha}{k + \alpha} \left( F(w_{k-1}) + \zeta_{k} \right) \\
		z_{k+1} &= P_{C} \left[ w_{k} - \gamma \left( 1 + \frac{k}{k + \alpha} \right) \left( F(w_{k}) - F(w_{k-1}) - \zeta_{k} \right) \right] \\
		\zeta_{k+1} &= \frac{k + \alpha}{\gamma (2 k + \alpha)} \left( w_{k} - z_{k+1} \right) - \left( F(w_{k}) - F(w_{k-1}) - \zeta_{k} \right)
		\end{align}
		\ENDFOR
	\end{algorithmic}
\end{algorithm*}

\section{Main Results}\label{sec:main}
In this section, we will first motivate the changes necessary to extend fOGDA to solve the variational inequality~\eqref{intro:pb:vi} and introduce our newly proposed method which we call \emph{fOGDA-VI}. This is followed by formally stating the convergence results of fOGDA-VI -- convergence of the iterates to a solution as well as convergence in terms of the restricted gap and the residuals like $ o(\nicefrac{1}{k}) $ for the \emph{last} iterate.

\subsection{Extending fOGDA to the Constrained Case}
It can be seen empirically, that incorporating one (or more) projections to the unconstrained fOGDA method in a naive way, similar to FRB or Popov, is not sufficient to obtain a solution method for~\eqref{intro:pb:vi}. Instead, the idea is to introduce for every $ k \geq 1 $ an appropriate element $ \zeta_{k} $ of the normal cone $ N_{C}(z_{k}) $. This is done by replacing $F(w_{k-1})$ by the sum $ F(w_{k-1}) + \zeta_{k} $. One might think that because of this the suitable choice would be to take $ \zeta_{k} \in N_{C}(w_{k-1}) $, but this is not the case which can be motivated as follows.
For the unconstrained case, i.e., when solving the monotone equation~\eqref{intro:pb:eq}, we want to find a sequence $ (z_{k})_{k \geq 1} $ yielding $ \norm{F(z_{k})} \to 0 $. However, in the constrained case, i.e., when tackling the monotone inclusion~\eqref{intro:pb:mi}, we aim to establish sequences $ (z_{k})_{k \geq 1} $, $ (\zeta_{k})_{k \geq 1} $ with $ \zeta_{k} \in N_{C}(z_{k}) $ such that
\begin{equation}
	\norm{F(z_{k}) + \zeta_{k}} \to 0.
\end{equation}
Then instead of fOGDA we have an algorithm which is given for every $k \geq 1$ by
\begin{equation}\label{eq:fogda-implicit-proj}
\begin{split}
	w_{k} &= z_{k} + \tfrac{k}{k + \alpha} \left( z_{k} - z_{k-1} \right) - \gamma \tfrac{\alpha}{k + \alpha} \left( F(w_{k-1}) + \zeta_{k} \right), \\
	z_{k+1}	&= w_{k} - \gamma \tfrac{2k + \alpha}{k + \alpha} \left( F(w_{k}) - F(w_{k-1}) + \zeta_{k+1} - \zeta_{k} \right).
\end{split}
\end{equation}
At first glance this method seems to be implicit, as both $ z_{k+1} $ and $ \zeta_{k+1} $ appear on the same line. However, the second line in~\eqref{eq:fogda-implicit-proj} can be used to formulate a projection step (see Appendix~\ref{app:prelim} for details). Even though from the perspective of~\eqref{eq:projNC} the appearance of the normal cone element is a natural consequence of the projection, finding its correct formulation is a highly non-trivial task.
These considerations give rise to Algorithm~\ref{algo:split}, which we call \emph{fOGDA-VI}.

The two main differences of our proposed method -- introduction of a projection at a specific spot as well as explicit computation of a particular element in the normal cone -- are deemed to be necessary. Changing (or even neglecting) either of them results in algorithms that fail to converge in general.

\begin{remark}
	Initialisation of fOGDA-VI, however, is easy as for general $ z_{1} \in C $ it is sufficient to take $ \zeta_{1} = 0 $. With arbitrary $ \hat{z} \in \R^{d} $, one can also take $ z_{1} := P_{C}(\hat{z}) $ and $ \zeta_{1} := \hat{z} - z_{1} \in N_{C}(z_{1}) $
\end{remark}

\subsection{Convergence Statements}
The first main result concerns the convergence of the sequence of iterates to an element in $\Omega$.
\begin{theorem}\label{thm:conv}
	Let $(z_{k})_{k \geq 0}$ be the sequence generated by Algorithm~\ref{algo:split}. Then the sequence $(z_{k})_{k \geq 0}$ converges to a solution of \eqref{intro:pb:vi}.
\end{theorem}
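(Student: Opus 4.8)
The plan is to combine a Lyapunov (energy) analysis of Algorithm~\ref{algo:split} with a discrete version of Opial's lemma, which in the finite-dimensional setting $\R^{d}$ reduces to two requirements for convergence of $(z_{k})_{k \geq 0}$ to some point of $\Omega$: (i) the limit $\lim_{k \to \infty} \norm{z_{k} - z^{\ast}}$ exists for every $z^{\ast} \in \Omega$, and (ii) every cluster point of $(z_{k})_{k \geq 0}$ lies in $\Omega$. First I would fix an arbitrary $z^{\ast} \in \Omega$ and, using monotonicity of $F$ together with the defining relations of the $w_{k}$, $z_{k+1}$ and $\zeta_{k+1}$ updates, derive an energy inequality for a suitably weighted Lyapunov sequence. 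Guided by the vanishing-damping second-order dynamics that motivates fOGDA, a natural candidate is of the form
\[
	\mathcal{E}_{k} = a_{k} \normsq{(z_{k} - z^{\ast}) + b_{k}(z_{k} - z_{k-1}) + c_{k}(F(w_{k-1}) + \zeta_{k})} + (\text{nonnegative remainder}),
\]
with coefficients $a_{k}, b_{k}, c_{k}$ scaling like powers of $k$ dictated by $\alpha$. The goal of this step is to show that $\mathcal{E}_{k}$ is nonincreasing up to summable errors, which immediately yields boundedness of $(z_{k})_{k \geq 0}$.

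Second, from the same energy estimate I would extract the summability conditions that govern the asymptotics, namely $\sum_{k \geq 1} k \normsq{z_{k+1} - z_{k}} < \infty$ and $\sum_{k \geq 1} k \normsq{F(w_{k}) + \zeta_{k+1}} < \infty$ (or the appropriate variants for the shifted iterates). These imply that the velocities $z_{k+1} - z_{k} \to 0$ and that the residual $\norm{F(z_{k}) + \zeta_{k}} \to 0$; in particular, from the $w_{k}$ update one also gets $w_{k} - z_{k} \to 0$, so $(z_{k})$ and $(w_{k})$ share the same cluster points. Since $(z_{k})$ is bounded it has at least one cluster point $\bar{z}$; passing to the limit along a convergent subsequence $z_{k_{j}} \to \bar{z}$ and using Lipschitz continuity of $F$ gives $F(z_{k_{j}}) \to F(\bar{z})$, whence $\zeta_{k_{j}} \to -F(\bar{z})$ because $F(z_{k_{j}}) + \zeta_{k_{j}} \to 0$. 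As $\zeta_{k_{j}} \in N_{C}(z_{k_{j}})$ and the graph of $N_{C}$ is closed, we obtain $-F(\bar{z}) \in N_{C}(\bar{z})$, i.e.\ $0 \in F(\bar{z}) + N_{C}(\bar{z})$ and thus $\bar{z} \in \Omega$, establishing requirement (ii).

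The hard part is requirement (i): because fOGDA-VI is inertial, the sequence $\norm{z_{k} - z^{\ast}}^{2}$ is in general \emph{not} monotone, and the momentum and normal-cone correction terms introduce cross terms that obstruct a direct Fej\'er argument. The plan is to derive, for $h_{k} := \tfrac{1}{2}\norm{z_{k} - z^{\ast}}^{2}$ and the increment $\delta_{k} := h_{k} - h_{k-1}$, a recursion of the form $h_{k+1} \leq (1 + \theta_{k}) h_{k} + \beta_{k} \delta_{k} - \mu_{k} + \varepsilon_{k}$ with $\sum_{k} \theta_{k} < \infty$, $\sum_{k} \varepsilon_{k} < \infty$ and $\mu_{k} \geq 0$, and then to invoke a standard lemma on quasi-monotone real sequences with summable perturbations (in the spirit of the Alvarez--Attouch and Attouch--Cabot results used for inertial dynamics) to conclude that $\lim_{k} h_{k}$, and hence $\lim_{k} \norm{z_{k} - z^{\ast}}$, exists. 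The delicate point is verifying that the coefficients produced by the $\tfrac{k}{k+\alpha}$ and $\tfrac{2k+\alpha}{k+\alpha}$ weightings, combined with the explicit formula for $\zeta_{k+1}$, actually yield summable error terms; this is precisely where the momentum condition $\alpha > 2$ and the step-size restriction $\gamma < \nicefrac{1}{4L}$ enter. With (i) and (ii) established, the discrete Opial lemma delivers convergence of $(z_{k})_{k \geq 0}$ to a single element of $\Omega$.
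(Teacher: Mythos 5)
Your overall architecture (energy estimates plus the finite-dimensional Opial lemma) matches the paper, and your treatment of Opial condition (ii) is essentially the paper's: boundedness of $(z_{k})$, $\|F(z_{k})+\zeta_{k}\|\to 0$, and closedness of the graph of the maximally monotone operator $F+N_{C}$ (Proposition~\ref{prop:seq-closed}) give that every cluster point lies in $\Omega$. The gap is in condition (i), which you correctly flag as the hard part but then defer to a recursion $h_{k+1}\leq(1+\theta_{k})h_{k}+\beta_{k}\delta_{k}-\mu_{k}+\varepsilon_{k}$ plus an Alvarez--Attouch/Attouch--Cabot type lemma. That route does not obviously close here: testing the consolidated update \eqref{split:d-u} against $z_{k+1}-z^{\ast}$ produces, besides the benign inertial term, the unsigned and $k$-weighted cross term $2\gamma k\langle z_{k+1}-z^{\ast},v_{k+1}-v_{k}\rangle$ coming from the correction/normal-cone part of the scheme; it is neither of the form $\beta_{k}\delta_{k}$ nor summable a priori, so the quasi-Fej\'er lemma you want to invoke has nothing to bite on. Summability of $\langle z_{i}-z^{\ast},v_{i}\rangle$ (which the paper does establish, see \eqref{split:lim:vi}) controls the \emph{unweighted} series but not the sequence $k\langle z_{k}-z^{\ast},v_{k}\rangle$ that this term generates after Abel summation.

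The paper's mechanism for (i) is genuinely different and is the missing idea. It first proves (Proposition~\ref{prop:split:lim}) that the energies $\cE_{\lambda,k}$ converge for \emph{every} $\lambda$ in an open interval $(\underline{\lambda}(\alpha),\overline{\lambda}(\alpha))$, then exploits the identity $\cE_{\lambda_{2},k}-\cE_{\lambda_{1},k}=4(\lambda_{2}-\lambda_{1})p_{k}$ with $p_{k}=\tfrac{\alpha-1}{2}\|z_{k}-z^{\ast}\|^{2}+k\langle z_{k}-z^{\ast},z_{k}-z_{k-1}+2\gamma v_{k}\rangle$ to conclude that $\lim_{k}p_{k}$ exists --- this is precisely how the troublesome $k$-weighted cross term gets tamed. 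It then introduces $q_{k}=\tfrac{1}{2}\|z_{k}-z^{\ast}\|^{2}+2\gamma\sum_{i=1}^{k}\langle z_{i}-z^{\ast},v_{i}\rangle$, shows $(\alpha-1)q_{k}+k(q_{k}-q_{k-1})$ converges, and applies the specialised Lemma~\ref{lem:lim-u-k} (not a quasi-Fej\'er lemma) to deduce that $\lim_{k}q_{k}$, and hence $\lim_{k}\|z_{k}-z^{\ast}\|$, exists. Without the two-parameter trick and Lemma~\ref{lem:lim-u-k} (or a worked-out substitute for them), your proof of condition (i) is incomplete. A secondary inaccuracy: the ``nonnegative remainder'' you posit in your Lyapunov candidate does not exist in the paper's construction --- $\cG_{\lambda,k}$ contains the sign-indefinite term $-\tfrac{2(\alpha-2)}{\alpha-1}\gamma k^{2}\langle z_{k}-z_{k-1},F(z_{k})-F(w_{k-1})\rangle$, and both its lower bound and its perturbed descent property require the separate arguments of Lemma~\ref{lem:reg} and Lemma~\ref{lem:trunc}.
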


The central idea for the proof is to define an appropriate family of energy functions $(\cG_{\lambda, k})_{k \geq 0}$, where $ \lambda \geq 0 $ is a parameter that depends on $ \alpha $, which dissipate over the course of the algorithm to obtain convergence or summability of various helpful quantities.
Even though we are not able to enforce the family of discrete energies $(\cG_{\lambda, k})_{k \geq 0}$ to have an actual nonincreasing property, we can at least show that for every $k \geq 0$
\begin{equation}\label{inq:G}
	\cG_{\lambda, k+1} \leq (1 + d_{\lambda, k}) \cG_{\lambda, k} - b_{\lambda, k},
\end{equation}
with some sequences $(b_{\lambda, k})_{k \geq 0}$ and $(d_{\lambda, k})_{k \geq 0}$. The aim is to control these two sequences in such a way that we can still derive some beneficial asymptotic results for $(\cG_{\lambda, k}) _{k \geq 0}$. As the additional terms are not necessarily nonnegative, a novel Lyapunov analysis is needed.
For instance, we show that there exist $0 \leq \underline{\lambda} \left( \alpha \right) < \overline{\lambda} \left( \alpha \right) \leq \nicefrac{(3 \alpha - 2)}{4}$ such that every $\underline{\lambda} \left( \alpha \right) < \lambda < \overline{\lambda} \left( \alpha \right)$ provides an energy function $(\cG_{\lambda, k}) _{k \geq 0}$ that is bounded from below and nonnegative sequences $(b_{\lambda, k})_{k \geq 0}$ and $(d_{\lambda, k})_{k \geq 0}$ with  $\sum_{k \geq 0} d_{\lambda, k} < +\infty$ such that the inequality \eqref{inq:G} holds for $k$ large enough. This allows us to conclude that $\lim_{k \to + \infty} \cG_{\lambda,k} \in \R$ exists, see Lemma~\ref{lem:quasi-Fej} for more details.

From this we can then verify that the first condition of Opial's lemma, see Lemma~\ref{lem:opial}, is fulfilled, while its second condition follows from the maximal monotonicity of $F + N_{C}$, see Proposition~\ref{prop:seq-closed}.

The asymptotic convergence of the iterates is complemented by statements about convergence rates in terms of the restricted gap as well as the natural residual for the last iterate.
\begin{theorem}\label{thm:gap-res}
	Let $z^{\ast} \in \Omega$ be a solution of~\eqref{intro:pb:vi} and let $(z_{k})_{k \geq 0}$ be the sequence generated by Algorithm~\ref{algo:split}. Then, as $ k \to + \infty $, we have
	\begin{equation}
		\Gap(z_{k}) = o \left( \frac{1}{k} \right)
		\quad \textrm{and} \quad
		\Res(z_{k}) = o \left( \frac{1}{k} \right).
	\end{equation}
\end{theorem}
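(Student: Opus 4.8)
The plan is to reduce both convergence measures to a single tangent-residual-type quantity and then to show that this quantity decays like $o(\nicefrac{1}{k})$ by exploiting the energy analysis. Set $v_{k} := F(w_{k}) + \zeta_{k+1}$. First I would record that the projection defining $z_{k+1}$ together with the update for $\zeta_{k+1}$ yields $\zeta_{k+1} \in N_{C}(z_{k+1})$ (this is precisely the normal-cone characterisation of $P_{C}$ that produced Algorithm~\ref{algo:split}). Consequently $r(z_{k+1}) \leq \norm{F(z_{k+1}) + \zeta_{k+1}} \leq \norm{v_{k}} + L \norm{z_{k+1} - w_{k}}$ by Lipschitz continuity, and since the natural residual is dominated by the tangent residual (Section~\ref{app:prelim}), $\Res(z_{k+1})$ obeys the same bound. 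For the gap, I would use monotonicity: for any $w \in C \cap \mathbb{B}(z^{\ast}; \delta(z_{0}))$ one has $\sprod{F(w)}{z_{k+1} - w} \leq \sprod{F(z_{k+1})}{z_{k+1} - w} \leq \sprod{F(z_{k+1}) + \zeta_{k+1}}{z_{k+1} - w}$, where the last step uses $\zeta_{k+1} \in N_{C}(z_{k+1})$ and $w \in C$; Cauchy--Schwarz then gives $\Gap(z_{k+1}) \leq \bigl( \norm{v_{k}} + L \norm{z_{k+1} - w_{k}} \bigr) \sup_{w} \norm{z_{k+1} - w}$. The supremum is bounded because $(z_{k})_{k \geq 0}$ converges by Theorem~\ref{thm:conv} and $w$ ranges over a fixed ball. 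Hence everything reduces to proving $\norm{v_{k}} = o(\nicefrac{1}{k})$ together with $\norm{z_{k+1} - w_{k}} = o(\nicefrac{1}{k})$.

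The heart of the argument is to upgrade the $\mathcal{O}(\nicefrac{1}{k})$ information contained in the bounded energy to the sharp $o(\nicefrac{1}{k})$ rate. From Lemma~\ref{lem:quasi-Fej} I have, for admissible $\lambda$, that $\lim_{k \to +\infty} \cG_{\lambda, k}$ exists in $\R$ and that the dissipation terms are summable. Inspecting the construction of $\cG_{\lambda, k}$, its $\lambda$-dependence is affine, $\cG_{\lambda, k} = \mathcal{A}_{k} + \lambda \mathcal{B}_{k}$, with $\mathcal{B}_{k}$ a positive multiple of $k^{2} \normsq{v_{k}}$ (up to comparable squared-distance terms). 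Evaluating the known limit at two distinct values $\lambda_{1} \neq \lambda_{2}$ in $(\underline{\lambda}(\alpha), \overline{\lambda}(\alpha))$ and subtracting shows that $\lim_{k \to +\infty} \mathcal{B}_{k}$ exists; in particular $\ell := \lim_{k \to +\infty} k^{2} \normsq{v_{k}}$ exists and is finite, which already gives the $\mathcal{O}(\nicefrac{1}{k})$ bound. To see that $\ell = 0$ I would use the summable dissipation: the terms $b_{\lambda, k}$ control a positive multiple of $k \normsq{v_{k}}$, so $\sum_{k} k \normsq{v_{k}} < +\infty$. Were $\ell > 0$, then $k \normsq{v_{k}} \sim \ell / k$, whose series diverges, a contradiction. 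Hence $k^{2} \normsq{v_{k}} \to 0$, i.e.\ $\norm{v_{k}} = o(\nicefrac{1}{k})$, and the same bookkeeping yields $\norm{z_{k+1} - w_{k}} = o(\nicefrac{1}{k})$. Substituting these into the two reductions above concludes the proof.

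The main obstacle is the precise bookkeeping in this second step: one must verify that $\cG_{\lambda, k}$ really is affine in $\lambda$ with $\lambda$-coefficient comparable to $k^{2} \normsq{v_{k}}$, and simultaneously that the summable part $b_{\lambda, k}$ dominates a positive multiple of $k \normsq{v_{k}}$ --- both only for $k$ large enough, since the quasi-Fej\'er inequality \eqref{inq:G} holds only eventually and the auxiliary sequences $(d_{\lambda, k})_{k \geq 0}$ are merely summable rather than sign-definite. Establishing these two comparabilities at once, while keeping the mismatch between the point $w_{k}$ at which $F$ is evaluated and the point $z_{k+1}$ at which $\zeta_{k+1}$ lives under control via the Lipschitz estimate, is where the analysis is most delicate. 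The remaining manipulations --- monotonicity for the gap, the inclusion $\zeta_{k+1} \in N_{C}(z_{k+1})$, and the bound $\Res \leq r$ --- are routine once $\norm{v_{k}} = o(\nicefrac{1}{k})$ is in hand.
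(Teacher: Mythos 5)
Your overall architecture matches the paper's: reduce both $\Gap$ and $\Res$ to the quantity $\norm{F(z_{k})+\zeta_{k}}$ (via monotonicity, $\zeta_{k}\in N_{C}(z_{k})$, the bound $\Res\leq r$, and the Lipschitz estimate \eqref{split:Lip}), and then upgrade the $\mathcal{O}(\nicefrac{1}{k})$ bound on $\norm{v_{k}}$ to $o(\nicefrac{1}{k})$ by combining convergence of the energy with summability of $k\normsq{v_{k}}$. The reductions in your first paragraph are sound (your way of bounding the gap, via $\norm{F(z_{k})+\zeta_{k}}\cdot\sup_{w}\norm{z_{k}-w}$ and boundedness of the iterates, is a harmless variant of the paper's splitting through $z^{\ast}$), and the summability $\sum_{k}k\normsq{v_{k}}<+\infty$ that you extract from $b_{\lambda,k}$ is exactly \eqref{split:lim:V}.

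The genuine gap is in how you establish that $\lim_{k\to+\infty}k^{2}\normsq{v_{k}}$ exists. You claim that $\cG_{\lambda,k}=\mathcal{A}_{k}+\lambda\mathcal{B}_{k}$ with $\mathcal{B}_{k}$ comparable to $k^{2}\normsq{v_{k}}$, so that evaluating at two values of $\lambda$ and subtracting isolates this quantity. The affineness in $\lambda$ is correct (the $\lambda^{2}$ contributions from $\frac12\normsq{u_{\lambda,k}}$ and from $2\lambda(\alpha-1-\lambda)\normsq{z_{k}-z^{\ast}}$ cancel, cf.\ \eqref{defi:E-lambda}), but the $\lambda$-coefficient is $4p_{k}=2(\alpha-1)\normsq{z_{k}-z^{\ast}}+4k\sprod{z_{k}-z^{\ast}}{z_{k}-z_{k-1}+2\gamma v_{k}}$, which contains $v_{k}$ only linearly and contains no $k^{2}\normsq{v_{k}}$ term at all; that term sits entirely in the $\lambda$-\emph{independent} part $\mathcal{A}_{k}$. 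Hence the two-$\lambda$ subtraction gives you $\lim_{k}p_{k}\in\R$ (which is what the paper extracts in the proof of Theorem~\ref{thm:conv}), not $\lim_{k}k^{2}\normsq{v_{k}}$, and your contradiction argument has nothing to apply to. The step is repairable, and this is precisely what Proposition~\ref{thm:o-rates} does: having $\lim_{k}\cE_{\lambda,k}$ and $\lim_{k}p_{k}$, one deduces that $\mathcal{A}_{k}=\cE_{\lambda,k}-4\lambda p_{k}$ converges; after removing the term $\frac{(\alpha-2)\alpha}{\alpha-1}\gamma^{2}k\normsq{v_{k}}$, which tends to $0$ by the already available $\mathcal{O}(\nicefrac{1}{k})$ bound on $\norm{v_{k}}$, what remains is the nonnegative quantity $h_{k}=\frac{k^{2}}{2}\bigl(\normsq{2(z_{k}-z_{k-1})+\frac{3\alpha-2}{\alpha-1}\gamma v_{k}}+\frac{(\alpha-2)(3\alpha-2)}{(\alpha-1)^{2}}\gamma^{2}\normsq{v_{k}}\bigr)$, whose limit exists and is forced to be $0$ by $\sum_{k}\frac{1}{k}h_{k}<+\infty$. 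This simultaneously yields $k\norm{v_{k}}\to 0$ and $k\norm{z_{k}-z_{k-1}}\to 0$, which is also the missing justification for your claim that $\norm{z_{k+1}-w_{k}}=o(\nicefrac{1}{k})$ (via $\norm{z_{k+1}-w_{k}}\leq 2\gamma\norm{v_{k+1}-v_{k}}$ and \eqref{split:lim:dV}). As written, however, your sketch of the central step would not go through.
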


\begin{remark}
	The tangent residual exhibits the same last iterate convergence rate as the natural residual, i.e.,
	\begin{equation}
		r(z_{k}) = o \left( \frac{1}{k} \right).
	\end{equation}
	In fact, we use the observation $ \Res(z) \leq \norm{F(z) + \zeta} $, see~\eqref{eq:gaps}, in the proof of Theorem~\ref{thm:gap-res} to obtain the convergence rate for the natural residual. As the restricted gap and the natural residual are mostly used in the literature (and are probably more intuitive) to quantify the convergence behaviour of numerical methods for variational inequalities, we opted to present Theorem~\ref{thm:gap-res} in the above manner.
\end{remark}

\section{Numerical Experiments}\label{sec:num}
In this section we provide two numerical experiments to complement our theoretical results. For the first one we treat a two-player zero sum game, which amounts to solving a bilinear saddle point problem constrained by standard simplexes. The second one consists of application of fOGDA-VI to the training of GANs.

\subsection{Two-player Zero Sum Game}\label{sub:bilin}

\begin{figure}[ht]
	\vskip 0.2in
	\begin{center}
		\centerline{\includegraphics[width=\columnwidth]{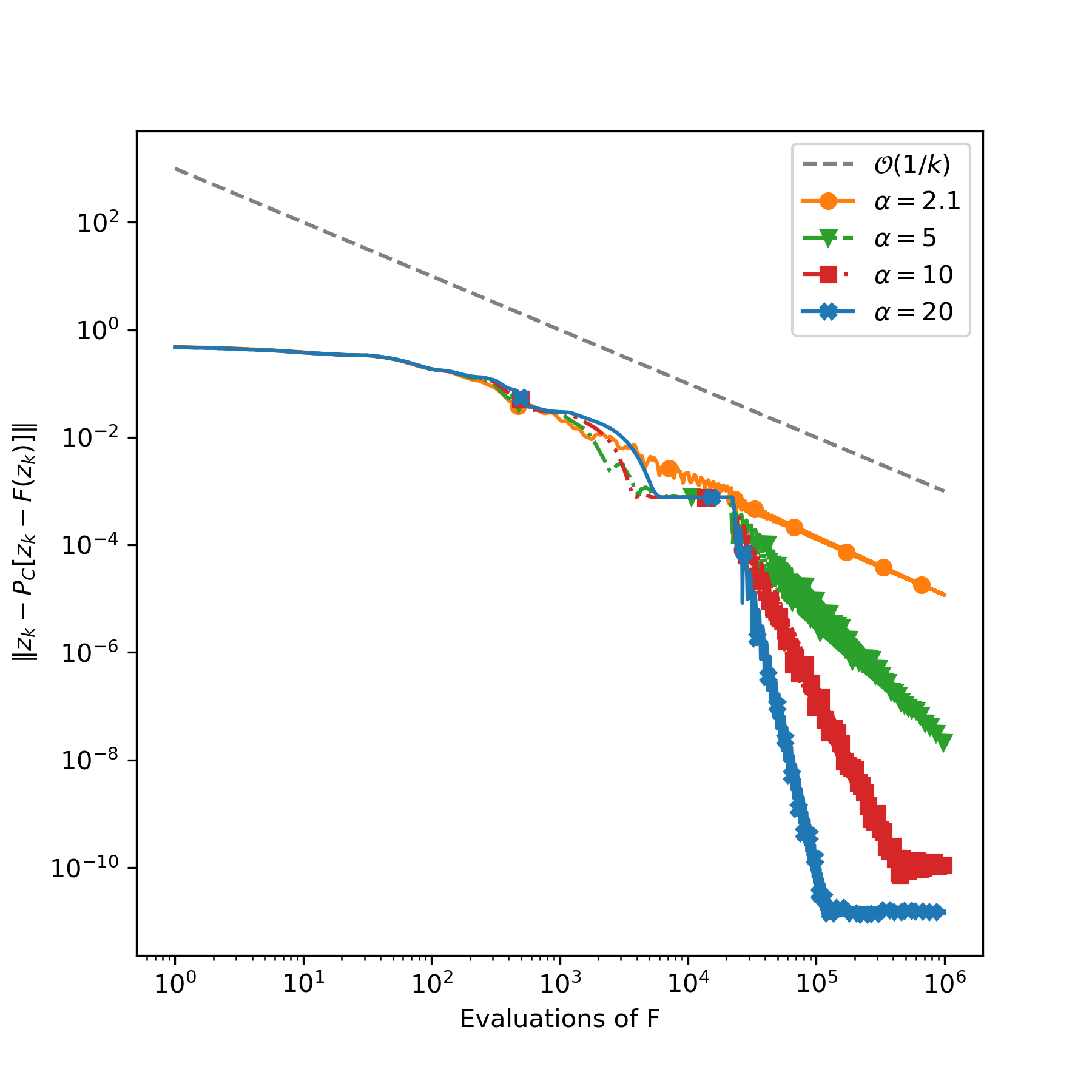}}
		\caption{Comparison of different momentum parameters $ \alpha > 2 $ in Algorithm~\ref{algo:split} in terms of the natural residual.}
		\label{fig:fogda-alpha}
	\end{center}
	\vskip -0.2in
\end{figure}

We aim to solve a two-player zero sum game with mixed strategies, which means that we need to solve the following bilinear saddle point problem,
\begin{equation}\label{eq:zero-sum}
	\min_{x \in \Delta^{m}} \, \max_{y \in \Delta^{n}} \; \Phi(x, y) := x^{T} A y,
\end{equation}
where $ A \in \R^{m \times n} $ is a given pay-off matrix and $ \Delta^{d} = \{ v \in \R^{d}_{+} \mid \sum_{i=1}^{d} v_{i} = 1 \} $ denotes the $ d $-dimensional standard simplex. Recall that a solution of~\eqref{eq:zero-sum} is given by a saddle point $ (x^{\ast}, y^{\ast}) \in \Delta^{m} \times \Delta^{n} $ satisfying
\begin{equation}
	\Phi(x^{\ast}, y) \leq \Phi(x^{\ast}, y^{\ast}) \leq \Phi(x, y^{\ast})
	\quad \forall (x,y) \in \Delta^{m} \times \Delta^{n}.
\end{equation}
This leads to a monotone inclusion problem~\eqref{intro:pb:mi} with $ C = \Delta^{m} \times \Delta^{n} $ and
\begin{equation}
	F: \R^{m} \times \R^{n} \to \R^{m} \times \R^{n},
	\hspace{0.5em}
	\begin{pmatrix}
		x\\
		y
	\end{pmatrix}
	\mapsto
	\begin{pmatrix}
		\nx \Phi (x, y)\\
		\minus \ny \Phi (x, y)\\
	\end{pmatrix},
\end{equation}
which gives
\begin{equation}
	F(x,y)
	=
	\begin{pmatrix}
		Ay\\
		\minus A^{T}x
	\end{pmatrix}
	=
	\begin{pmatrix}
		0 & A\\
		-A^{T} & 0
	\end{pmatrix}
	\begin{pmatrix}
		x\\
		y
	\end{pmatrix}.
\end{equation}

\begin{figure}[ht]
	\vskip 0.2in
	\begin{center}
		\centerline{\includegraphics[width=\columnwidth]{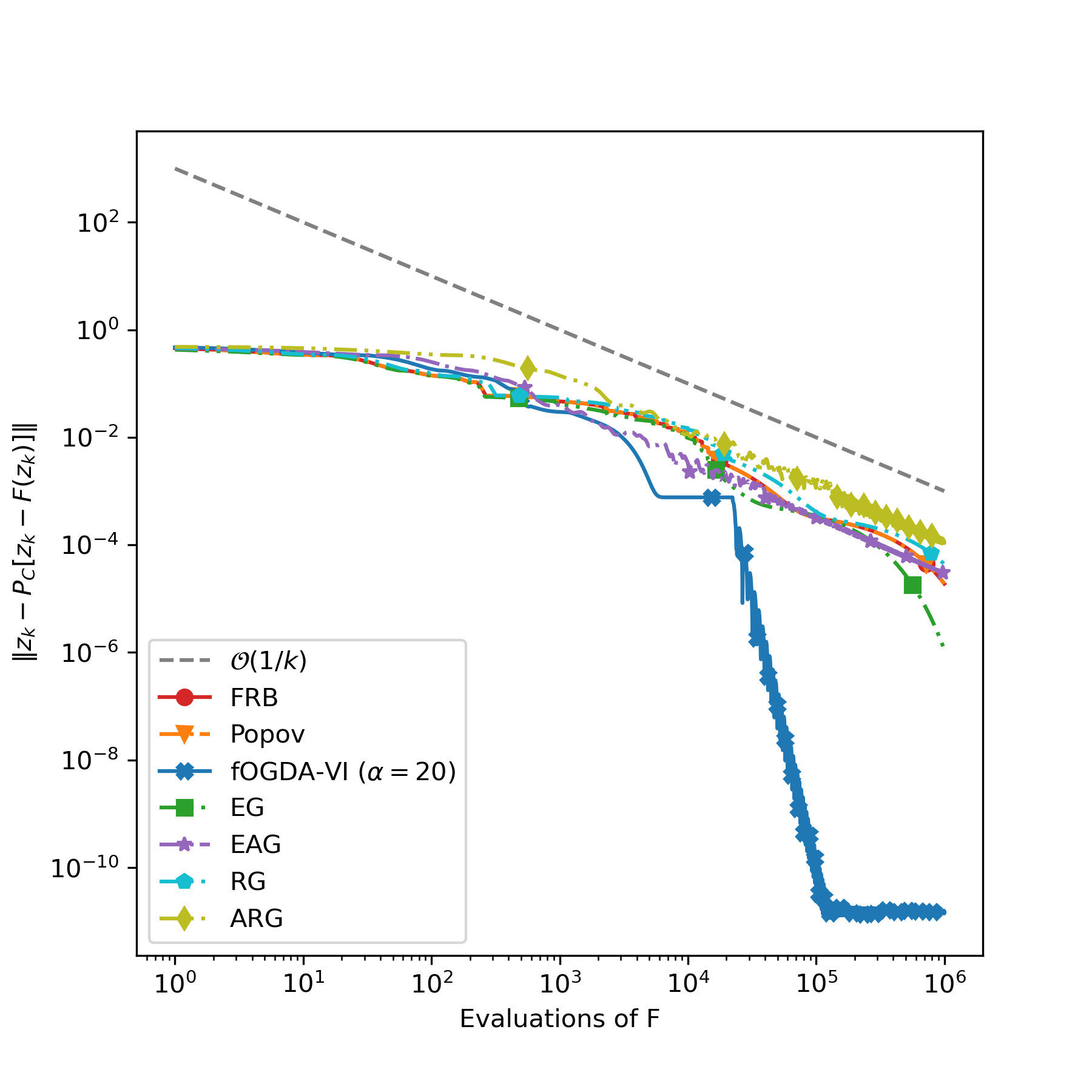}}
		\caption{Comparison of different methods in terms of the natural residual.}
		\label{fig:fogda-res}
	\end{center}
	\vskip -0.2in
\end{figure}

Notice that $ F $ is Lipschitz continuous but not cocoercive, thus indeed the regular \emph{Projected Gradient Descent Ascent} algorithm (PGDA), which is in this case nothing else than the FB algorithm, cannot be applied.

\begin{table*}[t]
	\caption{Comparison of fOGDA-VI with LA-GDA in terms of Fr\'{e}chet Inception Distance (FID; lower is better) and Inception Score (IS; higher is better). We report the best obtained scores, averaged over 5 runs with 500,000 iterations each. For all considered methods we evaluated the last (non averaged) iterates, the uniform average, the exponential moving average (EMA) and the EMA on the ``slow weights'' for the method incorporating ``lookahead''. Best scores for each metric are in boldface.}
	\label{tab:all-scores}
	\vskip 0.15in
	\begin{center}
		\begin{small}
			\begin{adjustbox}{max width=\textwidth}
				\begin{tabular}{lcccccccc}
					\toprule
					& \multicolumn{4}{c}{FID} & \multicolumn{4}{c}{IS} \\
					\cmidrule(lr){2-5} \cmidrule(lr){6-9}
					Method & non avg. & uniform avg. & EMA & EMA-slow & non avg. & uniform avg. & EMA & EMA-slow \\
					\midrule
					fOGDA & 18.49 $\pm$ 1.09 & 17.38 $\pm$ 1.69 & 18.51 $\pm$ 1.13 & -- & 7.82 $\pm$ .07 & 8.7 $\pm$ .15 & 8.1 $\pm$ .15 & -- \\
					LA-GDA & 16.7 $\pm$ .67 & 16.02 $\pm$ .84 & 16.84 $\pm$ .71 & \textbf{15.31} $\pm$ 1.27 & 7.88 $\pm$ .08 & \textbf{8.76} $\pm$ .19 & 8.29 $\pm$ .07 & 8.59 $\pm$ .1 \\
					\bottomrule
				\end{tabular}
			\end{adjustbox}
		\end{small}
	\end{center}
	\vskip -0.1in
\end{table*}

For our experiments we choose $ d = n = 50 $ and $ A $ to have entries drawn from the uniform distribution on the half-open interval $ \left[ 0, 1 \right) $.
As the parameter $ \alpha > 2 $ can be chosen arbitrarily, we do a comparison of different values in terms of the natural residual in Figure~\ref{fig:fogda-alpha} to gain more insight. Note that there is no upper bound for $ \alpha $ that would be given on the basis of the theoretical considerations. As it turns out, from a certain point on after a period where all values of $ \alpha $ perform similarly, bigger choices for $ \alpha $ seem to give better results with faster convergence (even though the convergence rate of $ o(\nicefrac{1}{k}) $ is the same for all choices). When going to ``extremely big'' choices of $ \alpha $ we could not only observe further boost in convergence speed, but also increased oscillatory behaviour after a certain point. Whether fOGDA-VI for $ \alpha \to +\infty $ amounts to a convergent method is not obvious; for the unconstrained fOGDA by \cite{bot2022fastOGDA} one can see that this would lead to the unaccelerated OGDA method.

Concluding, we show a comparison of different methods that we have encountered in Sections~\ref{sub:nonaccmeth} and~\ref{sub:accmeth} in Figure~\ref{fig:fogda-res} where we report results on the natural residual. We see that fOGDA-VI clearly outperforms all other methods while only requiring one evaluation of $ F $ and one projection in each iteration.

\subsection{GAN Training}\label{sub:fOGDA-GAN}
Generative Adversarial Networks (GANs)~\cite{GAN} form a powerful class of generative models that can produce for example unseen realistic images. Originally the problem was posed as a zero sum game between two adversarial players played by two neural networks, called \emph{generator} and \emph{discriminator}, that try to minimise and maximise the same loss function, respectively.
The minimax structure of the underlying optimisation problem generally leads to cycling behaviour during the training process, making GANs notoriously hard to optimise~\cite{numerics-of-gans,mesched-GAN-converge}.
As it was shown empirically that principled methods that are used to solve variational inequalities~\cite{VIP-GAN} and monotone inclusions~\cite{tseng-minimax} can prove beneficial in the training process, we will apply our proposed algorithm fOGDA-VI to train ResNet architectures on the CIFAR-10 dataset.

For our experiments we use ResNet~\cite{resnet} architectures, see Appendix~\ref{app:resnet}, with the hinge version of the adversarial non-saturating loss~\cite{SN-GAN} trained on the CIFAR-10~\cite{CIFAR10} data set, which consists of 60,000 $ (32 \times 32 \times 3) $-images in 10 classes, with 6,000 images per class. The metrics used to evaluate the generated images are the inception score~\cite{is-original} (IS; higher is better) and the Fr{\' e}chet inception distance~\cite{FID} (FID; lower is better), both computed on 50,000 samples in their original implementations.

Furthermore, in our experiments instead of stochastic gradients we use the Adam optimiser~\cite{adam} with parameters $ \beta_{1} = 0 $ and $ \beta_{2} = 0.9 $ that were used in recent experiments~\cite{lookahead-minmax} outperforming the class-dependent BigGAN~\cite{BigGAN} model on CIFAR-10. Additionally, we keep the batch size and the ratio of discriminator and generator updates the same as in~\cite{lookahead-minmax}.

Since we have mini batch updates for the GAN experiments instead of taking the full gradient we perform significantly more steps to incorporate the entire gradient information. Because of this the iterator $ k $ in Algorithm~\ref{algo:split} might grow too large soon, so we conducted experiments to update the iterator $ k $ only every $ n $-th step with different choices of $ n $. Furthermore, we also did a hyperparameter search regarding the learning rate and the momentum parameter $ \alpha > 2 $ in Algorithm~\ref{algo:split}.

\begin{table}[hb]
	\caption{Overall best obtained scores in terms of Fr\'{e}chet Inception Distance (FID; lower is better) and Inception Score (IS; higher is better) for the last (non averaged) iterates. Best scores for each metric are in boldface.}
	\label{tab:best-scores}
	\vskip 0.15in
	\begin{center}
		\begin{small}
			\begin{adjustbox}{max width=\textwidth}
				\begin{tabular}{lcc}
					\toprule
					Method & FID & IS \\
					\midrule
					fOGDA-VI & 15.69 & 8.91 \\
					LA-GDA & \textbf{14.09} & \textbf{9.06} \\
					\bottomrule
				\end{tabular}
			\end{adjustbox}
		\end{small}
	\end{center}
	\vskip -0.1in
\end{table}

\begin{figure*}[bt]%
	\vskip 0.2in
	\begin{center}
		\subfigure[FID.]{\includegraphics[width=\columnwidth]{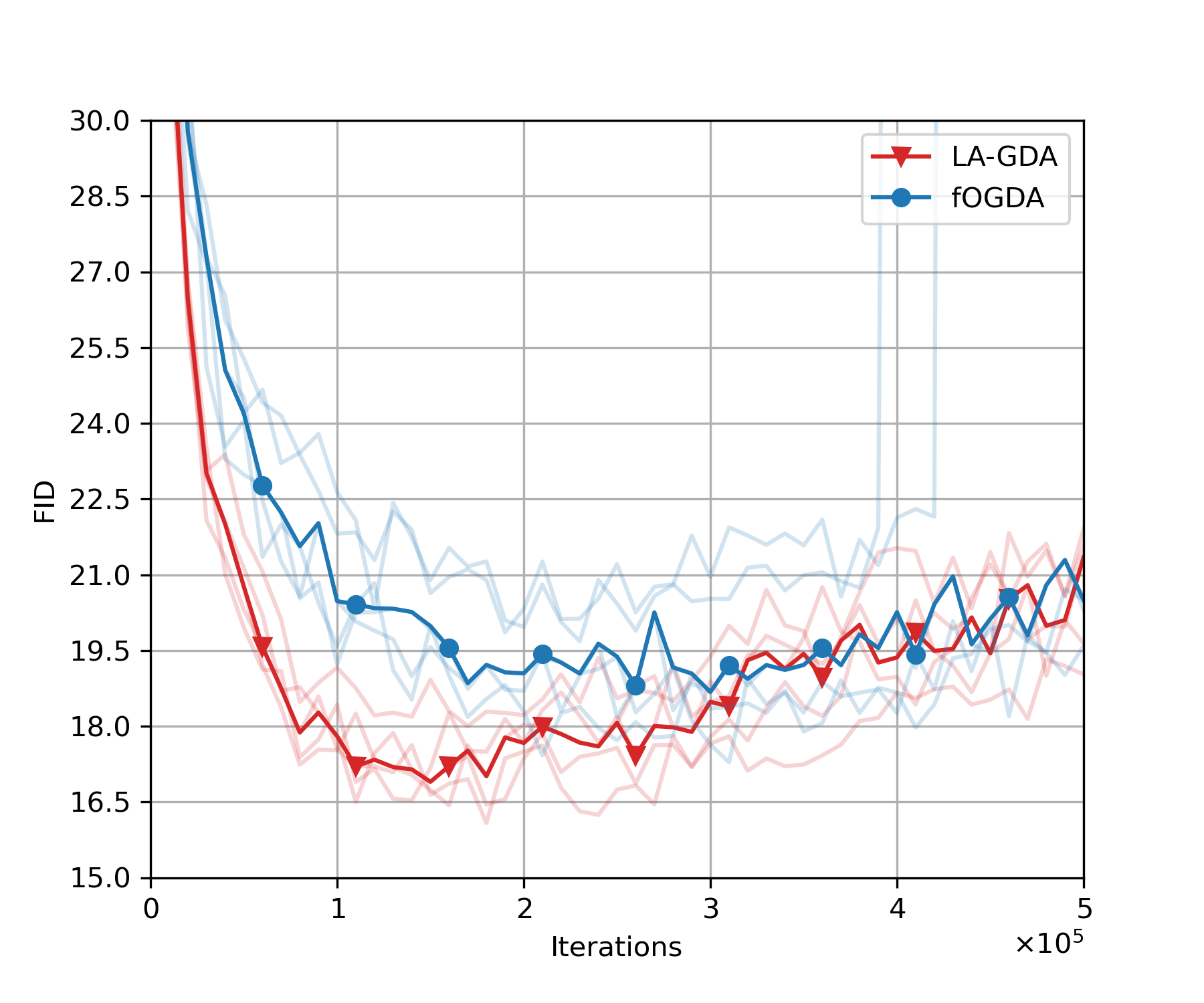}}
		\subfigure[IS.]{\includegraphics[width=\columnwidth]{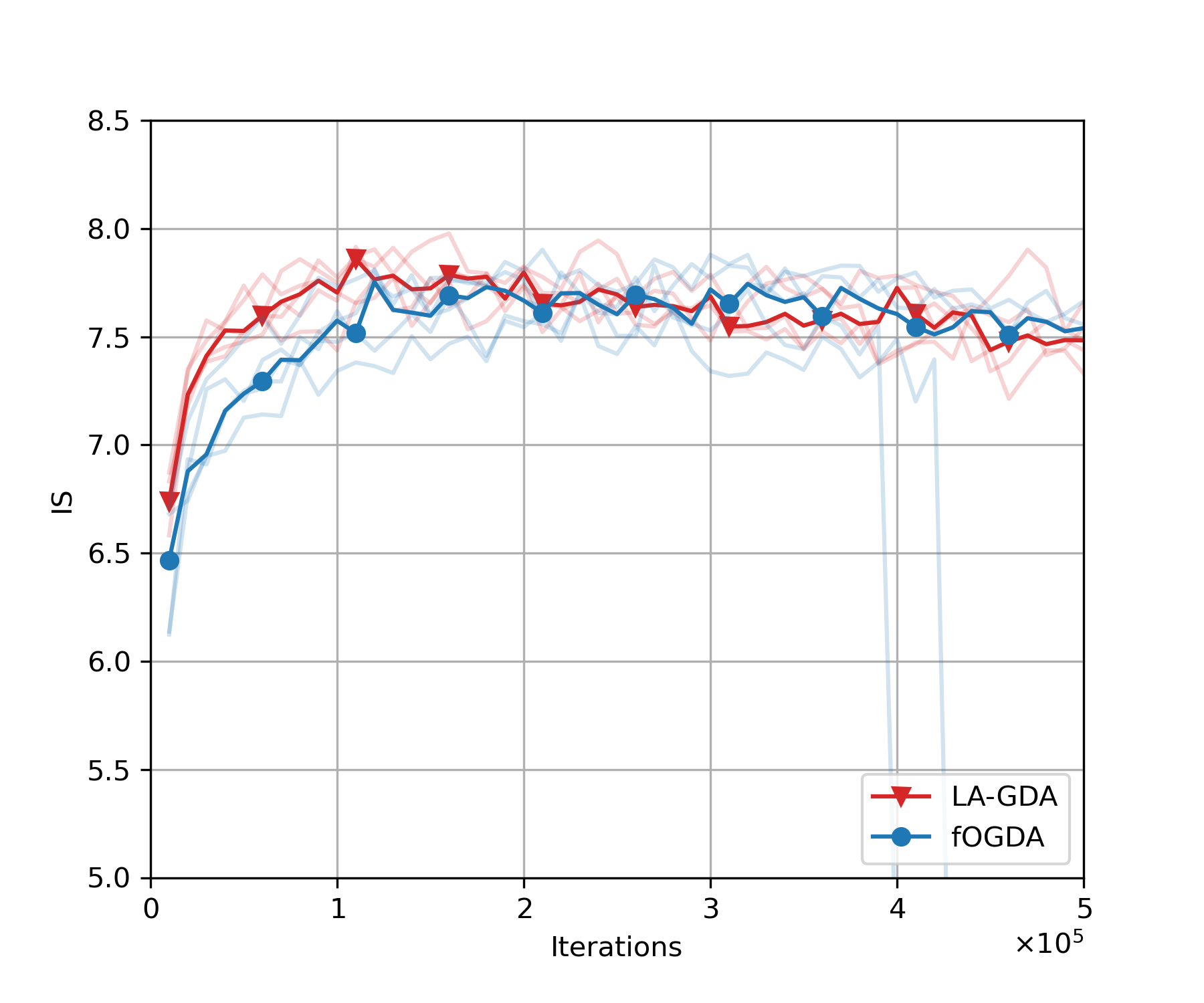}}
		\caption{The median and the individual runs are illustrated with ticker solid lines and transparent lines, respectively. We report (a) FID and (b) IS for the last (non averaged) iterates.}
		\label{fig:all-runs}
	\end{center}
	\vskip -0.2in
\end{figure*}

When performing the hyperparameter search for the momentum parameter $ \alpha $, we experienced that, just as in the theoretically justified setting of Section~\ref{sub:bilin}, bigger values seemed to perform better. Regarding the frequency of iterator updates we also observed better behaviour for bigger values of $ n $ in general. The parameters we used for the fOGDA-VI experiments were $ \alpha = 100 $ and $ n = 1000 $, and a learning rate of $ \gamma = 0.0001 $.
We compared the results obtained by fOGDA-VI with the best method from~\cite{lookahead-minmax}, a variant of Gradient Descent Ascent incorporating averaging during the training which they call ``lookahead'', resulting in a method we denote by LA-GDA, for the convergence properties of which no theoretical evidence is available. For the LA-GDA experiments we kept all hyperparameters as reported by Chavdarova et al.~\yrcite{lookahead-minmax}.  For both methods we conducted 5 runs with 500,000 iterations each.

To further reduce the cycling characteristics of GAN training two commonly used techniques in practice are the (uniform) averaging and the exponential moving averaging (EMA) of the network weights. The beneficial effects of uniform and exponential averaging can also be observed in our experiments (see Table~\ref{tab:all-scores}), however uniform averaging seems to have a stronger impact on the scores.

In Table~\ref{tab:all-scores} we report a comparison of fOGDA-VI with LA-GDA in terms of Fr\'{e}chet Inception Distance (FID; lower is better)~\cite{FID} and Inception Score (IS; higher is better)~\cite{is-original}. We report the best obtained scores, averaged over 5 runs with 500,000 iterations each. For both methods we evaluated the last (non averaged) iterates, the uniform average, the exponential moving average (EMA) and the EMA on the ``slow weights'' for LA-GDA. One can see that while both considered methods give comparable results and show similar behaviour, the best score for both FID and IS is obtained by LA-GDA. An interesting observation is that the FID scores obtained by LA-GDA seem to be significantly worse than those reported in~\cite{lookahead-minmax}, while it exhibits higher reported IS values.

The scores reported in Table~\ref{tab:best-scores}, where we list the overall best obtained values for both metrics, support the observations from Table~\ref{tab:all-scores}. In general, the results for fOGDA-VI and LA-GDA are on a similar level with the latter giving the altogether best scores.

Figure~\ref{fig:all-runs} shows all five individual runs and the respective median for both methods in terms of (a) FID and (b) IS. It can be observed that LA-GDA achieves better results than fOGDA-VI during the first 200,000 iterations, while from then on the both methods achieve similar scores. As it appears, the medians of fOGDA-VI seem to stay more consistently on the level of the optimal scores while LA-GDA worsens again over time.

\section{Conclusion}
In this work we proposed a novel algorithm, called fOGDA-VI, to solve monotone variational inequalities that recovers the explicit fOGDA method from~\cite{bot2022fastOGDA} in the unconstrained case. We showed that fOGDA-VI exhibits a better rate of convergence than other accelerated methods, giving a rate of convergence like $ o(\nicefrac{1}{k}) $ in terms of the restricted gap function and the tangent and natural residuals, while still maintaining convergence of the iterates to a solution of the variational inequality under investigation.
To validate our method in practice we treated a constrained bilinear minimax problem for which we obtained superior behaviour on this theoretically justified task. Moreover, application of fOGDA-VI to the training of GANs gives promising results even in practical settings that do not warrant the required assumptions.

\section*{Acknowledgements}
The authors would like to thank the anonymous reviewers and the program chairs for their valuable suggestions and comments that have improved the quality of the paper.
Michael Sedlmayer would like to acknowledge support from the Austrian Research Promotion Agency (FFG), project ``Smart operation of wind turbines under icing conditions (SOWINDIC)''.
Dang-Khoa Nguyen would like to acknowledge support from the Austrian Science Fund (FWF), project P~34922.
Radu Ioan Bo{\c t} would like to acknowledge partial support from the Austrian Science Fund (FWF), projects W~1260 and P~34922.

\bibliography{fOGDA-VI}
\bibliographystyle{icml2023}

%%%%%%%%%%%%%%%%%%%%%%%%%%%%%%%%%%%%%%%%%%%%%%%%%%%%%%%%%%%%%%%%%%%%%%%%%%%%%%%
%%%%%%%%%%%%%%%%%%%%%%%%%%%%%%%%%%%%%%%%%%%%%%%%%%%%%%%%%%%%%%%%%%%%%%%%%%%%%%%
% APPENDIX
%%%%%%%%%%%%%%%%%%%%%%%%%%%%%%%%%%%%%%%%%%%%%%%%%%%%%%%%%%%%%%%%%%%%%%%%%%%%%%%
%%%%%%%%%%%%%%%%%%%%%%%%%%%%%%%%%%%%%%%%%%%%%%%%%%%%%%%%%%%%%%%%%%%%%%%%%%%%%%%
\newpage
\appendix
\onecolumn

\section{Auxiliary Results}
In the following we present auxiliary results that will be necessary for the convergence analysis in Appendix~\ref{app:proofs}.

We start this section with a result which characterises the convergence of sequences.
\begin{lemma}[{see Lemma~32 in \cite{bot2022fastOGDA}}]\label{lem:lim-u-k}
	Let $a \geq 1$ and $\left( q_{k} \right)_{k \geq 1}$ be a bounded sequence in $\R^{d}$ such that
	\begin{equation}
	\lim_{k \to + \infty} \left( q_{k+1} + \frac{k}{a} \left( q_{k+1} - q_{k} \right) \right) = p \in \R^{d} .
	\end{equation}
	Then $\lim_{k \to + \infty} q_{k} = p$.
\end{lemma}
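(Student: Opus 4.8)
The plan is to recognise that the hypothesis encodes a Cesàro-type weighted-averaging recursion and to turn it into a telescoping sum. First I would set $u_{k} := q_{k+1} + \frac{k}{a}(q_{k+1} - q_{k})$, so that by assumption $u_{k} \to p$. Multiplying out and solving for $q_{k+1}$ gives the convex-combination recursion
\begin{equation}
q_{k+1} = \frac{k}{a+k}\, q_{k} + \frac{a}{a+k}\, u_{k}, \qquad k \geq 1,
\end{equation}
in which the two coefficients are nonnegative and sum to one. Constant sequences are fixed points of this recursion, which is the structural reason the limit is inherited from $(u_{k})$.

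Next I would introduce an integrating factor. Define $(\mu_{k})_{k \geq 1}$ by $\mu_{1} := 1$ and $\mu_{k+1} := \frac{a+k}{k}\mu_{k}$, so that $\mu_{k} = \Gamma(a+k)/(\Gamma(a+1)\Gamma(k)) \sim C k^{a} \to +\infty$ since $a \geq 1$. The point of this choice is that it turns the recursion into a telescoping identity: multiplying by $\mu_{k+1}$ and using $\mu_{k+1}\frac{k}{a+k} = \mu_{k}$ together with $\mu_{k+1}\frac{a}{a+k} = \mu_{k+1} - \mu_{k}$ yields
\begin{equation}
\mu_{k+1} q_{k+1} - \mu_{k} q_{k} = (\mu_{k+1} - \mu_{k})\, u_{k} .
\end{equation}
Summing from $1$ to $N-1$ and dividing by $\mu_{N}$ then expresses $q_{N}$ as
\begin{equation}
q_{N} = \frac{\mu_{1}}{\mu_{N}}\, q_{1} + \frac{1}{\mu_{N}}\sum_{k=1}^{N-1}(\mu_{k+1} - \mu_{k})\, u_{k},
\end{equation}
i.e. as a vanishing boundary term plus a weighted average of $u_{1}, \dots, u_{N-1}$ whose weights telescope, $\sum_{k=1}^{N-1}(\mu_{k+1}-\mu_{k})/\mu_{N} = 1 - \mu_{1}/\mu_{N} \to 1$.

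Finally I would run the standard Toeplitz/Silverman argument on this representation. Writing $q_{N} - p = \frac{\mu_{1}}{\mu_{N}}(q_{1} - p) + \frac{1}{\mu_{N}}\sum_{k=1}^{N-1}(\mu_{k+1}-\mu_{k})(u_{k} - p)$, I fix $\varepsilon > 0$ and choose $K$ with $\norm{u_{k} - p} < \varepsilon$ for $k \geq K$. The tail of the sum is bounded by $\varepsilon (\mu_{N} - \mu_{K})/\mu_{N} \leq \varepsilon$; the boundary term and the finite head sum over $k < K$ have fixed numerators and denominator $\mu_{N} \to +\infty$, hence vanish as $N \to \infty$. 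This gives $\limsup_{N} \norm{q_{N} - p} \leq \varepsilon$ for every $\varepsilon > 0$, so $q_{N} \to p$.

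The main obstacle — and really the only nontrivial idea — is spotting the integrating factor $\mu_{k}$ (equivalently, recognising the convex-combination/weighted-averaging structure) that linearises the recursion into a telescoping sum; once that is in place the convergence is a routine Cesàro estimate. I would note in passing that this argument uses only that $(u_{k})$ converges (hence is bounded), and not the assumed boundedness of $(q_{k})$, which is therefore a convenient but inessential hypothesis here.
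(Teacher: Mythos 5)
Your proof is correct. Note that the paper does not prove this lemma itself --- it is quoted verbatim from Lemma~32 of the cited fOGDA paper --- so there is no in-text argument to compare against; your integrating-factor/telescoping derivation (with $\mu_{k+1}=\frac{a+k}{k}\mu_k$, so that $\mu_k\sim Ck^{a}\to+\infty$ and the weights $\mu_{k+1}-\mu_k$ are nonnegative) followed by the Toeplitz averaging estimate is the standard route and every step checks out. Your closing observation is also accurate: the argument only uses convergence (hence boundedness) of $u_k:=q_{k+1}+\frac{k}{a}(q_{k+1}-q_k)$, so the stated boundedness of $(q_k)_{k\geq 1}$ is not actually needed.
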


The following result is a particular instance of Lemma~5.31 in \cite{BauschkeCombettes2}.
\begin{lemma}\label{lem:quasi-Fej}
	Let $ (a_{k})_{k \geq 1}$, $ (b_{k})_{k \geq 1}$, $ (d_{k})_{k \geq 1} $ and $ (d_{k})_{k \geq 1} $ be sequences of real numbers. Assume that $ (a_{k})_{k \geq 1} $ is bounded from below, $ (b_{k})_{k \geq 1} $ and $ (d_{k})_{k \geq 1} $ are nonnegative sequences such that $ \sum_{k \geq 1} d_{k} < + \infty$. If
	\begin{equation}\label{quasi-Fej:inq}
	a_{k+1} \leq \left( 1 + d_{k} \right) a_{k} - b_{k}
	\quad \forall k \geq 1,
	\end{equation}
	then the following statements are true:
	\begin{enumerate}[(i)]
		\item the sequence $ (b_{k})_{k \geq 1} $ is summable, i.e., $ \sum_{k \geq 1} b_{k} < +\infty$;

		\item the sequence $ (a_{k})_{k \geq 1} $ is convergent.
	\end{enumerate}
\end{lemma}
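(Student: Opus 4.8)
The final statement is Lemma~\ref{lem:quasi-Fej}, a quasi-Fejér / Robbins-Siegmund type result. It's stated as "a particular instance of Lemma 5.31 in [BauschkeCombettes2]", so it's a known result. Let me think about how to prove it directly.

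We have the recursion:
$$a_{k+1} \leq (1 + d_k) a_k - b_k$$

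where $a_k$ is bounded below, $b_k, d_k \geq 0$, and $\sum d_k < \infty$.

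We need to show:
(i) $\sum b_k < \infty$
(ii) $a_k$ converges.

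Let me work out the proof.
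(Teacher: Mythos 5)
Your submission stops exactly where the proof should begin: after restating the hypotheses and announcing ``Let me work out the proof,'' no argument for either claim is actually given. There is no construction, no estimate, and no limiting argument, so there is nothing to verify. For reference, the paper itself does not prove this lemma either -- it invokes it as a particular instance of Lemma~5.31 in \cite{BauschkeCombettes2} -- but a blind proof attempt is expected to supply the standard argument. The usual route is to introduce the normalised sequence $\tilde{a}_{k} := a_{k} \prod_{j=1}^{k-1} (1+d_{j})^{-1}$ (or, after translating so that $a_{k} \geq 0$, to observe that $\prod_{j \geq 1}(1+d_{j})$ converges because $\sum_{j} d_{j} < +\infty$), show that $\tilde{a}_{k+1} \leq \tilde{a}_{k} - \tilde{b}_{k}$ with $\tilde{b}_{k} \geq 0$ comparable to $b_{k}$, conclude that $(\tilde{a}_{k})$ is nonincreasing and bounded below hence convergent, and then telescope to get $\sum_{k} \tilde{b}_{k} < +\infty$, from which both (i) and (ii) follow. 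None of these steps appear in your write-up, so the attempt must be counted as incomplete.
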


To show convergence of the sequence of iterates we will use the following result, which is a finite dimensional version of the so-called \emph{Opial Lemma} \cite{Opial}.
\begin{lemma}\label{lem:opial}
	Let $ S \subseteq \R^{d} $ be a nonempty set and $ (z_{k})_{k \geq 1} \subseteq \R^{d} $ a sequence such that the following two conditions hold:
	\begin{enumerate}[(i)]
		\item for every $ z^{\ast} \in S $, $ \lim_{k \to + \infty} \norm{z_{k} - z^{\ast}} $ exists;

		\item every cluster point of $ (z_{k})_{k \geq 1} $ belongs to $S$.
	\end{enumerate}
	Then $ (z_{k})_{k \geq 0} $ converges to an element in $S$.
\end{lemma}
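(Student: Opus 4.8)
The plan is to exploit the finite-dimensional setting together with the two hypotheses to show that $(z_{k})_{k \geq 1}$ possesses exactly one cluster point, which by condition (ii) lies in $S$, and that the whole sequence converges to it. First I would establish boundedness: since $S \neq \emptyset$, fix any $z^{\ast} \in S$; condition (i) guarantees that $\lim_{k \to +\infty} \norm{z_{k} - z^{\ast}}$ exists and is therefore finite, so $(\norm{z_{k} - z^{\ast}})_{k \geq 1}$ is bounded, whence $(z_{k})_{k \geq 1}$ is bounded. By the Bolzano--Weierstrass theorem a bounded sequence in $\R^{d}$ admits at least one convergent subsequence, so the set of cluster points of $(z_{k})_{k \geq 1}$ is nonempty, and by (ii) every such cluster point belongs to $S$.

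The crux is to verify that this cluster point is unique. Suppose $\bar{z}_{1}$ and $\bar{z}_{2}$ are two cluster points; both lie in $S$, so by (i) the limits $\ell_{i} := \lim_{k \to +\infty} \norm{z_{k} - \bar{z}_{i}}$ exist for $i = 1, 2$. Expanding the squared norms and subtracting yields the polarisation identity
\[
	\normsq{z_{k} - \bar{z}_{1}} - \normsq{z_{k} - \bar{z}_{2}} = -2 \sprod{z_{k}}{\bar{z}_{1} - \bar{z}_{2}} + \normsq{\bar{z}_{1}} - \normsq{\bar{z}_{2}}.
\]
The left-hand side converges to $\ell_{1}^{2} - \ell_{2}^{2}$, so the sequence $\left( \sprod{z_{k}}{\bar{z}_{1} - \bar{z}_{2}} \right)_{k \geq 1}$ converges. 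Passing along two subsequences, one tending to $\bar{z}_{1}$ and the other to $\bar{z}_{2}$, the limit of this inner product must equal both $\sprod{\bar{z}_{1}}{\bar{z}_{1} - \bar{z}_{2}}$ and $\sprod{\bar{z}_{2}}{\bar{z}_{1} - \bar{z}_{2}}$; equating these gives $\normsq{\bar{z}_{1} - \bar{z}_{2}} = 0$, that is, $\bar{z}_{1} = \bar{z}_{2}$.

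Finally, a bounded sequence in $\R^{d}$ with a unique cluster point converges to that cluster point, so $(z_{k})_{k \geq 1}$ converges to the common value $\bar{z} \in S$, completing the argument. The only delicate point I anticipate is the uniqueness step: the subtlety is recognising that the \emph{full} inner-product sequence converges (forced by the existence of both limits $\ell_{1}, \ell_{2}$ via the identity above), which is precisely what allows the two subsequential limits to be equated. Everything else is routine once boundedness and the finite dimensionality of $\R^{d}$ are in place.
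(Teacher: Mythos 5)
Your proof is correct; the paper itself states this lemma without proof, citing Opial's original work, and your argument is precisely the standard one for the finite-dimensional case (boundedness from (i), Bolzano--Weierstrass, uniqueness of the cluster point via the convergence of $\sprod{z_{k}}{\bar{z}_{1} - \bar{z}_{2}}$ forced by the two existing limits, and then convergence of a bounded sequence with a unique cluster point). You correctly identified the one delicate step -- that the \emph{full} inner-product sequence converges, which is what lets you equate the two subsequential limits -- so nothing is missing.
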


The following result about maximal monotone operators will be crucial in the convergence analysis to verify item (ii) from Lemma~\ref{lem:opial}.
\begin{proposition}[{see Proposition~20.33 in \cite{BauschkeCombettes}}]\label{prop:seq-closed}
	Let $ A: \R^{d} \to 2^{\R^{d}} $ be maximal monotone, with $ \gra A $ denoting the graph of $ A $. Then $ \gra A $ is closed, i.e., for every sequence $ (z_{k}, v_{k})_{k \geq 1} $ in $ \gra A $ and every $ (z, v) \in \R^{d} \times \R^{d} $, if $ z_{k} \to z $ and $ v_{k} \to v $, then $ (z, v) \in \gra A $.
\end{proposition}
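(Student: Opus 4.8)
The plan is to prove closedness by leveraging the maximality of $A$, which supplies a convenient characterisation of membership in $\gra A$. Recall that maximality means the graph of $A$ is not properly contained in the graph of any other monotone operator; equivalently, a pair $(z, v)$ belongs to $\gra A$ precisely when it is monotonically related to every element of $\gra A$, that is, when $\sprod{z - x}{v - u} \geq 0$ for all $(x, u) \in \gra A$. This reformulation is the heart of the argument, because it reduces the closedness claim to passing a family of scalar inequalities to the limit.

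Concretely, suppose we are given a sequence $(z_{k}, v_{k})_{k \geq 1}$ in $\gra A$ with $z_{k} \to z$ and $v_{k} \to v$, and we wish to show $(z, v) \in \gra A$. First I would fix an arbitrary pair $(x, u) \in \gra A$. Since both $(z_{k}, v_{k})$ and $(x, u)$ lie in $\gra A$, monotonicity of $A$ yields $\sprod{z_{k} - x}{v_{k} - u} \geq 0$ for every $k \geq 1$. Next I would let $k \to +\infty$: because the inner product is continuous on $\R^{d} \times \R^{d}$ and $z_{k} \to z$, $v_{k} \to v$, the left-hand side converges to $\sprod{z - x}{v - u}$, so the inequality survives in the limit and we obtain $\sprod{z - x}{v - u} \geq 0$.

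Since the pair $(x, u) \in \gra A$ was arbitrary, this shows that $(z, v)$ is monotonically related to every element of $\gra A$. Invoking maximality of $A$ in the form of the characterisation above then lets us conclude $(z, v) \in \gra A$, which is exactly the asserted closedness of the graph. The only genuinely nontrivial ingredient is the maximality characterisation, since it is what converts ``being monotonically related to the whole graph'' into actual membership; by contrast, passing the monotonicity inequality to the limit is routine thanks to continuity of the inner product, and no compactness or dimension-specific argument is required, so the same reasoning would carry over verbatim to a general Hilbert space setting.
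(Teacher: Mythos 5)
Your proof is correct: the paper itself does not prove this statement but imports it from Proposition~20.33 of the cited reference, and your argument --- passing the monotonicity inequality $\sprod{z_{k} - x}{v_{k} - u} \geq 0$ to the limit via continuity of the inner product and then invoking the standard characterisation of maximality (membership in $\gra A$ is equivalent to being monotonically related to all of $\gra A$) --- is precisely the standard proof given there. Nothing is missing, and your remark that the argument works verbatim in a general Hilbert space is also accurate.
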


\begin{lemma}
	\label{lem:quad}
	Let $a, b, c \in \R$ be such that $a \neq 0$ and $b^{2} - ac \leq 0$.
	The following statements are true:
	\begin{enumerate}[(i)]
		\item
		\label{quad:vec-pos}
		if $a > 0$, then
		\begin{equation*}
		a \left\lVert x \right\rVert ^{2} + 2b \left\langle x, y \right\rangle + c \left\lVert y \right\rVert ^{2} \geq 0 \quad \forall x, y \in \R^{d} ;
		\end{equation*}

		\item
		\label{quad:vec}
		if $a < 0$, then
		\begin{equation*}
		a \left\lVert x \right\rVert ^{2} + 2b \left\langle x, y \right\rangle + c \left\lVert y \right\rVert ^{2} \leq 0 \quad \forall x, y \in \R^{d} .
		\end{equation*}
	\end{enumerate}
\end{lemma}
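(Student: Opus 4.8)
The plan is to treat the given expression as a quadratic form and to dispose of part (ii) by a sign flip once part (i) is established. For part (i), with $a > 0$, the natural move is completing the square: I would verify the identity
\begin{equation*}
a \normsq{x} + 2b \sprod{x}{y} + c \normsq{y} = a \normsq{x + \tfrac{b}{a} y} + \left( c - \tfrac{b^{2}}{a} \right) \normsq{y},
\end{equation*}
which holds because expanding $a \normsq{x + \frac{b}{a} y}$ reproduces $a \normsq{x} + 2b \sprod{x}{y} + \frac{b^{2}}{a} \normsq{y}$, and the leftover coefficient of $\normsq{y}$ is exactly $c - \frac{b^{2}}{a}$. Both terms on the right-hand side are then nonnegative: the first since $a > 0$ and a squared norm is nonnegative, and the second since $c - \frac{b^{2}}{a} = \frac{ac - b^{2}}{a} \geq 0$, using $a > 0$ together with the hypothesis $b^{2} - ac \leq 0$. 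Summing gives the claimed inequality for every $x, y \in \R^{d}$.

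For part (ii), with $a < 0$, I would simply apply part (i) to the triple $(-a, -b, -c)$. This is legitimate because $-a > 0$ and $(-b)^{2} - (-a)(-c) = b^{2} - ac \leq 0$, so the hypotheses are met; part (i) then yields $-a \normsq{x} - 2b \sprod{x}{y} - c \normsq{y} \geq 0$, and multiplying by $-1$ produces the desired reversed inequality. Alternatively, the same completing-the-square identity applies directly: when $a < 0$ both $a \normsq{x + \frac{b}{a} y}$ and $\frac{ac - b^{2}}{a} \normsq{y}$ are nonpositive.

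There is no genuine obstacle here; the result is a routine fact about positive (respectively negative) semidefinite $2 \times 2$ quadratic forms lifted to the vector setting. The only points requiring care are the sign bookkeeping in the coefficient $\frac{ac - b^{2}}{a}$ --- noting that $ac - b^{2} \geq 0$, so its sign is governed entirely by that of $a$ --- and confirming that the completing-the-square identity holds verbatim for vectors $x, y$ rather than scalars, which it does because only the bilinearity of $\sprod{\ph}{\ph}$ and the relation $\normsq{v} = \sprod{v}{v}$ are used.
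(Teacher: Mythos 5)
Your proof is correct: the completing-the-square identity is verified accurately, the sign of $c - \tfrac{b^{2}}{a} = \tfrac{ac - b^{2}}{a}$ is handled properly, and the reduction of part (ii) to part (i) via $(-a,-b,-c)$ is legitimate. The paper states Lemma~\ref{lem:quad} without proof, treating it as an elementary fact about semidefinite quadratic forms, and your argument is exactly the standard one the authors implicitly rely on.
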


\section{Convergence Analysis}\label{app:proofs}
In the following section we will establish the necessary convergence analysis to prove the main theoretical results of this work Theorem~\ref{thm:conv} and Theorem~\ref{thm:gap-res}.

\subsection{Notation and Preliminary Considerations}\label{app:prelim}
We start with proving the fact, that indeed
$$ \Res(z) \leq r(z) \quad \forall z \in \R^{d}.$$
To this end is sufficient to only consider the case when $ z \in C $ as otherwise the inequality trivially holds. Let $ z \in C $ and $ \zeta \in N_{C}(z) $, then by the following equivalence (see Proposition~6.45 in \cite{BauschkeCombettes})
\begin{equation}\label{eq:projNC}
	p = P_{C}(v)
	\quad \Leftrightarrow \quad
	\exists \zeta \in N_{C}(p) \text{ such that } v - p = \zeta,
\end{equation}
we obtain that $ z = P_{C}[z + \zeta] $. Since the projection onto a nonempty closed convex set is nonexpansive, we then deduce that
\begin{equation}\label{eq:gaps}
	\Res(z)
	= \left\lVert z - P_{C} \left[ z - F(z) \right] \right\rVert
	= \left\lVert P_{C} \left[ z + \zeta \right] - P_{C} \left[ z - F(z) \right] \right\rVert
	\leq \left\lVert F(z) + \zeta \right\rVert .
\end{equation}
Since $\zeta \in N_{C} (z)$ was arbitrary, we conclude that $ \Res(z) \leq r(z) $.

The characterisation~\eqref{eq:projNC} can be also used to deduce a projection step from the second line of~\eqref{eq:fogda-implicit-proj}. For all $ k \geq 1 $ we have
\begin{equation}
	z_{k+1}	= w_{k} - \gamma \left( 1 + \frac{k}{k + \alpha}\right) \left( F(w_{k}) - F(w_{k-1}) + \zeta_{k+1} - \zeta_{k} \right)
\end{equation}
with $ \zeta_{k} \in N_{C}(z_{k}) $, which using~\eqref{eq:projNC} is equivalent to
\begin{equation}
	z_{k+1} = P_{C} \left[ w_{k} - \gamma \left( 1 + \frac{k}{k + \alpha} \right) \left( F(w_{k}) - F(w_{k-1}) - \zeta_{k} \right) \right],
\end{equation}
using that for $ \lambda > 0 $
\begin{equation}
	\zeta \in N_{C}(z)
	\quad \Leftrightarrow \quad
	\lambda \zeta \in N_{C}(z).
\end{equation}

In the following for every $ k \geq 1 $ we will use the notation
\begin{equation}\label{eq:vk}
	v_{k} := F(w_{k-1}) + \zeta_{k},
\end{equation}
which we plug into Algorithm~\ref{algo:split} to obtain
\begin{subequations}
\begin{align}
	w_{k} &= z_{k} + \dfrac{k}{k + \alpha} \Big( z_{k} - z_{k-1} \Big) - \gamma \dfrac{\alpha}{k + \alpha} v_{k} \label{algo:im:bz}, \\
	z_{k+1}	&= w_{k} - \gamma \left( 1 + \dfrac{k}{k + \alpha} \right) \Big( v_{k+1} - v_{k} \Big). \label{algo:im:z}\\
\end{align}
\end{subequations}
Since $ 0 < \gamma < \nicefrac{1}{4L}$ there exists $0 < \varepsilon < 1$ such that
\begin{equation}\label{defi:s-e}
	\gamma = \frac{1 - \varepsilon}{4L}.
\end{equation}
Hence, the definition of $v_{k}$ together with the Lipschitz continuity of $F$ give us
\begin{align}\label{split:Lip}
 \left\lVert \zeta_{k+1} + F \left( z_{k+1} \right) - v_{k+1} \right\rVert
	& = \left\lVert F \left( z_{k+1} \right) - F \left( w_{k} \right) \right\rVert
	\leq L \left\lVert z_{k+1} - w_{k} \right\rVert \\
	&\leq \gamma L \left\lVert v_{k+1} - v_{k} \right\rVert
	\leq \frac{1 - \varepsilon}{4} \left\lVert v_{k+1} - v_{k} \right\rVert
	\leq \dfrac{1}{4} \left\lVert v_{k+1} - v_{k} \right\rVert
	\leq \left\lVert v_{k+1} - v_{k} \right\rVert.
\end{align}
By summing up~\eqref{algo:im:bz} and~\eqref{algo:im:z} we find for every $k \geq 1$
\begin{equation}\label{split:d-u}
	\left( k + \alpha \right) \left( z_{k+1} - z_{k} \right) - k \left( z_{k} - z_{k-1} \right)
	= \minus \alpha \gamma v_{k+1} - 2 \gamma k \left( v_{k+1} - v_{k} \right) .
\end{equation}

Let $0 \leq \lambda \leq \alpha - 1$ and $z^{\ast} \in \Omega$. Following \cite{bot2022fastOGDA} we denote for every $k \geq 1$
\begin{align}
	\label{split:defi:u-k-1-lambda}
	u_{\lambda,k} &:= 2 \lambda \left( z_{k} - z^{\ast} \right) + 2k \left( z_{k} - z_{k-1} \right) + \dfrac{3 \alpha - 2}{\alpha - 1} \gamma k v_{k},\\
	\label{split:defi:E-k}
	\cE_{\lambda,k}	&:= \dfrac{1}{2} \left\lVert u_{\lambda,k} \right\rVert ^{2} + 2 \lambda \left( \alpha - 1 - \lambda \right) \left\lVert z_{k} - z^{\ast} \right\rVert ^{2} + \dfrac{2 \left( \alpha - 2 \right)}{\alpha - 1}  \lambda \gamma k \left\langle z_{k} - z^{\ast}, v_{k} \right\rangle\\
		\nonumber
		&\quad\phantom{:} + \dfrac{\alpha - 2}{\alpha - 1} \gamma^{2} k \left( \dfrac{3 \alpha - 2}{2 \left( \alpha - 1 \right)} k + \alpha \right) \left\lVert v_{k} \right\rVert ^{2},\\
	\label{split:defi:F}
	\cG_{\lambda,k} &:= \cE_{\lambda,k} - \dfrac{2 \left( \alpha - 2 \right)}{\alpha - 1} \gamma k^{2} \left\langle z_{k} - z_{k-1}, F \left( z_{k} \right) - F \left( w_{k-1} \right) \right\rangle\\
		\nonumber
		&\quad\phantom{:} + \dfrac{\alpha - 2}{\alpha - 1} \gamma^{2} k \sqrt{k} \left( \left( 1 - \varepsilon \right) \sqrt{k} + \alpha \right) \left\lVert v_{k} - v_{k-1} \right\rVert^{2}.
\end{align}

\subsection{Properties of the Energy Functions}
We collect the properties of the energy functions $\left(\cE_{\lambda,k}  \right)_{k \geq 1}$ and $\left(\cG_{\lambda,k}  \right)_{k \geq 1}$ in the following results. Please note that the statement of the first lemma can be deduced from eq. (81) in \cite{bot2022fastOGDA}, taking into account an appropriate formal correspondence between certain quantities. For better comprehensibility and in order to be able to refer to equations later on, we provide its proof nevertheless.

\begin{lemma}
	\label{lem:ene}
	Let $z^{\ast} \in \Omega$ and $\left(z_{k} \right)_{k \geq 0}$ be the sequence generated by Algorithm~\ref{algo:split}. For $0 \leq \lambda \leq \alpha - 1$, the following identity holds for every $k \geq 1$
	\begin{equation}\label{split:dE}
	\begin{split}
		\MoveEqLeft \cE_{\lambda,k+1} - \cE_{\lambda,k}\\
&= \minus 4 \left( \alpha - 2 \right) \lambda \gamma \left\langle z_{k+1} - z^{\ast}, v_{k+1} \right\rangle\\
		&\quad+ 2 \left( \lambda + 1 - \alpha \right) \left( 2k + \alpha + 1 \right) \left\lVert z_{k+1} - z_{k} \right\rVert ^{2} \\
		& \quad + \dfrac{2}{\alpha - 1} \Bigl( 4 \left( \alpha - 1 \right) \left( \lambda + 1 - \alpha \right) - \alpha \left( \alpha - 2 \right) \Bigr) \gamma k \left\langle z_{k+1} - z_{k}, v_{k+1} \right\rangle \\
		& \quad + \dfrac{2}{\alpha - 1} \Bigl( 2 \alpha \left( \alpha - 1 \right) \left( \lambda + 1 - \alpha \right) + \alpha - 2 \left( \alpha - 1 \right) ^{2} \Bigr) \gamma \left\langle z_{k+1} - z_{k}, v_{k+1} \right\rangle \\
		& \quad - \dfrac{2 \left( \alpha - 2 \right)}{\alpha - 1} \gamma k \left( k + \alpha \right) \left\langle z_{k+1} - z_{k}, v_{k+1} - v_{k} \right\rangle - \dfrac{\alpha - 2}{\alpha - 1} \gamma^{2} k \left( 2k + \alpha \right) \left\lVert v_{k+1} - v_{k} \right\rVert ^{2} \\
		& \quad - \dfrac{\alpha - 2}{\alpha - 1} \gamma^{2} \left( 2 \left( 3 \alpha - 2 \right) k + 2 \alpha^{2} + \alpha - 2 \right) \left\lVert v_{k+1} \right\rVert ^{2} .
	\end{split}
	\end{equation}
\end{lemma}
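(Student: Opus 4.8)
The statement is a pure algebraic identity: it uses only the defining relations \eqref{algo:im:bz}--\eqref{algo:im:z} of the iterates (equivalently their sum \eqref{split:d-u}), and neither monotonicity nor Lipschitz continuity of $F$ enters. My plan is therefore to expand $\cE_{\lambda,k+1} - \cE_{\lambda,k}$ group by group, following the four summands in \eqref{split:defi:E-k}, and to reduce every \emph{second-order} difference in the $z$-sequence to first-order operator increments by means of \eqref{split:d-u}. The key structural point is that \eqref{split:d-u} is the only relation linking $z_{k-1}$ to the remaining quantities, and since the right-hand side of \eqref{split:dE} contains no $z_{k-1}$, this substitution is exactly the mechanism that must eliminate all $z_{k-1}$-dependence.

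First I treat the quadratic head term. Writing $c := \tfrac{3\alpha-2}{\alpha-1}$, I substitute \eqref{split:d-u} into $u_{\lambda,k}$ to remove the combination $2k(z_{k}-z_{k-1})$, expressing $u_{\lambda,k}$ through $z_{k}-z^{\ast}$, $z_{k+1}-z_{k}$, $v_{k+1}$ and $v_{k+1}-v_{k}$ only; the increment $u_{\lambda,k+1}-u_{\lambda,k}$ is computed likewise, splitting $2(k+1)(z_{k+1}-z_{k}) - 2k(z_{k}-z_{k-1})$ as $2\bigl[(k+\alpha)(z_{k+1}-z_{k}) - k(z_{k}-z_{k-1})\bigr] + 2(1-\alpha)(z_{k+1}-z_{k})$ and replacing the bracket via \eqref{split:d-u}. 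I then use $\tfrac12\normsq{u_{\lambda,k+1}} - \tfrac12\normsq{u_{\lambda,k}} = \sprod{u_{\lambda,k+1}-u_{\lambda,k}}{u_{\lambda,k}} + \tfrac12\normsq{u_{\lambda,k+1}-u_{\lambda,k}}$ and insert the two expressions just obtained.

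Next I handle the three lower-order groups. In $\cE_{\lambda,k}$ the coefficient of $\normsq{z_{k}-z^{\ast}}$ is $2\lambda^{2} + 2\lambda(\alpha-1-\lambda) = 2\lambda(\alpha-1)$, a constant independent of $k$, so this part telescopes cleanly and I rewrite $\normsq{z_{k+1}-z^{\ast}} - \normsq{z_{k}-z^{\ast}} = 2\sprod{z_{k+1}-z_{k}}{z_{k+1}-z^{\ast}} - \normsq{z_{k+1}-z_{k}}$. For the inner-product group and the $\normsq{v_{k}}$ group I expand directly, again invoking \eqref{split:d-u} wherever a factor $z_{k}-z_{k-1}$ survives, and systematically rewrite $z_{k}-z^{\ast}$, $v_{k}$, $z_{k}-z_{k-1}$ in terms of the ``new-iterate'' quantities $z_{k+1}-z^{\ast}$, $v_{k+1}$, $z_{k+1}-z_{k}$, $v_{k+1}-v_{k}$ that appear on the right-hand side of \eqref{split:dE}.

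Finally I collect all contributions along the monomials $\sprod{z_{k+1}-z^{\ast}}{v_{k+1}}$, $\normsq{z_{k+1}-z_{k}}$, $\sprod{z_{k+1}-z_{k}}{v_{k+1}}$ (separating the terms linear in $k$ from those independent of $k$, which produce the two distinct inner-product lines in \eqref{split:dE}), $\sprod{z_{k+1}-z_{k}}{v_{k+1}-v_{k}}$, $\normsq{v_{k+1}-v_{k}}$ and $\normsq{v_{k+1}}$, and match coefficients. The conceptual content is light; the \textbf{main obstacle} is purely the bookkeeping of the $\alpha$- and $k$-dependent rational coefficients through the substitution \eqref{split:d-u} and the repeated re-expansion of the $u$-term. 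Two internal checks guard against sign and factor errors: the $\lambda^{2}$-contributions must cancel identically, so that every coefficient in \eqref{split:dE} is at most affine in $\lambda$; and all $z_{k-1}$-dependent terms must vanish once \eqref{split:d-u} has been applied everywhere. Since the identity also follows from eq.~(81) in \cite{bot2022fastOGDA} under the formal correspondence noted before the lemma, I would additionally cross-check the final coefficients against that reference.
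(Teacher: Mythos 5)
Your proposal is correct and follows essentially the same route as the paper's proof: both reduce the claim to pure algebra by combining the two update rules into \eqref{split:d-u}, expanding $\cE_{\lambda,k+1}-\cE_{\lambda,k}$ via a polarization identity for $\normsq{u_{\lambda,k+1}}-\normsq{u_{\lambda,k}}$ together with the telescoping of the remaining groups in \eqref{split:defi:E-k}, and collecting coefficients along the same monomials in $z_{k+1}-z^{\ast}$, $z_{k+1}-z_{k}$, $v_{k+1}$ and $v_{k+1}-v_{k}$. The only immaterial difference is that the paper anchors the polarization identity at $u_{\lambda,k+1}$ (which is already expressed in the ``new-iterate'' quantities) while you anchor at $u_{\lambda,k}$ and then re-substitute \eqref{split:d-u}; your two internal consistency checks (cancellation of the $\lambda^{2}$-terms and elimination of all $z_{k-1}$-dependence) are both valid and would indeed catch the typical bookkeeping errors.
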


\begin{proof}
	Recall that by the definition in \eqref{split:defi:u-k-1-lambda}, we have for every $k \geq 1$
	\begin{equation}
		u_{\lambda,k}
		= 2 \lambda \left( z_{k} - z^{\ast} \right) + 2k \left( z_{k} - z_{k-1} \right) + \dfrac{3 \alpha - 2}{\alpha - 1} \gamma k v_{k}.
	\end{equation}
	Similarly,
	\begin{equation}\label{split:defi:u-k-lambda}
		u_{\lambda,k+1}
		= 2 \lambda \left( z_{k+1} - z^{\ast} \right) + 2 \left( k + 1 \right) \left( z_{k+1} - z_{k} \right) + \dfrac{3 \alpha - 2}{\alpha - 1} \gamma \left( k + 1 \right) v_{k+1} .
	\end{equation}
	Thus, after subtraction we deduce from \eqref{split:d-u} that
	\begin{equation}\label{split:defi:u-k-lambda:dif}
	\begin{split}
		\MoveEqLeft u_{\lambda,k+1} - u_{\lambda,k} \\
		&= 2 \left( \lambda + 1 - \alpha \right) \left( z_{k+1} - z_{k} \right) + 2 \left( k + \alpha \right) \left( z_{k+1} - z_{k} \right) - 2k \left( z_{k} - z_{k-1} \right) \\
			&\quad + \dfrac{3 \alpha - 2}{\alpha - 1} \gamma v_{k+1} + \dfrac{3 \alpha - 2}{\alpha - 1} \gamma k \left( v_{k+1} - v_{k} \right) \\
		&= 2 \left( \lambda + 1 - \alpha \right) \left( z_{k+1} - z_{k} \right) + \dfrac{\alpha - 2 \left( \alpha - 1 \right)^{2}}{\alpha - 1} \gamma v_{k+1} + \dfrac{2 - \alpha}{\alpha - 1} \gamma k \left( v_{k+1} - v_{k} \right) .
	\end{split}
	\end{equation}
	For $ k \geq 1 $ we know that
	\begin{equation}\label{split:dif:u-lambda:pre}
		\dfrac{1}{2} \left( \left\lVert u_{\lambda,k+1} \right\rVert ^{2} - \left\lVert u_{\lambda,k} \right\rVert ^{2} \right)
		= \left\langle u_{\lambda,k+1}, u_{\lambda,k+1} - u_{\lambda,k} \right\rangle - \dfrac{1}{2} \left\lVert u_{\lambda,k+1} - u_{\lambda,k} \right\rVert ^{2},
	\end{equation}
	and that for every $ k \geq 0 $
	\begin{equation}\label{split:dif:norm}
	\begin{split}
		\MoveEqLeft 2 \lambda \left( \alpha - 1 - \lambda \right) \left( \left\lVert z_{k+1} - z^{\ast} \right\rVert ^{2} - \left\lVert z_{k} - z^{\ast} \right\rVert ^{2} \right) \\
		&= 4 \lambda \left( \alpha - 1 - \lambda \right) \left\langle z_{k+1} - z^{\ast}, z_{k+1} - z_{k} \right\rangle - 2 \lambda \left( \alpha - 1 - \lambda \right) \left\lVert z_{k+1} - z_{k} \right\rVert ^{2}.
	\end{split}
	\end{equation}
	We use the relations \eqref{split:defi:u-k-lambda} and \eqref{split:defi:u-k-lambda:dif} to derive for every $k \geq 1$
	\begin{equation}\label{split:dif:u-lambda:inn}
	\begin{split}
		\MoveEqLeft \left\langle u_{\lambda,k+1}, u_{\lambda,k+1} - u_{\lambda,k} \right\rangle\\
		&= 4 \lambda \left( \lambda + 1 - \alpha \right) \left\langle z_{k+1} - z^{\ast}, z_{k+1} - z_{k} \right\rangle\\
		& \quad + \dfrac{2}{\alpha - 1} \left( \alpha - 2 \left( \alpha - 1 \right) ^{2} \right) \lambda \gamma \left\langle z_{k+1} - z^{\ast}, v_{k+1} \right\rangle \\
		& \quad - \dfrac{2 \left( \alpha - 2 \right)}{\alpha - 1} \lambda \gamma k \left\langle z_{k+1} - z^{\ast}, v_{k+1} - v_{k} \right\rangle
		+ 4 \left( \lambda + 1 - \alpha \right) \left( k + 1 \right) \left\lVert z_{k+1} - z_{k} \right\rVert ^{2} \\
		& \quad + \dfrac{2}{\alpha - 1} \Bigl( \left( 3 \alpha - 2 \right) \left( \lambda + 1 - \alpha \right) + \alpha - 2 \left( \alpha - 1 \right) ^{2} \Bigr) \gamma \left( k + 1 \right) \left\langle z_{k+1} - z_{k}, v_{k+1} \right\rangle \\
		& \quad - \dfrac{2 \left( \alpha - 2 \right)}{\alpha - 1} \gamma \left( k + 1 \right) k \left\langle z_{k+1} - z_{k}, v_{k+1} - v_{k} \right\rangle \\
		& \quad + \dfrac{1}{\left( \alpha - 1 \right) ^{2}} \left( \alpha - 2 \left( \alpha - 1 \right) ^{2} \right) \left( 3 \alpha - 2 \right) \gamma^{2} \left( k + 1 \right) \left\lVert v_{k+1} \right\rVert ^{2} \\
		& \quad - \dfrac{1}{\left( \alpha - 1 \right) ^{2}} \left( \alpha - 2 \right) \left( 3 \alpha - 2 \right) \gamma^{2} \left( k + 1 \right) k \left\langle v_{k+1}, v_{k+1} - v_{k} \right\rangle,
	\end{split}
	\end{equation}
	and
	\begin{equation}\label{split:dif:u-lambda:norm}
	\begin{split}
		\MoveEqLeft \minus \dfrac{1}{2} \left\lVert u_{\lambda,k+1} - u_{\lambda,k} \right\rVert ^{2}
		= \minus 2 \left( \lambda + 1 - \alpha \right) ^{2} \left\lVert z_{k+1} - z_{k} \right\rVert ^{2}\\
		&- \dfrac{2}{\alpha - 1} \left( \alpha - 2 \left( \alpha - 1 \right) ^{2} \right) \left( \lambda + 1 - \alpha \right) \gamma \left\langle z_{k+1} - z_{k}, v_{k+1} \right\rangle \\
		&- \dfrac{1}{2 \left( \alpha - 1 \right) ^{2}} \left( \alpha - 2 \left( \alpha - 1 \right) ^{2} \right) ^{2} \gamma^{2} \left\lVert v_{k+1} \right\rVert ^{2}
		- \dfrac{\left( \alpha - 2 \right)^{2}}{2 \left( \alpha - 1 \right)^{2}} \gamma^{2} k^{2} \left\lVert v_{k+1} - v_{k} \right\rVert ^{2} \\
		&+ \dfrac{2 (\alpha - 2)}{\alpha - 1} \left( \lambda + 1 - \alpha \right) \gamma k \left\langle z_{k+1} - z_{k}, v_{k+1} - v_{k} \right\rangle \\
		&+ \dfrac{\alpha - 2}{\left( \alpha - 1 \right) ^{2}} \left( \alpha - 2 \left( \alpha - 1 \right) ^{2} \right) \gamma^{2} k \left\langle v_{k+1}, v_{k+1} - v_{k} \right\rangle .
	\end{split}
	\end{equation}
	After some algebra, we see that
	\begin{equation}\label{split:al}
	\begin{split}
		\MoveEqLeft \Bigl( \left( 3 \alpha - 2 \right) \left( \lambda + 1 - \alpha \right) + \alpha - 2 \left( \alpha - 1 \right) ^{2} \Bigr) \left( k + 1 \right) - \left( \lambda + 1 - \alpha \right) \left( \alpha - 2 \left( \alpha - 1 \right) ^{2} \right) \\
		&= \Bigl( \left( 3 \alpha - 2 \right) \left( \lambda + 1 - \alpha \right) + \alpha - 2 \left( \alpha - 1 \right) ^{2} \Bigr) k + 2 \alpha \left( \alpha - 1 \right) \left( \lambda + 1 - \alpha \right)\\
			&\quad + \alpha - 2 \left( \alpha - 1 \right) ^{2} \\
		&= \Bigl( \left( 3 \alpha - 2 \right) \left( \lambda + 1 - \alpha \right) + \alpha - 2 \left( \alpha - 1 \right) ^{2} + \left( \alpha - 2 \right) \lambda \Bigr) k - \left( \alpha - 2 \right) \lambda k \\
			&\quad + 2 \alpha \left( \alpha - 1 \right) \left( \lambda + 1 - \alpha \right) + \alpha - 2 \left( \alpha - 1 \right) ^{2} \\
		&= \Bigl( 4 \left( \alpha - 1 \right) \left( \lambda + 1 - \alpha \right) - \alpha \left( \alpha - 2 \right) \Bigr) k - \left( \alpha - 2 \right) \lambda k\\
			&\quad + 2 \alpha \left( \alpha - 1 \right) \left( \lambda + 1 - \alpha \right) + \alpha - 2 \left( \alpha - 1 \right) ^{2}.
	\end{split}
	\end{equation}
	By plugging \eqref{split:dif:u-lambda:inn} and \eqref{split:dif:u-lambda:norm} into \eqref{split:dif:u-lambda:pre}, and by taking into consideration the relation \eqref{split:al}, we get for every $k \geq 1$
	\begin{equation}\label{split:dif:u-lambda}
	\begin{split}
		\MoveEqLeft \dfrac{1}{2} \left( \left\lVert u_{\lambda,k+1} \right\rVert ^{2} - \left\lVert u_{\lambda,k} \right\rVert ^{2} \right)
		=  4 \lambda \left( \lambda + 1 - \alpha \right) \left\langle z_{k+1} - z^{\ast}, z_{k+1} - z_{k} \right\rangle \\
		& + \dfrac{2}{\alpha - 1} \left( \alpha - 2 \left( \alpha - 1 \right) ^{2} \right) \lambda \gamma \left\langle z_{k+1} - z^{\ast}, v_{k+1} \right\rangle \\
		& - \dfrac{2 \left( \alpha - 2 \right)}{\alpha - 1} \lambda \gamma k \left\langle z_{k+1} - z^{\ast}, v_{k+1} - v_{k} \right\rangle\\
		& + 2 \left( \lambda + 1 - \alpha \right) \left( 2k + \alpha + 1 - \lambda \right) \left\lVert z_{k+1} - z_{k} \right\rVert ^{2} \\
		& + \dfrac{2}{\alpha - 1} \Bigl( 4 \left( \alpha - 1 \right) \left( \lambda + 1 - \alpha \right) - \alpha \left( \alpha - 2 \right) - \left( \alpha - 2 \right) \lambda \Bigr) \gamma k \left\langle z_{k+1} - z_{k}, v_{k+1} \right\rangle \\
		& + \dfrac{2}{\alpha - 1} \Bigl( 2 \alpha \left( \alpha - 1 \right) \left( \lambda + 1 - \alpha \right) + \alpha - 2 \left( \alpha - 1 \right) ^{2} \Bigr) \gamma \left\langle z_{k+1} - z_{k}, v_{k+1} \right\rangle \\
		& - \dfrac{2 \left( \alpha - 2 \right)}{\alpha - 1} \gamma k \left( k + \alpha - \lambda \right) \left\langle z_{k+1} - z_{k}, v_{k+1} - v_{k} \right\rangle
		- \dfrac{\left( \alpha - 2 \right)^{2}}{2 \left( \alpha - 1 \right)^{2}} \gamma^{2} k^{2} \left\lVert v_{k+1} - v_{k} \right\rVert ^{2} \\
		& + \dfrac{1}{2 \left( \alpha - 1 \right)^{2}} \left( \alpha - 2 \left( \alpha - 1 \right) ^{2} \right) \gamma^{2} \left( 2 \left( 3 \alpha - 2 \right) k + 2 \alpha^{2} + \alpha - 2 \right) \left\lVert v_{k+1} \right\rVert ^{2} \\
		& - \dfrac{\alpha - 2}{\left( \alpha - 1 \right) ^{2}} \gamma^{2} k \bigl( \left( 3 \alpha - 2 \right) k + 2 \alpha \left( \alpha - 1 \right) \bigr) \left\langle v_{k+1}, v_{k+1} - v_{k} \right\rangle.
	\end{split}
	\end{equation}
	Furthermore, one can show that for every $k \geq 1$ we get
	\begin{equation}\label{split:dif:vi}
	\begin{split}
		\MoveEqLeft \left( k + 1 \right) \left\langle z_{k+1} - z^{\ast}, v_{k+1} \right\rangle - k \left\langle z_{k} - z^{\ast}, v_{k} \right\rangle \\
		&= \left\langle z_{k+1} - z^{\ast}, v_{k+1} \right\rangle
		+ k \left( \left\langle z_{k+1} - z^{\ast}, v_{k+1} \right\rangle - \left\langle z_{k} - z^{\ast}, v_{k} \right\rangle \right) \\
		&= \left\langle z_{k+1} - z^{\ast}, v_{k+1} \right\rangle
		+ k \left\langle z_{k+1} - z^{\ast}, v_{k+1} - v_{k} \right\rangle \\
		& \qquad - k \left\langle z_{k+1} - z_{k}, v_{k+1} - v_{k} \right\rangle + k \left\langle z_{k+1} - z_{k}, v_{k+1} \right\rangle
	\end{split}
	\end{equation}
	and
	\begin{equation}\label{split:dif:eq}
	\begin{split}
		\MoveEqLeft \left( k + 1 \right) \Bigl( \left( 3 \alpha - 2 \right) \left( k+1 \right) + 2 \alpha \left( \alpha - 1 \right) \Bigr) \left\lVert v_{k+1} \right\rVert ^{2} - k \Bigl( \left( 3 \alpha - 2 \right) k + 2 \alpha \left( \alpha - 1 \right) \Bigr) \left\lVert v_{k} \right\rVert ^{2} \\
		&= \left( 2 \left( 3 \alpha - 2 \right) k + 2 \alpha^{2} + \alpha - 2 \right) \left\lVert v_{k+1} \right\rVert ^{2}\\
			&\quad+ k \Bigl( \left( 3 \alpha - 2 \right) k + 2 \alpha \left( \alpha - 1 \right) \Bigr) \left( \left\lVert v_{k+1} \right\rVert ^{2} - \left\lVert v_{k} \right\rVert ^{2} \right) \\
		&= \left( 2 \left( 3 \alpha - 2 \right) k + 2 \alpha^{2} + \alpha - 2 \right) \left\lVert v_{k+1} \right\rVert ^{2}\\
			&\quad + 2 k \Bigl( \left( 3 \alpha - 2 \right) k + 2 \alpha \left( \alpha - 1 \right) \Bigr) \left\langle v_{k+1}, v_{k+1} - v_{k} \right\rangle \\
			&\quad - k \Bigl( \left( 3 \alpha - 2 \right) k + 2 \alpha \left( \alpha - 1 \right) \Bigr) \left\lVert v_{k+1} - v_{k} \right\rVert^{2}.
	\end{split}
	\end{equation}
	In addition, direct computations show that
	\begin{equation}
		\alpha - 2 \left( \alpha - 1 \right) ^{2} + \alpha - 2
		= \minus 2 \left( \alpha - 1 \right) \left( \alpha - 2 \right)
	\end{equation}
	and
	\begin{equation}
		\minus \left( \alpha - 2 \right) ^{2} - \left( \alpha - 2 \right) \left( 3 \alpha - 2 \right)
		= \minus 4 \left( \alpha - 2 \right) \left( \alpha - 1 \right).
	\end{equation}
	Hence, multiplying \eqref{split:dif:vi} by $ \nicefrac{2 \lambda \gamma (\alpha - 2)}{(\alpha - 1)} \geq 0 $ and \eqref{split:dif:eq} by $\nicefrac{\gamma^{2} (\alpha - 2)}{2 (\alpha - 1)^{2}} > 0$, followed by summing up the resulting identities with \eqref{split:dif:norm} and \eqref{split:dif:u-lambda}, yields \eqref{split:dE} for every $k \geq 1$.
\end{proof}

\begin{lemma}
	\label{lem:reg}
	Let $z^{\ast} \in \Omega$ and $\left(z_{k} \right)_{k \geq 0}$ be the sequence generated by Algorithm~\ref{algo:split}. For $0 \leq \lambda \leq \alpha - 1$, the following statements are true:
	\begin{enumerate}[(i)]
		\item for every $k \geq k_{0} := \max \left\lbrace 2, \left\lceil \frac{1}{\alpha - 2} \right\rceil \right\rbrace$ the following holds:
		\begin{equation}
		\begin{split}
			\MoveEqLeft \cG_{\lambda,k+1} - \cG_{\lambda,k}
			\leq \dfrac{\left( \alpha - 1 \right) \left( \alpha - 2 \right)}{\varepsilon \left( k+1 \right) ^{2}}  \lambda ^{2} \left\lVert z_{k+1} - z^{\ast} \right\rVert ^{2}\\
				&- 4 \left( \alpha - 2 \right) \lambda \gamma \left\langle z_{k+1} - z^{\ast}, \zeta_{k+1} + F \left( z_{k+1} \right) \right\rangle \\
				&+ 4 \left( \eta_{0} k + \eta_{1} \right) \gamma \left\langle z_{k+1} - z_{k}, v_{k+1} \right\rangle
				+ \left( \eta_{2} k + \kappa_{0} \sqrt{k} \right) \left\lVert z_{k+1} - z_{k} \right\rVert ^{2} \\
				&+ 4 \left( \eta_{3} k + \kappa_{1} \sqrt{k} \right) \gamma^{2} \left\lVert v_{k+1} \right\rVert ^{2} - \dfrac{\alpha - 2}{\alpha - 1} \mu_{k} \gamma^{2} \left\lVert v_{k+1} - v_{k} \right\rVert ^{2},
		\end{split}
		\end{equation}
		where
		\begin{equation}\label{ene:const}
		\begin{split}
			\mu_{k} & := (k+1) \left( \varepsilon (k+1) + \alpha^{2} \sqrt{k+1} + (\alpha - 4) \right) - (\alpha - 2), \\[1.5ex]
			\eta_{0} 	& := \dfrac{1}{2 \left( \alpha - 1 \right)} \left( 4 \left( \alpha - 1 \right) \left( \lambda + 1 - \alpha \right) - \alpha \left( \alpha - 2 \right) \right) < 0, \\
			\eta_{1} 	& := \dfrac{1}{2 \left( \alpha - 1 \right)} \left( 2 \alpha \left( \alpha - 1 \right) \left( \lambda + 1 - \alpha \right) + \alpha - 2 \left( \alpha - 1 \right) ^{2} \right) < 0, \\
			\eta_{2} 	& := 4 \left( \lambda + 1 - \alpha \right) \leq 0, \\
			\eta_{3} 	& := - \dfrac{1}{2 \left( \alpha - 1 \right)} \left( \alpha - 2 \right) \left( 3 \alpha - 2 \right) < 0, \\[1.5ex]
			\kappa_{0} 	& := \dfrac{1}{\alpha - 1} \left( \alpha - 2 \right) \sqrt{\alpha - 2} > 0, \\
			\kappa_{1} 	& := \dfrac{1}{4 \left( \alpha - 1 \right)} \left( \alpha - 2 \right) \alpha > 0;
		\end{split}
		\end{equation}

		\item for every $k \geq 1$ one has the following lower bound for the quantity $ \cG_{\lambda, k} $
		\begin{equation}\label{ene:low}
		\begin{split}
			\MoveEqLeft \cG_{\lambda,k}
			\geq \dfrac{\alpha - 2}{4 \left( 3 \alpha - 2 \right)} \left\lVert 4 \lambda \left( z_{k} - z^{\ast} \right) + 2 k \left( z_{k} - z_{k-1} \right) + \dfrac{2 (3 \alpha - 2)}{\alpha - 1} \gamma k v_{k} \right\rVert ^{2} \\
			& + \dfrac{\left( \alpha - 2 \right) ^{2}}{4 \left( 3 \alpha - 2 \right) \left( \alpha - 1 \right) } k^{2} \left\lVert z_{k} - z_{k-1} \right\rVert ^{2} + 2 \left( \alpha - 1 \right) \lambda \left( 1 - \dfrac{4 \lambda}{3 \alpha - 2} \right) \left\lVert z_{k} - z^{\ast} \right\rVert ^{2}.
		\end{split}
		\end{equation}
	\end{enumerate}
\end{lemma}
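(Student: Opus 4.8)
The plan is to handle the two items separately, in both cases reducing to the exact identity for $\cE_{\lambda,k}$ furnished by Lemma~\ref{lem:ene} and then accounting for the two correction terms that turn $\cE_{\lambda,k}$ into $\cG_{\lambda,k}$ in \eqref{split:defi:F}. For item (i) I would start from the identity \eqref{split:dE} and add to it the telescoped differences, between indices $k+1$ and $k$, of the two correction terms appearing in \eqref{split:defi:F}, namely the one carrying $\langle z_{k}-z_{k-1}, F(z_{k})-F(w_{k-1})\rangle$ and the one carrying $\|v_{k}-v_{k-1}\|^{2}$. The first structural move is to pass from $v_{k+1}$ to $\zeta_{k+1}+F(z_{k+1})$ in the term $-4(\alpha-2)\lambda\gamma\langle z_{k+1}-z^{\ast}, v_{k+1}\rangle$: since $v_{k+1}=F(w_{k})+\zeta_{k+1}$ by \eqref{eq:vk}, the residual equals $\langle z_{k+1}-z^{\ast}, F(z_{k+1})-F(w_{k})\rangle$, which by \eqref{split:Lip} obeys $\|F(z_{k+1})-F(w_{k})\|\le\tfrac{1-\varepsilon}{4}\|v_{k+1}-v_{k}\|$. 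Applying Young's inequality with a weight proportional to $(k+1)^{-2}$ then produces exactly the term $\tfrac{(\alpha-1)(\alpha-2)}{\varepsilon(k+1)^{2}}\lambda^{2}\|z_{k+1}-z^{\ast}\|^{2}$ and feeds a multiple of $\|v_{k+1}-v_{k}\|^{2}$ into the budget that will become $\mu_{k}$. The same Lipschitz estimate \eqref{split:Lip}, applied to the cross terms $(k+1)^{2}\langle z_{k+1}-z_{k}, F(z_{k+1})-F(w_{k})\rangle$ arising from the correction difference, is used once more to control these against $\|z_{k+1}-z_{k}\|^{2}$ and $\|v_{k+1}-v_{k}\|^{2}$.

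The remaining work in item (i) is then to collect coefficients. Grouping all multiples of $\langle z_{k+1}-z_{k}, v_{k+1}\rangle$, of $\|z_{k+1}-z_{k}\|^{2}$, of $\|v_{k+1}\|^{2}$, and of $\|v_{k+1}-v_{k}\|^{2}$, one reads off $\eta_{0}k+\eta_{1}$, $\eta_{2}k+\kappa_{0}\sqrt{k}$, $\eta_{3}k+\kappa_{1}\sqrt{k}$, and $-\tfrac{\alpha-2}{\alpha-1}\mu_{k}$ respectively, and one verifies the signs of $\eta_{0},\eta_{1},\eta_{2},\eta_{3},\kappa_{0},\kappa_{1}$ asserted in \eqref{ene:const} by elementary manipulations in $\alpha$ using $\lambda\le\alpha-1$. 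The restriction $k\ge k_{0}=\max\{2,\lceil 1/(\alpha-2)\rceil\}$ enters precisely here: it guarantees $\mu_{k}>0$, i.e.\ that after absorbing the $(k+1)^{2}$-sized contribution from the $z^{\ast}$-residual and the $\sqrt{k}$-sized contributions coming from the third correction difference, the net coefficient of $\|v_{k+1}-v_{k}\|^{2}$ remains negative.

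For item (ii) the strategy is completion of squares combined with the quadratic-form test of Lemma~\ref{lem:quad}. I would expand $\tfrac{1}{2}\|u_{\lambda,k}\|^{2}$ from \eqref{split:defi:u-k-1-lambda} and recombine it with the $\|z_{k}-z^{\ast}\|^{2}$, $\langle z_{k}-z^{\ast}, v_{k}\rangle$ and $\|v_{k}\|^{2}$ contributions of $\cE_{\lambda,k}$ in \eqref{split:defi:E-k} so as to isolate the weighted square $\tfrac{\alpha-2}{4(3\alpha-2)}\|4\lambda(z_{k}-z^{\ast})+2k(z_{k}-z_{k-1})+\tfrac{2(3\alpha-2)}{\alpha-1}\gamma k v_{k}\|^{2}$. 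The indefinite correction term $-\tfrac{2(\alpha-2)}{\alpha-1}\gamma k^{2}\langle z_{k}-z_{k-1}, F(z_{k})-F(w_{k-1})\rangle$ is bounded below via \eqref{split:Lip} and Young's inequality, its $\|v_{k}-v_{k-1}\|^{2}$ part being absorbed by the nonnegative third correction term of $\cG_{\lambda,k}$; what is left is a residual quadratic form in $z_{k}-z^{\ast}$, $z_{k}-z_{k-1}$ and $v_{k}$ whose nonnegativity, together with the two explicit remaining squares, follows by applying Lemma~\ref{lem:quad}(i) to the appropriate $2\times2$ blocks after checking the discriminant inequality $b^{2}-ac\le 0$.

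The main obstacle throughout is the bookkeeping in item (i): tracking every coefficient through the substitution $v_{k+1}\mapsto\zeta_{k+1}+F(z_{k+1})$, the two correction differences, and the several applications of Young's inequality, and confirming that they collapse exactly to the constants in \eqref{ene:const}. The genuinely delicate point is the $k$-dependent choice of the Young weights — the $(k+1)^{-2}$ weight that yields the $1/(k+1)^{2}$ prefactor while routing a $(k+1)^{2}\|v_{k+1}-v_{k}\|^{2}$ term into $\mu_{k}$ — together with the balance between the $\sqrt{k}$-scaled third correction term and the $k^{2}$-scaled indefinite terms that forces $\mu_{k}$ to be eventually positive; this balance is exactly what dictates both the precise form of $\mu_{k}$ and the threshold $k_{0}$.
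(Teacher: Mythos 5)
Your plan for item (ii) matches the paper's argument (completion of squares in $\cE_{\lambda,k}$, then Lemma~\ref{lem:quad} with $(a,b,c)=(\nicefrac{1}{2},\minus 2\gamma,\nicefrac{2\gamma}{L})$ to absorb the indefinite correction term into the $\norm{v_k-v_{k-1}}^2$ part of $\cG_{\lambda,k}$), and your opening moves for item (i) --- telescoping the two $\cG$-corrections onto \eqref{split:dE}, and the $(k+1)^{-2}$-weighted Young step that converts $\sprod{z_{k+1}-z^\ast}{v_{k+1}}$ into $\sprod{z_{k+1}-z^\ast}{\zeta_{k+1}+F(z_{k+1})}$ --- are exactly the paper's \eqref{ene:dF} and \eqref{ene:inn}. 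But there is a genuine gap in item (i): you never address the term $-\frac{2(\alpha-2)}{\alpha-1}\gamma k(k+\alpha)\sprod{z_{k+1}-z_k}{v_{k+1}-v_k}$ coming from \eqref{split:dE}, and the step you propose in its vicinity --- bounding the $(k+1)^2$-scaled cross terms $\sprod{z_{k+1}-z_k}{F(z_{k+1})-F(w_k)}$ by Lipschitz continuity and Young against $\norm{z_{k+1}-z_k}^2$ and $\norm{v_{k+1}-v_k}^2$ --- cannot work quantitatively. Any Young split of an $O(k^2)$-weighted product $\norm{z_{k+1}-z_k}\cdot\norm{v_{k+1}-v_k}$ that leaves only an $O(k)$ coefficient in front of $\norm{z_{k+1}-z_k}^2$, as the target inequality requires via $\eta_2 k+\kappa_0\sqrt k$, necessarily inflates the $\norm{v_{k+1}-v_k}^2$ coefficient to order $k^3$ or worse, which overwhelms the available budget $-\frac{\alpha-2}{\alpha-1}\mu_k\gamma^2\sim -\varepsilon\gamma^2 k^2$. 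The missing idea is the paper's estimate \eqref{ene:mono}: one uses the monotonicity of $F$ and of $N_C$, in the form $\sprod{z_{k+1}-z_k}{(\zeta_{k+1}+F(z_{k+1}))-(\zeta_k+F(z_k))}\ge 0$, to replace $v_{k+1}-v_k$ in that inner product by $-(F(z_{k+1})-F(w_k))+(F(z_k)-F(w_{k-1}))$; the resulting $(k+1)^2$- and $k^2$-scaled terms then cancel \emph{exactly} against the telescoped difference of the first $\cG$-correction (which is the whole reason that correction is built into $\cG_{\lambda,k}$), and only lower-order remainders of size $(\alpha-2)k-1$ and $k$ survive to be handled by Young's inequality, producing $\kappa_0\sqrt k$ and $\kappa_1\sqrt k$. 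This is the one place where the monotonicity of the operator enters the Lyapunov analysis, and omitting it leaves the dominant cross term uncontrolled.

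A secondary inaccuracy: the threshold $k_0=\max\{2,\lceil\nicefrac{1}{(\alpha-2)}\rceil\}$ is not there to make $\mu_k>0$; item (i) holds with $\mu_k$ as a mere definition, and its eventual positivity is a separate statement, Lemma~\ref{lem:trunc}(ii), used only later. The threshold is needed so that $k\ge 2$ makes the index $k-1\ge 1$ legitimate in the telescoped corrections, and so that $(\alpha-2)k-1\ge 0$, which is required to take the square root in the Young step \eqref{ene:Young:1} that generates the $\kappa_0\sqrt k$ term.
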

\begin{proof}
	(i) Let $k \geq 2$ be fixed.
	By the definition of $\cG_{\lambda,k} $ in \eqref{split:defi:F}, we have for every $k \geq 2$
	\begin{equation}\label{ene:dF}
	\begin{split}
		\MoveEqLeft \cG_{\lambda,k+1}  - \cG_{\lambda,k} \\
		&= \cE_{\lambda,k+1} - \cE_{\lambda,k} - \dfrac{2 \left( \alpha - 2 \right)}{\alpha - 1} \gamma \left[ \left( k + 1 \right) ^{2} \left\langle z_{k+1} - z_{k}, F \left( z_{k+1} \right) - F \left( w_{k} \right) \right\rangle \right.\\
		&\left. \phantom{\left( k+1 \right)^{2}} - k^{2} \left\langle z_{k} - z_{k-1}, F \left( z_{k} \right) - F \left( w_{k-1} \right) \right\rangle \right] \\
		&\quad + \dfrac{\left( \alpha - 2 \right) \alpha}{\alpha - 1}  \gamma^{2} \left[ \left( k+1 \right) \sqrt{k+1} \left\lVert v_{k+1} - v_{k} \right\rVert ^{2}
		- k \sqrt{k} \left\lVert v_{k} - v_{k-1} \right\rVert ^{2} \right] \\
		&\quad + \dfrac{\left( \alpha - 2 \right) \left( 1 - \varepsilon \right)}{\alpha - 1} \gamma^{2} \left[\left( k+1 \right)^{2} \left\lVert v_{k+1} - v_{k} \right\rVert ^{2}
		- k^{2} \left\lVert v_{k} - v_{k-1} \right\rVert ^{2} \right] .
	\end{split}
	\end{equation}
	By using the definition of $\eta_{0}, \eta_{1}, \eta_{2}$ and $\eta_{3}$ in \eqref{ene:const}, for every $k \geq 1$ we deduce from \eqref{split:dE} that
	\begin{equation}\label{ene:dE}
	\begin{split}
		\MoveEqLeft \cE_{\lambda,k+1} - \cE_{\lambda,k} \\
		&= \minus 4 \left( \alpha - 2 \right) \lambda \gamma \left\langle z_{k+1} - z^{\ast}, v_{k+1} \right\rangle
		+ \Bigl( \eta_{2} k + 2 \left( \lambda + 1 - \alpha \right) \left( \alpha + 1 \right) \Bigr) \left\lVert z_{k+1} - z_{k} \right\rVert ^{2} \\
		&\quad + 4 \left( \eta_{0} k + \eta_{1} \right) \gamma \left\langle z_{k+1} - z_{k}, v_{k+1} \right\rangle
		- \dfrac{2 \left( \alpha - 2 \right)}{\alpha - 1} \gamma k \left( k + \alpha \right) \left\langle z_{k+1} - z_{k}, v_{k+1} - v_{k} \right\rangle \\
		&\quad - \dfrac{\alpha - 2}{\alpha - 1} k \left( 2k + \alpha \right) \gamma^{2} \left\lVert v_{k+1} - v_{k} \right\rVert ^{2}
		+ \left( 4 \eta_{3} k - \dfrac{\alpha - 2}{\alpha - 1} \left( 2 \alpha^{2} + \alpha - 2 \right) \right) \gamma^{2} \left\lVert v_{k+1} \right\rVert ^{2}\\
		&\leq \minus 4 \left( \alpha - 2 \right) \lambda \gamma \left\langle z_{k+1} - z^{\ast}, v_{k+1} \right\rangle
		- \dfrac{2 \left( \alpha - 2 \right)}{\alpha - 1} \gamma k \left( k + \alpha \right) \left\langle z_{k+1} - z_{k}, v_{k+1} - v_{k} \right\rangle \\
		&\quad + \left( 4 \left( \eta_{0} k + \eta_{1} \right) \gamma \left\langle z_{k+1} - z_{k}, v_{k+1} \right\rangle
		+ \eta_{2} k \left\lVert z_{k+1} - z_{k} \right\rVert ^{2}
		+ 4 \eta_{3} k \gamma^{2} \left\lVert v_{k+1} \right\rVert ^{2} \right) \\
		&\quad - \dfrac{\alpha - 2}{\alpha - 1} k \left( 2k + \alpha \right) \gamma^{2} \left\lVert v_{k+1} - v_{k} \right\rVert ^{2},
	\end{split}
	\end{equation}
	where the inequality comes from the fact that $0 \leq \lambda \leq \alpha - 1$ and $\alpha > 2$.
	Plugging \eqref{ene:dE} into \eqref{ene:dF} yields for every $k \geq 2$
	\begin{equation}\label{ene:pre}
	\begin{split}
		\MoveEqLeft \cG_{\lambda,k+1}  - \cG_{\lambda,k} \\
		&\leq \minus 4 \left( \alpha - 2 \right) \lambda \gamma \left\langle z_{k+1} - z^{\ast}, v_{k+1} \right\rangle
		- \dfrac{\alpha - 2}{\alpha - 1} \left( 2k^{2} + \alpha k \right) \gamma^{2} \left\lVert v_{k+1} - v_{k} \right\rVert ^{2} \\
			&\quad - \dfrac{2 \left( \alpha - 2 \right)}{\alpha - 1} \gamma \left[ \left( k + 1 \right) ^{2} \left\langle z_{k+1} - z_{k}, F \left( z_{k+1} \right) - F \left( w_{k} \right) \right\rangle \right.\\
			&\left. \phantom{\left( k + 1 \right)^{2}}- k^{2} \left\langle z_{k} - z_{k-1}, F \left( z_{k} \right) - F \left( w_{k-1} \right) \right\rangle \right] \\
			&\quad + \dfrac{\left( \alpha - 2 \right) \alpha}{\alpha - 1} \gamma^{2} \left[ \left( k+1 \right) \sqrt{k+1} \left\lVert v_{k+1} - v_{k} \right\rVert ^{2}
			- k \sqrt{k} \left\lVert v_{k} - v_{k-1} \right\rVert ^{2} \right] \\
			&\quad + \dfrac{\left( \alpha - 2 \right) \left( 1 - \varepsilon \right)}{\alpha - 1} \gamma^{2} \left[\left( k+1 \right)^{2} \left\lVert v_{k+1} - v_{k} \right\rVert ^{2}
			- k^{2} \left\lVert v_{k} - v_{k-1} \right\rVert ^{2} \right] \\
			&\quad + \left( 4 \left( \eta_{0} k + \eta_{1} \right) \gamma \left\langle z_{k+1} - z_{k}, v_{k+1} \right\rangle
			+ \eta_{2} k \left\lVert z_{k+1} - z_{k} \right\rVert ^{2}
			+ 4 \eta_{3} k \gamma^{2} \left\lVert v_{k+1} \right\rVert ^{2} \right) \\
			&\quad - \dfrac{2 \left( \alpha - 2 \right)}{\alpha - 1} \gamma k \left( k + \alpha \right) \left\langle z_{k+1} - z_{k}, v_{k+1} - v_{k} \right\rangle.
	\end{split}
	\end{equation}
	Our next aim is to derive upper estimates for the first two terms on the right-hand side of \eqref{ene:pre}, which will eventually simplify the subsequent three terms.
	From the Cauchy-Schwarz inequality and \eqref{split:Lip} we have for every $k \geq 1$
	\begin{equation}\label{ene:inn}
	\begin{split}
		\MoveEqLeft \minus 4 \left( \alpha - 2 \right) \lambda \gamma \left\langle z_{k+1} - z^{\ast}, v_{k+1} \right\rangle
		= \minus 4 \left( \alpha - 2 \right) \lambda \gamma \left\langle z_{k+1} - z^{\ast}, \zeta_{k+1} + F \left( w_{k} \right) \right\rangle \\
		&= \minus 4 \left( \alpha - 2 \right) \lambda \gamma \left\langle z_{k+1} - z^{\ast}, \zeta_{k+1} + F \left( z_{k+1} \right) \right\rangle\\
			&\quad + 4 \left( \alpha - 2 \right) \lambda \gamma \left\langle z_{k+1} - z^{\ast}, F \left( z_{k+1} \right) - F \left( w_{k} \right) \right\rangle \\
		&\leq \minus 4 \left( \alpha - 2 \right) \lambda \gamma \left\langle z_{k+1} - z^{\ast}, \zeta_{k+1} + F \left( z_{k+1} \right) \right\rangle\\
			&\quad + 4 \left( \alpha - 2 \right) \lambda \gamma \left\lVert z_{k+1} - z^{\ast} \right\rVert \left\lVert F \left( z_{k+1} \right) - F \left( w_{k} \right) \right\rVert \\
		&\leq \minus 4 \left( \alpha - 2 \right) \lambda \gamma \left\langle z_{k+1} - z^{\ast}, \zeta_{k+1} + F \left( z_{k+1} \right) \right\rangle\\
			&\quad + 2 \left( \alpha - 2 \right) \lambda \gamma \left\lVert z_{k+1} - z^{\ast} \right\rVert \left\lVert v_{k+1} - v_{k} \right\rVert \\
		&\leq \minus 4 \left( \alpha - 2 \right) \lambda \gamma \left\langle z_{k+1} - z^{\ast}, \zeta_{k+1} + F \left( z_{k+1} \right) \right\rangle
		+ \dfrac{\left( \alpha - 1 \right) \left( \alpha - 2 \right)}{\varepsilon \left( k+1 \right) ^{2}} \lambda ^{2} \left\lVert z_{k+1} - z^{\ast} \right\rVert ^{2} \\
			&\quad + \dfrac{\alpha - 2}{\alpha - 1} \varepsilon \gamma^{2} \left( k+1 \right) ^{2} \left\lVert v_{k+1} - v_{k} \right\rVert ^{2} .
	\end{split}
	\end{equation}
	For $\zeta_{k} \in N_{C} \left( z_{k} \right)$ and $\zeta_{k+1} \in N_{C} \left( z_{k+1} \right)$, the monotonicity of $N_{C}$ and $F$, together with the relation~\eqref{split:d-u} and the fact that for every $ k \geq 1 $
	\begin{equation}
		\zeta_{k} + F(z_{k}) - v_{k} = F(z_{k}) - F(w_{k-1}),
	\end{equation}
	yield for every $k \geq 1$
	\begin{equation}\label{ene:mono}
	\begin{split}
		\MoveEqLeft \minus \dfrac{2 \left( \alpha - 2 \right)}{\alpha - 1} \gamma k \left( k + \alpha \right) \left\langle z_{k+1} - z_{k}, v_{k+1} - v_{k} \right\rangle \\
		&\leq \dfrac{2 \left( \alpha - 2 \right)}{\alpha - 1} \gamma k \left( k + \alpha \right) \left\langle z_{k+1} - z_{k}, \Bigl( \zeta_{k+1} + F \left( z_{k+1} \right) - v_{k+1} \Bigr) - \Bigl( \zeta_{k} + F \left( z_{k} \right) - v_{k} \Bigr) \right\rangle \\
		&= \dfrac{2 \left( \alpha - 2 \right)}{\alpha - 1} \gamma k \left( k + \alpha \right) \left\langle z_{k+1} - z_{k}, \Bigl( F \left( z_{k+1} \right) - F \left( w_{k} \right) \Bigr) - \Bigl( F \left( z_{k} \right) - F \left( w_{k-1} \right) \Bigr) \right\rangle \\
		&= \dfrac{2 \left( \alpha - 2 \right)}{\alpha - 1} \gamma k \left( k + \alpha \right) \left\langle z_{k+1} - z_{k}, F \left( z_{k+1} \right) - F \left( w_{k} \right) \right\rangle \\
			&\quad - \dfrac{2 \left( \alpha - 2 \right)}{\alpha - 1} \gamma k \left( k + \alpha \right) \left\langle z_{k+1} - z_{k}, F \left( z_{k} \right) - F \left( w_{k-1} \right) \right\rangle \\
		&= \dfrac{2 \left( \alpha - 2 \right)}{\alpha - 1} \gamma \left( k + 1 \right) ^{2} \left\langle z_{k+1} - z_{k}, F \left( z_{k+1} \right) - F \left( w_{k} \right) \right\rangle\\
			&\quad - \dfrac{2 \left( \alpha - 2 \right)}{\alpha - 1} \gamma k^{2} \left\langle z_{k} - z_{k-1}, F \left( z_{k} \right) - F \left( w_{k-1} \right) \right\rangle \\
			&\quad + \dfrac{2 \left( \alpha - 2 \right)}{\alpha - 1} \gamma \bigl( \left( \alpha - 2 \right) k - 1 \bigr) \left\langle z_{k+1} - z_{k}, F \left( z_{k+1} \right) - F \left( w_{k} \right) \right\rangle \\
			&\quad + \dfrac{2 \left( \alpha - 2 \right)}{\alpha - 1} \alpha \gamma^{2} k \left\langle v_{k+1}, F \left( z_{k} \right) - F \left( w_{k-1} \right) \right\rangle \\
			&\quad + \dfrac{4 \left( \alpha - 2 \right)}{\alpha - 1} \gamma^{2} k^{2} \left\langle v_{k+1} - v_{k}, F \left( z_{k} \right) - F \left( w_{k-1} \right) \right\rangle .
	\end{split}
	\end{equation}
	By Young's inequality together with \eqref{split:Lip} for every $k \geq \left\lceil \frac{1}{\alpha - 2} \right\rceil$ we obtain
	\begin{equation}\label{ene:Young:1}
	\begin{split}
		\MoveEqLeft \dfrac{2 \left( \alpha - 2 \right)}{\alpha - 1} \gamma \bigl( \left( \alpha - 2 \right) k - 1 \bigr) \left\langle z_{k+1} - z_{k}, F \left( z_{k+1} \right) - F \left( w_{k} \right) \right\rangle \\
		&\leq \dfrac{\alpha - 2}{\alpha - 1} \left( \sqrt{\left( \alpha - 2 \right) k - 1} \left\lVert z_{k+1} - z_{k} \right\rVert ^{2} \right.\\
			&\hspace{2.5cm}\left.+ \gamma^{2} \bigl( \left( \alpha - 2 \right) k - 1 \bigr) \sqrt{\left( \alpha - 2 \right) k - 1} \left\lVert F \left( z_{k+1} \right) - F \left( w_{k} \right) \right\rVert ^{2} \right) \\
		&\leq \dfrac{\alpha - 2}{\alpha - 1} \sqrt{\left( \alpha - 2 \right) k} \left\lVert z_{k+1} - z_{k} \right\rVert ^{2} \\
			&\quad+ \left( \alpha - 2 \right) \sqrt{\alpha - 1} \gamma^{2} \left( k+1 \right) \sqrt{k+1} \left\lVert F \left( z_{k+1} \right) - F \left( w_{k} \right) \right\rVert ^{2} \\
		&\leq \dfrac{\alpha - 2}{\alpha - 1} \sqrt{\left( \alpha - 2 \right) k} \left\lVert z_{k+1} - z_{k} \right\rVert ^{2} \\
			&\quad+ 4 \left( \alpha - 2 \right) \sqrt{\alpha - 1} \gamma^{4} L^{2} \left( k+1 \right) \sqrt{k+1} \left\lVert v_{k+1} - v_{k} \right\rVert ^{2} \\
		&\leq \dfrac{\alpha - 2}{\alpha - 1} \sqrt{\left( \alpha - 2 \right) k} \left\lVert z_{k+1} - z_{k} \right\rVert ^{2} + \left( \alpha - 2 \right) \alpha \gamma^{2} \left( k+1 \right) \sqrt{k+1} \left\lVert v_{k+1} - v_{k} \right\rVert ^{2},
	\end{split}
	\end{equation}
	where in the second estimate we use the fact that $\left( \alpha - 2 \right) k -1 \leq \left( \alpha - 1 \right) \left( k + 1 \right)$, while in the last one we combine $\sqrt{\alpha - 1} \leq \alpha$ and $ \gamma L < \nicefrac{1}{4} < 1$.

	In addition, for every $k \geq 2$ we derive
	\begin{equation}\label{ene:Young:2}
	\begin{split}
		\MoveEqLeft \dfrac{2 \left( \alpha - 2 \right)}{\alpha - 1}  \alpha \gamma^{2} k \left\langle v_{k+1}, F \left( z_{k} \right) - F \left( w_{k-1} \right) \right\rangle \\
		&\leq \dfrac{\alpha - 2}{\alpha - 1} \alpha \gamma^{2} \sqrt{k} \left\lVert v_{k+1} \right\rVert ^{2} + \dfrac{\alpha - 2 }{\alpha - 1} \alpha \gamma^{2} k \sqrt{k} \left\lVert F \left( z_{k} \right) - F \left( w_{k-1} \right) \right\rVert ^{2} \\
		&\leq \dfrac{\alpha - 2}{\alpha - 1} \alpha \gamma^{2} \sqrt{k} \left\lVert v_{k+1} \right\rVert ^{2} + \dfrac{\alpha - 2}{\alpha - 1} \alpha \gamma^{2} k \sqrt{k} \left\lVert v_{k} - v_{k-1} \right\rVert ^{2} \\
		&= \dfrac{\alpha - 2}{\alpha - 1} \alpha \gamma^{2} \sqrt{k} \left\lVert v_{k+1} \right\rVert ^{2} + \dfrac{\alpha - 2}{\alpha - 1} \alpha \gamma^{2} \left( k+1 \right) \sqrt{k+1} \left\lVert v_{k+1} - v_{k} \right\rVert ^{2} \\
			&\quad - \dfrac{\alpha - 2}{\alpha - 1} \alpha \gamma^{2} \left[ \left( k+1 \right) \sqrt{k+1} \left\lVert v_{k+1} - v_{k} \right\rVert ^{2} - k \sqrt{k} \left\lVert v_{k} - v_{k-1} \right\rVert ^{2} \right] ,
	\end{split}
	\end{equation}
	and, by using the Cauchy-Schwarz inequality and \eqref{split:Lip},
	\begin{equation}\label{ene:C-S}
	\begin{split}
		\MoveEqLeft \dfrac{4 \left( \alpha - 2 \right)}{\alpha - 1} \gamma^{2} k^{2} \left\langle v_{k+1} - v_{k}, F \left( z_{k} \right) - F \left( w_{k-1} \right) \right\rangle \\
		&\leq \dfrac{2 \left( \alpha - 2 \right)}{\alpha - 1} \left( 1 - \varepsilon \right) \gamma^{2} k^{2} \left\lVert v_{k+1} - v_{k} \right\rVert \left\lVert v_{k} - v_{k-1} \right\rVert \\
		&\leq \dfrac{\alpha - 2}{\alpha - 1} \left( 1 - \varepsilon \right) \gamma^{2} k^{2} \left( \left\lVert v_{k+1} - v_{k} \right\rVert ^{2} + \left\lVert v_{k} - v_{k-1} \right\rVert ^{2} \right) \\
		&= \dfrac{4 \left( \alpha - 2 \right)}{\alpha - 1} \gamma^{3} Lk^{2} \left( \left\lVert v_{k+1} - v_{k} \right\rVert ^{2} + \left\lVert v_{k} - v_{k-1} \right\rVert ^{2} \right) \\
		&\leq \minus \dfrac{4 \left( \alpha - 2 \right)}{\alpha - 1} \gamma^{3} L \left( \left( k+1 \right) ^{2} \left\lVert v_{k+1} - v_{k} \right\rVert ^{2} - k^{2} \left\lVert v_{k} - v_{k-1} \right\rVert ^{2} \right) \\
			&\quad + \dfrac{8 \left( \alpha - 2 \right)}{\alpha - 1} \gamma^{3} L \left( k+1 \right) ^{2} \left\lVert v_{k+1} - v_{k} \right\rVert ^{2} \\
		&= \minus \dfrac{4 \left( \alpha - 2 \right)}{\alpha - 1} \gamma^{3} L \left( \left( k+1 \right) ^{2} \left\lVert v_{k+1} - v_{k} \right\rVert ^{2} - k^{2} \left\lVert v_{k} - v_{k-1} \right\rVert ^{2} \right) \\
			&\quad + \dfrac{2 \left( \alpha - 2 \right)}{\alpha - 1} \left( 1 - \varepsilon \right) \gamma^{2} \left( k+1 \right) ^{2} \left\lVert v_{k+1} - v_{k} \right\rVert ^{2} ,
	\end{split}
	\end{equation}
	where we want to recall that the first equality comes from \eqref{defi:s-e}.
	By plugging \eqref{ene:Young:1} and \eqref{ene:C-S} into \eqref{ene:mono}, then combining the result with \eqref{ene:inn}, we get after rearranging the terms for every $k \geq k_{0}$
	\begin{equation}\label{ene:sum}
	\begin{split}
		\MoveEqLeft \minus 4 \left( \alpha - 2 \right) \lambda \gamma \left\langle z_{k+1} - z^{\ast}, v_{k+1} \right\rangle
		- \dfrac{2 \left( \alpha - 2 \right)}{\alpha - 1} \gamma k \left( k + \alpha \right) \left\langle z_{k+1} - z_{k}, v_{k+1} - v_{k} \right\rangle \\
		&\leq \dfrac{2 \left( \alpha - 2 \right)}{\alpha - 1} \gamma \left[ \left( k + 1 \right) ^{2} \left\langle z_{k+1} - z_{k}, F \left( z_{k+1} \right) - F \left( w_{k} \right) \right\rangle \right. \\
		&\left. \phantom{ \left( k + 1 \right)^{2} } - k^{2} \left\langle z_{k} - z_{k-1}, F \left( z_{k} \right) - F \left( w_{k-1} \right) \right\rangle \right] \\
			&\quad - \dfrac{\alpha - 2}{\alpha - 1} \alpha \gamma^{2} \left[ \left( k+1 \right) \sqrt{k+1} \left\lVert v_{k+1} - v_{k} \right\rVert ^{2}
			- k \sqrt{k} \left\lVert v_{k} - v_{k-1} \right\rVert ^{2} \right] \\
			&\quad - \dfrac{4 \left( \alpha - 2 \right)}{\alpha - 1} \gamma^{3} L \left[\left( k+1 \right)^{2} \left\lVert v_{k+1} - v_{k} \right\rVert ^{2}
			- k^{2} \left\lVert v_{k} - v_{k-1} \right\rVert ^{2} \right] \\
			&\quad - 4 \left( \alpha - 2 \right) \lambda \gamma \left\langle z_{k+1} - z^{\ast}, \zeta_{k+1} + F \left( z_{k+1} \right) \right\rangle\\
			&\quad + \dfrac{\alpha - 2}{\alpha - 1} \left( \left( 2k^{2} + \alpha k \right) - \mu_{k} \right) \gamma^{2} \left\lVert v_{k+1} - v_{k} \right\rVert ^{2} \\
			&\quad + \dfrac{1}{\alpha - 1} \left( \alpha - 2 \right) \sqrt{\left( \alpha - 2 \right) k} \left\lVert z_{k+1} - z_{k} \right\rVert ^{2} + \dfrac{\alpha - 2}{\alpha - 1} \alpha \gamma^{2} \sqrt{k} \left\lVert v_{k+1} \right\rVert ^{2} \\
			&\quad + \dfrac{1}{\varepsilon \left( k+1 \right) ^{2}} \left( \alpha - 1 \right) \left( \alpha - 2 \right) \lambda ^{2} \left\lVert z_{k+1} - z^{\ast} \right\rVert ^{2},
	\end{split}
	\end{equation}
	where we set
	\begin{align*}
		\mu_{k}
			&:= \left( 2k^{2} + \alpha k \right) - \varepsilon \left( k+1 \right) ^{2} - \left( \alpha - 1 \right) \alpha \left( k+1 \right) \sqrt{k+1} - \alpha \left( k+1 \right) \sqrt{k+1} \\
			&\quad - 2 \left( 1 - \varepsilon \right) \left( k+1 \right) ^{2} \\
			&= \varepsilon \left( k+1 \right) ^{2} + \left( \alpha - 4 \right) k - 2 - \alpha^{2} \left( k+1 \right) \sqrt{k+1} \\
			&= \left( k+1 \right) \left( \varepsilon \left( k+1 \right) + \alpha^{2} \sqrt{k+1} + \alpha - 4 \right) - \left( \alpha - 2 \right) .
	\end{align*}
	Finally, summing up \eqref{ene:pre} and \eqref{ene:sum}, we obtain the desired estimate.

	(ii)
	Observe that
	\begin{equation}
	\begin{split}
		\MoveEqLeft \dfrac{2 \left( \alpha - 2 \right)}{\alpha - 1} \lambda \gamma k \left\langle z_{k} - z^{\ast}, v_{k} \right\rangle
		+ \dfrac{\left( \alpha - 2 \right) \left( 3 \alpha - 2 \right)}{2 \left( \alpha - 1 \right)^{2}}  \gamma^{2} k^{2} \left\lVert v_{k} \right\rVert ^{2} \nonumber \\
			&= \dfrac{\alpha - 2}{3 \alpha - 2} \left( \dfrac{2 \left( 3 \alpha - 2 \right)}{\alpha - 1} \lambda \gamma k \left\langle z_{k} - z^{\ast}, v_{k} \right\rangle + \dfrac{ \left( 3 \alpha - 2 \right)^{2}}{2 \left( \alpha - 1 \right)^{2}} \gamma^{2} k^{2} \left\lVert v_{k} \right\rVert ^{2} \right) \nonumber \\
			&= \dfrac{1}{3 \alpha - 2} \left( \alpha - 2 \right) \left( \dfrac{1}{2} \left\lVert 2 \lambda \left( z_{k} - z^{\ast} \right) + \dfrac{3 \alpha - 2}{\alpha - 1} \gamma k v_{k} \right\rVert ^{2} - 2 \lambda^{2} \left\lVert z_{k} - z^{\ast} \right\rVert ^{2} \right).
	\end{split}
	\end{equation}
	By the definition of $u_{\lambda,k}$ in \eqref{split:defi:u-k-1-lambda} and by using the identity
	\begin{equation}
		\left\lVert x \right\rVert ^{2} + \left\lVert y \right\rVert ^{2}
		= \frac{1}{2} \left( \left\lVert x + y \right\rVert ^{2} + \left\lVert x - y \right\rVert ^{2} \right)
		\quad \forall x, y \in \R^{d},
	\end{equation}
	we deduce that for every $k \geq 1$
	\begin{equation}
	\begin{split}
		\MoveEqLeft \cE_{\lambda,k}
		=\dfrac{1}{2} \left\lVert u_{\lambda,k} \right\rVert ^{2}
		+ 2 \lambda \left( \alpha - 1 - \lambda \right) \left\lVert z_{k} - z^{\ast} \right\rVert ^{2}
		+ \dfrac{2 \left( \alpha - 2 \right)}{\alpha - 1} \lambda \gamma k \left\langle z_{k} - z^{\ast}, v_{k} \right\rangle \\
			& \quad + \dfrac{\alpha - 2}{\alpha - 1} \gamma^{2} k \left( \dfrac{1}{2 \left( \alpha - 1 \right)} \left( 3 \alpha - 2 \right) k + \alpha \right) \left\lVert v_{k} \right\rVert ^{2} \nonumber \\
		&= \dfrac{1}{2} \left\lVert 2 \lambda \left( z_{k} - z^{\ast} \right) + 2k \left( z_{k} - z_{k-1} \right) + \dfrac{3 \alpha - 2}{\alpha - 1} \gamma k v_{k} \right\rVert ^{2}\\
			&\quad+ 2 \left( \alpha - 1 \right) \lambda \left( 1 - \dfrac{4 \lambda}{3 \alpha - 2} \right) \left\lVert z_{k} - z^{\ast} \right\rVert ^{2}
			+ \dfrac{\alpha - 2}{\alpha - 1} \alpha \gamma^{2} k \left\lVert v_{k} \right\rVert ^{2} \\
			&\quad+ \dfrac{\alpha - 2}{2 \left( 3 \alpha - 2 \right)} \left\lVert 2 \lambda \left( z_{k} - z^{\ast} \right) + \dfrac{3 \alpha - 2}{\alpha - 1} \gamma k v_{k} \right\rVert ^{2} \\
		&= \dfrac{\alpha}{3 \alpha - 2} \left\lVert 2 \lambda \left( z_{k} - z^{\ast} \right) + 2k \left( z_{k} - z_{k-1} \right) + \dfrac{3 \alpha - 2}{\alpha - 1} \gamma k v_{k} \right\rVert ^{2} \\
			&\quad + 2 \left( \alpha - 1 \right) \lambda \left( 1 - \dfrac{4 \lambda}{3 \alpha - 2} \right) \left\lVert z_{k} - z^{\ast} \right\rVert ^{2}
			+ \dfrac{\alpha - 2}{\alpha - 1} \alpha \gamma^{2} k \left\lVert v_{k} \right\rVert ^{2} \\
			&\quad + \dfrac{\alpha - 2}{4 \left( 3 \alpha - 2 \right)} \left\lVert 4 \lambda \left( z_{k} - z^{\ast} \right) + 2k \left( z_{k} - z_{k-1} \right) + \dfrac{2 \left( 3 \alpha - 2 \right)}{\alpha - 1} \gamma k v_{k} \right\rVert ^{2} \\
			&\quad + \dfrac{\alpha - 2}{3 \alpha - 2} k^{2} \left\lVert z_{k} - z_{k-1} \right\rVert ^{2} .
	\end{split}
	\end{equation}
	Consequently,
	\begin{align*}
	\cG_{\lambda,k}
	& = \cE_{\lambda,k} - \dfrac{2 \left( \alpha - 2 \right)}{\alpha - 1} \gamma k^{2} \left\langle z_{k} - z_{k-1}, F \left( z_{k} \right) - F \left( w_{k-1} \right) \right\rangle \nonumber \\
	& \qquad + \dfrac{\alpha - 2}{\alpha - 1} \gamma^{2} k \sqrt{k} \left( \left( 1 - \varepsilon \right) \sqrt{k} + \alpha \right) \left\lVert v_{k} - v_{k-1} \right\rVert ^{2} \nonumber \\
	& \geq \dfrac{\alpha - 2}{4 \left( 3 \alpha - 2 \right)} \left\lVert 4 \lambda \left( z_{k} - z^{\ast} \right) + 2k \left( z_{k} - z_{k-1} \right) + \dfrac{2 \left( 3 \alpha - 2 \right)}{\alpha - 1} \gamma k v_{k} \right\rVert ^{2} \nonumber \\
	& \qquad + \dfrac{\alpha - 2}{3 \alpha - 2} k^{2} \left\lVert z_{k} - z_{k-1} \right\rVert ^{2} + 2 \left( \alpha - 1 \right) \lambda \left( 1 - \dfrac{4 \lambda}{3 \alpha - 2} \right) \left\lVert z_{k} - z^{\ast} \right\rVert ^{2} \nonumber \\
	& \qquad - \dfrac{2 \left( \alpha - 2 \right)}{\alpha - 1} \gamma k^{2} \left\langle z_{k} - z_{k-1}, F \left( z_{k} \right) - F \left( w_{k-1} \right) \right\rangle \nonumber \\
	& \qquad + \dfrac{\alpha - 2}{\alpha - 1} \gamma^{2} k \sqrt{k} \left( 4 \gamma L \sqrt{k} + \alpha \right) \left\lVert v_{k} - v_{k-1} \right\rVert ^{2} .
	\end{align*}
	Now we use relation \eqref{split:Lip} and apply Lemma~\ref{lem:quad} with $\left( a, b, c \right) := \Bigl( \nicefrac{1}{2}, \minus 2 \gamma, \nicefrac{2 \gamma}{L} \Bigr)$ to verify that for every $k \geq 1$
	\begin{align*}
		\MoveEqLeft \dfrac{1}{2} \left\lVert z_{k} - z_{k-1} \right\rVert ^{2} - 4 \gamma \left\langle z_{k} - z_{k-1}, F \left( z_{k} \right) - F \left( w_{k-1} \right) \right\rangle + 8 \gamma^{3} L \left\lVert v_{k} - v_{k-1} \right\rVert ^{2} \\
		&\geq \dfrac{1}{2} \left\lVert z_{k} - z_{k-1} \right\rVert ^{2} - 4 \gamma \left\langle z_{k} - z_{k-1}, F \left( z_{k} \right) - F \left( w_{k-1} \right) \right\rangle + \dfrac{2 \gamma}{L} \left\lVert F \left( z_{k} \right) - F \left( w_{k-1} \right) \right\rVert ^{2}\\
		&\geq 0.
	\end{align*}
	Combining the last two estimates, for every $ k \geq 1 $ one can easily conclude that
	\begin{align*}
		\MoveEqLeft \cG_{\lambda,k}
		\geq \dfrac{\alpha - 2}{4 \left( 3 \alpha - 2 \right)} \left\lVert 4 \lambda \left( z_{k} - z^{\ast} \right) + 2k \left( z_{k} - z_{k-1} \right) + \dfrac{2 \left( 3 \alpha - 2 \right)}{\alpha - 1} \gamma k v_{k} \right\rVert ^{2} \nonumber \\
			&\quad + \left( \alpha - 2 \right) \left( \dfrac{1}{3 \alpha - 2} - \dfrac{1}{4 \left( \alpha - 1 \right)} \right) k^{2} \left\lVert z_{k} - z_{k-1} \right\rVert ^{2}\\
			&\quad + 2 \left( \alpha - 1 \right) \lambda \left( 1 - \dfrac{4 \lambda}{3 \alpha - 2} \right) \left\lVert z_{k} - z^{\ast} \right\rVert ^{2} \nonumber \\
		&= \dfrac{\alpha - 2}{4 \left( 3 \alpha - 2 \right)} \left\lVert 4 \lambda \left( z_{k} - z^{\ast} \right) + 2k \left( z_{k} - z_{k-1} \right) + \dfrac{2 \left( 3 \alpha - 2 \right)}{\alpha - 1} \gamma k v_{k} \right\rVert ^{2} \nonumber \\
			&\quad + \dfrac{\left( \alpha - 2 \right) ^{2}}{4 \left( 3 \alpha - 2 \right) \left( \alpha - 1 \right) } k^{2} \left\lVert z_{k} - z_{k-1} \right\rVert ^{2} + 2 \left( \alpha - 1 \right) \lambda \left( 1 - \dfrac{4 \lambda}{3 \alpha - 2} \right) \left\lVert z_{k} - z^{\ast} \right\rVert ^{2},
	\end{align*}
	which is the desired inequality.
	\qedhere
\end{proof}

The following lemma will be helpful in the main proof.
\begin{lemma}
	\label{lem:trunc}
	The following statements are true:
	\begin{enumerate}[(i)]
		\item there exist two parameters
		\begin{equation}
		\label{trunc:fea}
		0 \leq \underline{\lambda} \left( \alpha \right) < \overline{\lambda} \left( \alpha \right) \leq \dfrac{3 \alpha - 2}{4}
		\end{equation}
		such that for every $\lambda$ satisfying $\underline{\lambda} \left( \alpha \right) < \lambda < \overline{\lambda} \left( \alpha \right)$ one can find an integer $k_{\lambda} \geq 1$ with the property that the following inequality holds for every $k \geq k_{\lambda}$
		\begin{equation}\label{Rk}
		\begin{split}
			\MoveEqLeft R_{k} := \sqrt{\dfrac{5 \alpha - 2}{2 \left( 3 \alpha - 2 \right)}} \Bigl( \eta_{2} k + \kappa_{0} \sqrt{k} \Bigr) \left\lVert z_{k+1} - z_{k} \right\rVert ^{2}
			+ 4 \gamma \Bigl( \eta_{0} k + \eta_{1} \Bigr) \left\langle z_{k+1} - z_{k}, v_{k+1} \right\rangle \\
			&\quad + 4 \sqrt{\dfrac{5 \alpha - 2}{2 \left( 3 \alpha - 2 \right)}} \gamma^{2} \Bigl( \eta_{3} k + \kappa_{1} \sqrt{k} \Bigr) \left\lVert v_{k+1} \right\rVert ^{2}
			\leq 0;
		\end{split}
		\end{equation}

		\item
		there exists a positive integer $k_{\varepsilon}$ such that for every $k \geq k_{\varepsilon}$ we have
		\begin{equation}
		\label{muk}
		\mu_{k} \geq \dfrac{\varepsilon}{2} \left( k+1 \right)^{2} .
		\end{equation}
	\end{enumerate}
\end{lemma}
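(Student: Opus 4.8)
The plan is to handle the two parts separately: part (ii) is a routine asymptotic estimate, while part (i) rests on reading $R_k$ as a quadratic form in two vectors and invoking Lemma~\ref{lem:quad}. For part (i), I would abbreviate $x := z_{k+1} - z_{k}$ and $y := v_{k+1}$ and write $R_{k} = a_{k} \norm{x}^{2} + 2 b_{k} \sprod{x}{y} + c_{k} \norm{y}^{2}$, where $a_{k} := \sqrt{\tfrac{5\alpha-2}{2(3\alpha-2)}}\,(\eta_{2} k + \kappa_{0}\sqrt{k})$, $b_{k} := 2\gamma(\eta_{0} k + \eta_{1})$ and $c_{k} := 4\sqrt{\tfrac{5\alpha-2}{2(3\alpha-2)}}\,\gamma^{2}(\eta_{3} k + \kappa_{1}\sqrt{k})$, with the constants taken from \eqref{ene:const}. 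By Lemma~\ref{lem:quad}~\eqref{quad:vec} it suffices to produce $k_{\lambda}$ such that $a_{k} < 0$ and $b_{k}^{2} - a_{k} c_{k} \leq 0$ for all $k \geq k_{\lambda}$. The sign of $a_{k}$ is immediate: since the interval we construct will satisfy $\overline{\lambda}(\alpha) \leq \tfrac{3\alpha-2}{4} < \alpha - 1$, any admissible $\lambda$ gives $\eta_{2} = 4(\lambda+1-\alpha) < 0$, so $a_{k} < 0$ as soon as the $\eta_{2} k$ term dominates $\kappa_{0}\sqrt{k}$.

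The crux is the condition $b_{k}^{2} - a_{k} c_{k} \leq 0$. The weights in $R_{k}$ are chosen precisely so that $a_{k} c_{k}$ carries the factor $\tfrac{5\alpha-2}{2(3\alpha-2)}$; expanding shows that the leading $k^{2}$-coefficient of $b_{k}^{2} - a_{k} c_{k}$ equals, up to the positive factor $4\gamma^{2}$, the quantity $\eta_{0}^{2} - \tfrac{5\alpha-2}{2(3\alpha-2)}\eta_{2}\eta_{3}$, all remaining contributions being $O(k^{3/2})$. Hence it is enough to force the strict inequality $\eta_{0}^{2} < \tfrac{5\alpha-2}{2(3\alpha-2)}\eta_{2}\eta_{3}$, for then $b_{k}^{2} - a_{k} c_{k} \to -\infty$ like $k^{2}$ and is eventually nonpositive. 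Substituting the definitions \eqref{ene:const} and setting $s := \lambda + 1 - \alpha$, this reduces to
\begin{equation}
	Q(s) := 4 s^{2} + \frac{(\alpha-2)(3\alpha-2)}{\alpha-1}\,s + \frac{\alpha^{2}(\alpha-2)^{2}}{4(\alpha-1)^{2}} < 0 .
\end{equation}
This is an upward parabola whose discriminant simplifies, via $(3\alpha-2)^{2} - 4\alpha^{2} = (5\alpha-2)(\alpha-2)$, to $\tfrac{(\alpha-2)^{3}(5\alpha-2)}{(\alpha-1)^{2}} > 0$; thus $Q$ has two real roots $s_{-} < s_{+}$, and the admissible values of $\lambda$ form the open interval $(\lambda_{-},\lambda_{+})$ with $\lambda_{\pm} := s_{\pm} + \alpha - 1$.

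It remains to verify that this interval lies inside $[0,\tfrac{3\alpha-2}{4}]$, which I expect to be the main obstacle (pure bookkeeping, but with two regimes to reconcile). Here I would evaluate $Q$ at the two candidate endpoints. At $\lambda = 0$ one gets $Q = \alpha^{2}\bigl(1 + \tfrac{(\alpha-2)^{2}}{4(\alpha-1)^{2}}\bigr) > 0$, while the vertex of $Q$ lies at $\lambda_{\mathrm{vertex}} = \tfrac{5\alpha^{2} - 8\alpha + 4}{8(\alpha-1)} > 0$; together these place the whole admissible interval to the right of $0$, so $0 < \lambda_{-} < \lambda_{+}$. At $\lambda = \tfrac{3\alpha-2}{4}$ one computes $Q = -\tfrac{(\alpha-2)^{2}(\alpha^{2}-3\alpha+1)}{4(\alpha-1)^{2}}$, and since $\tfrac{3\alpha-2}{4} - \lambda_{\mathrm{vertex}} = \tfrac{\alpha(\alpha-2)}{8(\alpha-1)} > 0$ this point sits to the right of the vertex. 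Consequently, setting $\underline{\lambda}(\alpha) := \lambda_{-}$ and $\overline{\lambda}(\alpha) := \min\{\lambda_{+}, \tfrac{3\alpha-2}{4}\}$ yields $0 \leq \underline{\lambda}(\alpha) < \overline{\lambda}(\alpha) \leq \tfrac{3\alpha-2}{4}$ in both regimes $\alpha \lessgtr \tfrac{3+\sqrt{5}}{2}$ (the sign of $\alpha^{2}-3\alpha+1$ deciding whether $\tfrac{3\alpha-2}{4}$ falls inside $(\lambda_{-},\lambda_{+})$ or beyond $\lambda_{+}$), and every $\lambda$ strictly between them satisfies $Q(s) < 0$, which delivers $R_{k} \leq 0$ for all $k \geq k_{\lambda}$.

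Finally, part (ii) is a direct estimate: from the closed form $\mu_{k} = (k+1)\bigl(\varepsilon(k+1) + \alpha^{2}\sqrt{k+1} + (\alpha-4)\bigr) - (\alpha-2)$ in \eqref{ene:const}, subtracting $\tfrac{\varepsilon}{2}(k+1)^{2}$ leaves $\tfrac{\varepsilon}{2}(k+1)^{2} + \alpha^{2}(k+1)^{3/2} + (\alpha-4)(k+1) - (\alpha-2)$, whose dominant term $\tfrac{\varepsilon}{2}(k+1)^{2}$ is positive and outgrows the only possibly negative contributions $(\alpha-4)(k+1)$ and $-(\alpha-2)$; hence there exists $k_{\varepsilon}$ with $\mu_{k} \geq \tfrac{\varepsilon}{2}(k+1)^{2}$ for all $k \geq k_{\varepsilon}$.
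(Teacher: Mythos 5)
Your proposal is correct and follows essentially the same route as the paper: read $R_{k}$ as a quadratic form in $z_{k+1}-z_{k}$ and $v_{k+1}$, apply Lemma~\ref{lem:quad}\eqref{quad:vec}, reduce the discriminant condition to the sign of the leading coefficient $\eta_{0}^{2} - \frac{5\alpha-2}{2(3\alpha-2)}\eta_{2}\eta_{3}$, and analyse the resulting quadratic in $\xi = \lambda+1-\alpha$, whose discriminant $(\alpha-2)^{3}(5\alpha-2)$ (up to positive factors) the paper computes identically. The only cosmetic difference is in the endpoint bookkeeping — you evaluate $Q$ at $\lambda=0$ and $\lambda=\frac{3\alpha-2}{4}$ and use the vertex location, whereas the paper writes out the roots $\xi_{1}(\alpha),\xi_{2}(\alpha)$ explicitly and bounds them directly — but the resulting $\underline{\lambda}(\alpha)$ and $\overline{\lambda}(\alpha)$ coincide.
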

\begin{proof}
	(i) For the quadratic expression in $R_{k}$ we calculate
	\begin{align*}%\label{nonneg:Delta-k}
		\dfrac{\Delta_{k}'}{4 \gamma^{2}}
		&:= \left( \eta_{0} k + \eta_{1} \right) ^{2} - \dfrac{5 \alpha - 2}{2 \left( 3 \alpha - 2 \right)} k \Bigl( \eta_{2} \sqrt{k} + \kappa_{0} \Bigr) \Bigl( \eta_{3} \sqrt{k} + \kappa_{1} \Bigr) \nonumber \\
		&= \left( \eta_{0}^{2} - \dfrac{5 \alpha - 2}{2 \left( 3 \alpha - 2 \right)} \eta_{2} \eta_{3} \right) k^{2} - \dfrac{5 \alpha - 2}{2 \left( 3 \alpha - 2 \right)} \left( \eta_{2} \kappa_{1} + \kappa_{0} \eta_{3} \right) k \sqrt{k} \\
			&\quad+ \left( 2 \eta_{0} \eta_{1} - \dfrac{5 \alpha - 2}{2 \left( 3 \alpha - 2 \right)} \kappa_{0} \kappa_{1} \right) k + \eta_{1}^{2} .
	\end{align*}
	Since $\left( \eta_{0}^{2} - \frac{5 \alpha - 2}{2 \left( 3 \alpha - 2 \right)} \eta_{2} \eta_{3} \right) k^{2}$ is the dominant term in the above polynomial, it suffices to guarantee that
	\begin{equation}
	\label{nonneg:gamma}
	\eta_{0}^{2} - \frac{5 \alpha - 2}{2 \left( 3 \alpha - 2 \right)} \eta_{2} \eta_{3} < 0
	\end{equation}
	holds in order to ensure the existence of some integer $k_{\lambda} \geq 1$ such that $\Delta_{k}' \leq 0$ for every $k \geq k_{\lambda}$ and to obtain from here, due to Lemma~\ref{lem:quad}~\eqref{quad:vec}, that $R_{k} \leq 0$ for every $k \geq k_{\lambda}$.

	It remains to show that there exists a choice of $\lambda$ for which \eqref{nonneg:gamma} is true. We set $\xi := \lambda + 1 - \alpha \leq 0$ and get
	\begin{align*}
		\eta_{0}
		&= \dfrac{1}{2 \left( \alpha - 1 \right)} \left( 4 \left( \alpha - 1 \right) \left( \lambda + 1 - \alpha \right) - \alpha \left( \alpha - 2 \right) \right) \\
		&= \dfrac{1}{2 \left( \alpha - 1 \right)} \left( 4 \left( \alpha - 1 \right) \xi - \alpha \left( \alpha - 2 \right) \right), \\
		\eta_{2} \eta_{3} 	& = - \dfrac{2}{\alpha - 1} \left( \alpha - 2 \right) \left( 3 \alpha - 2 \right) \left( \lambda + 1 - \alpha \right) = - \dfrac{2}{\alpha - 1} \left( \alpha - 2 \right) \left( 3 \alpha - 2 \right) \xi.
	\end{align*}
	This means that we have to guarantee that there exists a choice for $\xi$ satisfying
	\begin{align*}
		\MoveEqLeft \eta_{0}^{2} - \frac{5 \alpha - 2}{2 \left( 3 \alpha - 2 \right)} \eta_{2} \eta_{3}\\
		& = \dfrac{1}{4 \left( \alpha - 1 \right) ^{2}} \left( \left( 4 \left( \alpha - 1 \right) \xi - \alpha \left(\alpha -2 \right) \right) ^{2} + 4 \left( 5 \alpha - 2 \right) \left( \alpha - 1 \right) \left( \alpha - 2 \right) \xi \right) \nonumber \\
		& = \dfrac{1}{4 \left( \alpha - 1 \right) ^{2}} \left( 16 \left( \alpha - 1 \right) ^{2} \xi^{2} + 4 \left( \alpha - 1 \right) \left( \alpha - 2 \right) \left( 3 \alpha - 2 \right) \xi + \alpha ^{2} \left( \alpha - 2 \right) ^{2} \right) < 0,
	\end{align*}
	which is nothing else than
	\begin{equation}
	\label{trunc:omega-a}
	16 \left( \alpha - 1 \right) ^{2} \xi^{2} + 4 \left( \alpha - 1 \right) \left( \alpha - 2 \right) \left( 3 \alpha - 2 \right) \xi + \alpha ^{2} \left( \alpha - 2 \right) ^{2} < 0 .
	\end{equation}
	A direct computation shows that
	\begin{align*}
	\Delta_{\xi}
	:= 16 \left( \alpha - 1 \right) ^{2} \left( \alpha - 2 \right) ^{2} \left( \left( 3 \alpha - 2 \right) ^{2} - 4 \alpha ^{2} \right)
	= 16 \left( \alpha - 1 \right) ^{2} \left( \alpha - 2 \right) ^{3} \left( 5 \alpha - 2 \right) > 0.
	\end{align*}
	Hence, in order to get \eqref{trunc:omega-a}, we have to choose $\xi$ between the two roots of the quadratic function arising in this formula, in other words
	\begin{align*}
	\xi_{1} \left( \alpha \right) & := \dfrac{1}{32 \left( \alpha - 1 \right) ^{2}} \left( - 4 \left( \alpha - 1 \right) \left( \alpha - 2 \right) \left( 3 \alpha - 2 \right) - \sqrt{\Delta_{\xi}} \right) \nonumber \\
	& = - \dfrac{1}{8 \left( \alpha - 1 \right)} \left( \alpha - 2 \right) \left( 3 \alpha - 2  + \sqrt{\left( \alpha - 2 \right) \left( 5 \alpha - 2 \right)} \right) \nonumber \\
	& < \xi = \lambda + 1 - \alpha
	< \xi_{2} \left( \alpha \right) := \dfrac{1}{32 \left( \alpha - 1 \right) ^{2}} \left( - 4 \left( \alpha - 1 \right) \left( \alpha - 2 \right) \left( 3 \alpha - 2 \right) + \sqrt{\Delta_{\xi}} \right) \nonumber \\
	& = - \dfrac{1}{8 \left( \alpha - 1 \right)} \left( \alpha - 2 \right) \left(3 \alpha - 2 - \sqrt{\left( \alpha - 2 \right) \left( 5 \alpha - 2 \right)} \right) .
	\end{align*}
	Obviously $\xi_{1} \left( \alpha \right) < 0$ and from Vieta's formula $\xi_{1} \left( \alpha \right) \cdot \xi_{2} \left( \alpha \right) = \frac{\alpha^{2} \left(\alpha -2\right) ^{2}}{16 \left( \alpha - 1 \right) ^{2}}$, it follows that we must have $\xi_{2} \left( \alpha \right) < 0$ as well.

	Therefore, going back to $\lambda$, in order to be sure that $\eta_{0}^{2} - \frac{5\alpha-2}{2 \left( 3\alpha -2 \right)} \eta_{2} \eta_{3} < 0$ this must be chosen such that
	\begin{equation*}
	\alpha - 1 + \xi_{1} \left( \alpha \right) < \lambda < \alpha - 1 + \xi_{2} \left( \alpha \right) .
	\end{equation*}
	Next we will show that
	\begin{equation}
	\label{trunc:check}
	0 < \alpha - 1 - \dfrac{1}{8 \left( \alpha - 1 \right)} \left( \alpha - 2 \right) \left( 3 \alpha - 2 \right) < \dfrac{3 \alpha}{4} - \dfrac{1}{2}.
	\end{equation}
	Indeed, the inequality on the left-hand side follows immediately, since
	\begin{equation*}
	\begin{split}
		\MoveEqLeft \alpha - 1 - \dfrac{1}{8 \left( \alpha - 1 \right)} \left( \alpha - 2 \right) \left( 3 \alpha - 2 \right)
		= \dfrac{1}{8 \left( \alpha - 1 \right)} \left( 5 \alpha^{2} - 8 \alpha + 4 \right)\\
		&= \dfrac{1}{8 \left( \alpha - 1 \right)} \left( \alpha^{2} + 4 \left( \alpha - 1 \right) ^{2} \right)
		> 0 .
	\end{split}
	\end{equation*}
	Using this relation, one can notice that the inequality on the right hand side of \eqref{trunc:check} can be equivalently written as
	\begin{equation*}
	5 \alpha^{2} - 8 \alpha + 4 < 2 \left( \alpha - 1 \right) \left( 3 \alpha - 2 \right) \Leftrightarrow 0 < \alpha^{2} - 2 \alpha = \alpha \left( \alpha - 2 \right),
	\end{equation*}
	which is true as $\alpha >2$.

	From \eqref{trunc:check} we immediately deduce that
	\begin{equation*}
	0 < \alpha - 1 + \xi_{2} \left( \alpha \right)
	\quad \textrm{ and } \quad
	\alpha - 1 + \xi_{1} \left( \alpha \right) < \dfrac{3 \alpha}{4} - \dfrac{1}{2}.
	\end{equation*}
	This allows us to choose $ \underline{\lambda} < \overline{\lambda} $, where
	\begin{equation}
	\begin{split}
		\underline{\lambda} \left( \alpha \right)
			&:= \alpha - 1 + \xi_{1} \left( \alpha \right)\\
			&=  \dfrac{1}{8 \left( \alpha - 1 \right)} \alpha^{2} + \dfrac{1}{2} \left( \alpha - 1 \right) - \dfrac{1}{8 \left( \alpha - 1 \right)} \left( \alpha - 2 \right) \sqrt{\left( \alpha - 2 \right) \left( 5 \alpha - 2 \right)}  \nonumber \\
		\overline{\lambda} \left( \alpha \right)
			&:= \min \left\lbrace \dfrac{3 \alpha}{4} - \dfrac{1}{2},   \alpha - 1 + \xi_{2} \left( \alpha \right) \right\rbrace  \\
			&= \min \left\lbrace \dfrac{3 \alpha}{4} - \dfrac{1}{2}, \dfrac{1}{8 \left( \alpha - 1 \right)} \alpha^{2} + \dfrac{1}{2} \left( \alpha - 1 \right) + \dfrac{1}{8 \left( \alpha - 1 \right)} \left( \alpha - 2 \right) \sqrt{\left( \alpha - 2 \right) \left( 5 \alpha - 2 \right)} \right\rbrace,
	\end{split}
	\end{equation}
	since
	\begin{equation*}
	\dfrac{1}{8 \left( \alpha - 1 \right)} \alpha^{2} + \dfrac{1}{2} \left( \alpha - 1 \right) - \dfrac{1}{8 \left( \alpha - 1 \right)} \left( \alpha - 2 \right) \sqrt{\left( \alpha - 2 \right) \left( 5 \alpha - 2 \right)} > 0 .
	\end{equation*}
	Indeed, as $\left( \alpha - 1 \right) \sqrt{\alpha - 1} > \left( \alpha - 2 \right) \sqrt{\alpha - 2}$ and $4 \sqrt{\alpha - 1} > \sqrt{5 \alpha - 2}$ we can easily deduce that
	\begin{equation*}
	\alpha^{2} + 4 \left( \alpha - 1 \right) ^{2} > 4 \left( \alpha - 1 \right) ^{2} > \left( \alpha - 2 \right) \sqrt{\left( \alpha - 2 \right) \left( 5 \alpha - 2 \right)}
	\end{equation*}
	and the claim follows.

	In conclusion, choosing $\lambda$ to satisfy $\underline{\lambda} \left( \alpha \right) < \lambda < \overline{\lambda} \left( \alpha \right)$, we have $$\eta_{0}^{2} - \frac{5 \alpha - 2}{2 \left( 3 \alpha - 2 \right)} \eta_{2} \eta_{3} < 0$$ and therefore there exists some integer $k_{\lambda} \geq 1$ such that $R_k \leq 0$ for every $k_{\lambda}$.

	(ii) For every $k \geq 1$ we have
	\begin{align*}
	\mu_{k} - \dfrac{\varepsilon}{2} \left( k+1 \right) ^{2}
	= & \dfrac{\varepsilon}{2} \left( k+1 \right) ^{2} + \alpha^{2} \left( k+1 \right) \sqrt{k+1} + \left( \alpha - 4 \right) \left( k+1 \right) - \left( \alpha - 2 \right),
	\end{align*}
	and the conclusion is obvious.
	\qedhere
\end{proof}

The following proposition plays a key role in proving the convergence rates in Proposition~\ref{thm:o-rates} which will be used to prove Theorem~\ref{thm:conv}.
\begin{proposition}
	\label{prop:split:lim}
	Let $z^{\ast} \in \Omega$ and $ (z_{k})_{k \geq 0} $, $ (w_{k})_{k \geq 0} $, $ (\zeta_{k})_{k \geq 0} $ be the sequences generated by Algorithm~\ref{algo:split} and let $ (v_{k})_{k \geq 0} $ be the sequence defined by~\eqref{eq:vk}. Then the following statements are true:
	\begin{enumerate}[(i)]
		\item
		\label{split:lim:i}
		the following hold:
		\begin{subequations}
			\label{split:lim:sum}
			\begin{align}
			\sum_{k \geq 1} \left\langle z_{k} - z^{\ast}, F (z_{k}) + \zeta_{k} \right\rangle & < + \infty, \label{split:lim:vi} \\
			\sum_{k \geq 1} k^{2} \left\lVert v_{k+1} - v_{k} \right\rVert ^{2} & < + \infty, \label{split:lim:dV} \\
			\sum_{k \geq 1} k \left\lVert z_{k+1} - z_{k} \right\rVert ^{2} & < + \infty, \label{split:lim:dz} \\
			\sum_{k \geq 1} k \left\lVert F(w_{k}) + \zeta_{k+1} \right\rVert ^{2} & < + \infty ; \label{split:lim:V}
			\end{align}
		\end{subequations}

		\item
		\label{split:lim:ii}
		the sequence $\left(z_{k} \right) _{k \geq 0}$ is bounded and the following hold as $ k \to +\infty $:
		\begin{gather}
			\left\lVert z_{k} - z_{k-1} \right\rVert = \mathcal{O} \left( \dfrac{1}{k} \right),
			\quad
			\left\lVert \zeta_{k} +  F \left( w_{k-1} \right) \right\rVert = \mathcal{O} \left( \dfrac{1}{k} \right),
			\quad
			\left\lVert \zeta_{k} + F \left( z_{k} \right) \right\rVert = \mathcal{O} \left( \dfrac{1}{k} \right),
			\\
			\left\langle z_{k} - z^{\ast}, \zeta_{k} + F \left( z_{k} \right) \right\rangle = \mathcal{O} \left( \dfrac{1}{k} \right),
			\quad
			\left\langle z_{k} - z^{\ast}, F \left( z_{k} \right) \right\rangle = \mathcal{O} \left( \dfrac{1}{k} \right);
			\quad
		\end{gather}

		\item
		\label{split:lim:iii}
		there exist $0 \leq \underline{\lambda} \left( \alpha \right) < \overline{\lambda} \left( \alpha \right) \leq \nicefrac{(3 \alpha - 2)}{4}$ such that for every $\underline{\lambda} \left( \alpha \right) < \lambda < \overline{\lambda} \left( \alpha \right)$ the sequences $\left( \cE_{\lambda,k} \right)_{k \geq 1}$ and $\left( \cG_{\lambda,k} \right) _{k \geq 2}$ converge.
	\end{enumerate}
\end{proposition}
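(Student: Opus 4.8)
The plan is to cast the one-step estimate of Lemma~\ref{lem:reg} into the quasi-Fej\'er framework of Lemma~\ref{lem:quasi-Fej}, applied to $a_{k} := \cG_{\lambda,k}$, so that all three conclusions fall out almost simultaneously. I fix $\alpha > 2$ and pick $\lambda$ in the nonempty interval $\left( \underline{\lambda}(\alpha), \overline{\lambda}(\alpha) \right)$ furnished by Lemma~\ref{lem:trunc}, so that $0 < \lambda < \overline{\lambda}(\alpha) \le \nicefrac{(3\alpha-2)}{4}$; the strict inequality on the right makes the coefficient $2(\alpha-1)\lambda\left( 1 - \nicefrac{4\lambda}{(3\alpha-2)} \right)$ in the lower bound \eqref{ene:low} strictly positive, a fact I use repeatedly below.

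The first step is to rearrange the right-hand side of Lemma~\ref{lem:reg}(i) so as to expose the quadratic form $R_{k}$ of \eqref{Rk}. The three terms carrying $\sprod{z_{k+1}-z_{k}}{v_{k+1}}$, $\norm{z_{k+1}-z_{k}}^{2}$ and $\norm{v_{k+1}}^{2}$ split as $R_{k}$ plus a remainder: since $R_{k}$ retains the full cross term but only the fraction $\theta := \sqrt{\nicefrac{(5\alpha-2)}{(2(3\alpha-2))}} < 1$ (note $\theta < 1$ exactly because $\alpha > 2$) of the two squared terms, the leftover equals $(1-\theta)(\eta_{2} k + \kappa_{0}\sqrt{k})\norm{z_{k+1}-z_{k}}^{2} + 4(1-\theta)\gamma^{2}(\eta_{3} k + \kappa_{1}\sqrt{k})\norm{v_{k+1}}^{2}$. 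Because $\eta_{2} \le 0$ and $\eta_{3} < 0$, for $k$ large this remainder is bounded above by $-c_{1} k \norm{z_{k+1}-z_{k}}^{2} - c_{2} k \norm{v_{k+1}}^{2}$ with $c_{1}, c_{2} > 0$. Together with the monotonicity term $-4(\alpha-2)\lambda\gamma\sprod{z_{k+1}-z^{\ast}}{\zeta_{k+1}+F(z_{k+1})} \le 0$, which is nonpositive since $F + N_{C}$ is monotone with $0 \in F(z^{\ast})+N_{C}(z^{\ast})$ and $\lambda \ge 0$, and with the dissipative term $-\tfrac{\alpha-2}{\alpha-1}\mu_{k}\gamma^{2}\norm{v_{k+1}-v_{k}}^{2} \le -c_{3} k^{2} \norm{v_{k+1}-v_{k}}^{2}$ obtained from \eqref{muk}, these assemble into a single nonnegative quantity $b_{k} \ge 0$. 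Discarding $R_{k} \le 0$ by Lemma~\ref{lem:trunc}(i), the estimate reads $\cG_{\lambda,k+1} - \cG_{\lambda,k} \le \tfrac{(\alpha-1)(\alpha-2)\lambda^{2}}{\varepsilon(k+1)^{2}}\norm{z_{k+1}-z^{\ast}}^{2} - b_{k}$ for all $k$ large.

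The second step, which I expect to be the main obstacle, is to absorb the one remaining $\mathcal{O}(\nicefrac{1}{k^{2}})$ term; unlike the others it is not sign-definite and it couples to the future iterate. I bound $\norm{z_{k+1}-z^{\ast}}^{2}$ above by a constant multiple of $\cG_{\lambda,k+1}$ through \eqref{ene:low} (here the positivity of the coefficient secured above is essential), obtaining $\cG_{\lambda,k+1} - \cG_{\lambda,k} \le e_{k}\cG_{\lambda,k+1} - b_{k}$ with $e_{k} = \mathcal{O}(\nicefrac{1}{k^{2}})$ summable. For $k$ large $e_{k} < 1$, so dividing by $1 - e_{k}$ and using $b_{k} \ge 0$ with $\nicefrac{1}{(1-e_{k})} \ge 1$ gives $\cG_{\lambda,k+1} \le (1 + d_{k})\cG_{\lambda,k} - b_{k}$ with $d_{k} := \nicefrac{e_{k}}{(1-e_{k})}$ summable. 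This is precisely the hypothesis of Lemma~\ref{lem:quasi-Fej}, whose lower-boundedness requirement on $\cG_{\lambda,k}$ is again supplied by \eqref{ene:low}. Recognising the exact combination $R_{k}$ with its factor $\theta$, and converting the wrong-signed $\mathcal{O}(\nicefrac{1}{k^{2}})$ term into a \emph{multiplicative} $(1+d_{k})$ perturbation rather than an additive one, is the point that replaces the usual monotone-energy argument.

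Finally, Lemma~\ref{lem:quasi-Fej} yields convergence of $(\cG_{\lambda,k})_{k}$ and summability $\sum_{k} b_{k} < +\infty$; reading off the pieces of $b_{k}$ gives the four statements of (i), where $v_{k+1} = F(w_{k})+\zeta_{k+1}$ from \eqref{eq:vk} identifies \eqref{split:lim:V} and the reindexing $k+1 \mapsto k$ yields \eqref{split:lim:vi}. For (ii), boundedness of $\cG_{\lambda,k}$ and \eqref{ene:low} force $\norm{z_{k}-z^{\ast}} = \mathcal{O}(1)$, $k\norm{z_{k}-z_{k-1}} = \mathcal{O}(1)$ and $k\norm{v_{k}} = \mathcal{O}(1)$; the remaining rates follow by combining these with \eqref{split:Lip} to pass between $\zeta_{k}+F(w_{k-1}) = v_{k}$ and $\zeta_{k}+F(z_{k})$, and with the inequalities $\sprod{z_{k}-z^{\ast}}{\zeta_{k}} \ge 0$ and $\sprod{z_{k}-z^{\ast}}{F(z^{\ast})} \ge 0$ to sandwich the two inner products. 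For the convergence of $(\cE_{\lambda,k})_{k}$ in (iii), I subtract the two correction terms in \eqref{split:defi:F}; each is controlled by $k^{2}\norm{v_{k}-v_{k-1}}^{2}$ and $k\norm{z_{k}-z_{k-1}}\norm{v_{k}-v_{k-1}}$, both of which tend to $0$ since $\sum_{k} k^{2}\norm{v_{k+1}-v_{k}}^{2} < +\infty$ forces $k\norm{v_{k}-v_{k-1}} \to 0$, whence $\cE_{\lambda,k} = \cG_{\lambda,k} + o(1)$ converges.
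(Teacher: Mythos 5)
Your proposal is correct and follows essentially the same route as the paper's proof: extract the quadratic form $R_{k}$ from the estimate of Lemma~\ref{lem:reg}(i) and discard it via Lemma~\ref{lem:trunc}, absorb the remaining $\mathcal{O}(\nicefrac{1}{k^{2}})\normsq{z_{k+1}-z^{\ast}}$ term multiplicatively through the lower bound \eqref{ene:low} (the paper's constant $C_{0}$ plays the role of your $e_{k}$), apply Lemma~\ref{lem:quasi-Fej}, and then read off (i)--(iii) exactly as you describe, including the sandwich $0 \leq \sprod{z_{k}-z^{\ast}}{F(z_{k})} \leq \sprod{z_{k}-z^{\ast}}{\zeta_{k}+F(z_{k})}$ and the vanishing of the two correction terms that turn $\cG_{\lambda,k}$ into $\cE_{\lambda,k}$. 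The only cosmetic differences are that the paper bounds the leftover $\kappa_{0}\sqrt{k}$, $\kappa_{1}\sqrt{k}$ contributions by sacrificing half of the negative linear terms rather than writing generic constants $c_{1}, c_{2}$, and your first correction term should read $\left(k\norm{z_{k}-z_{k-1}}\right)\left(k\norm{v_{k}-v_{k-1}}\right)$; neither affects the argument.
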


\begin{proof}
	According to Lemma~\ref{lem:trunc} there exist $\underline{\lambda} \left( \alpha \right) < \overline{\lambda} \left( \alpha \right)$ such that \eqref{trunc:fea} holds. We choose $\underline{\lambda} \left( \alpha \right) < \lambda < \overline{\lambda} \left( \alpha \right)$ and get, according to the same result, an integer $k_{\lambda} \geq 1$ such that for every $k \geq k_{\lambda}$ the inequality \eqref{Rk} holds. In addition, according to Lemma~\ref{lem:trunc}(ii), we get a positive integer $k_{\varepsilon}$ such that \eqref{muk} holds for every $k \geq k_{\varepsilon}$.

	This means that for every $k \geq k_{1} := \max \left\lbrace k_{0}, k_{\lambda}, k_{\varepsilon} \right\rbrace$, where $k_{0}$ is the positive integer provided by Lemma \ref{lem:reg}(i), we have
	\begin{equation}
	\begin{split}
		\MoveEqLeft \cG_{\lambda,k+1} - \cG_{\lambda,k}\\
			&\leq \dfrac{\left( \alpha - 1 \right) \left( \alpha - 2 \right) \lambda ^{2}}{\varepsilon \left( k+1 \right) ^{2}} \left\lVert z_{k+1} - z^{\ast} \right\rVert ^{2}
				- 4 \left( \alpha - 2 \right) \lambda \gamma \left\langle z_{k+1} - z^{\ast}, \zeta_{k+1} + F \left( z_{k+1} \right) \right\rangle \\
				&- \dfrac{\alpha - 2}{2 \left( \alpha - 1 \right)} \varepsilon \gamma^{2} \left( k+1 \right)^{2} \left\lVert v_{k+1} - v_{k} \right\rVert ^{2}
				+ \left[ \left( 1 - \sqrt{\dfrac{5 \alpha - 2}{2 \left( 3 \alpha - 2 \right)}} \right) \eta_{2} k + \kappa_{0} \sqrt{k} \right]  \left\lVert z_{k+1} - z_{k} \right\rVert ^{2} \\
				& + \left[\left( 1 - \sqrt{\dfrac{5 \alpha - 2}{2 \left( 3 \alpha - 2 \right)}} \right) \eta_{3} k + \kappa_{1} \sqrt{k} \right] 4 \gamma^{2}  \left\lVert v_{k+1} \right\rVert ^{2} .
	\end{split}
	\end{equation}
	Since $\eta_{2}, \eta_{3} < 0$ and $\kappa_{0}, \kappa_{1} \geq 0$, we can find some $ k_{2} \geq k_{1} $ large enough such that for every $ k \geq k_{2} $ we get
	\begin{equation}\label{ineqFk}
	\begin{split}
		\MoveEqLeft \cG_{\lambda,k+1}
			\leq \cG_{\lambda,k} + \dfrac{\left( \alpha - 1 \right) \left( \alpha - 2 \right) \lambda ^{2}}{\varepsilon \left( k+1 \right) ^{2}} \left\lVert z_{k+1} - z^{\ast} \right\rVert ^{2}
				- 4 \left( \alpha - 2 \right) \lambda \gamma \left\langle z_{k+1} - z^{\ast}, \zeta_{k+1} + F \left( z_{k+1} \right) \right\rangle \\
				&- \dfrac{\alpha - 2}{2 \left( \alpha - 1 \right)} \varepsilon \gamma^{2} \left( k+1 \right)^{2} \left\lVert v_{k+1} - v_{k} \right\rVert ^{2}
				+ \frac{1}{2} \left( 1 - \sqrt{\dfrac{5 \alpha - 2}{2 \left( 3 \alpha - 2 \right)}} \right) \eta_{2} k \left\lVert z_{k+1} - z_{k} \right\rVert ^{2}\\
				&+ \left( 1 - \sqrt{\dfrac{5 \alpha - 2}{2 \left( 3 \alpha - 2 \right)}} \right) 2 \gamma^{2} \eta_{3} k \left\lVert v_{k+1} \right\rVert ^{2} .
	\end{split}
	\end{equation}
	In view of \eqref{ene:low}, we get that $\cG_{\lambda,k} \geq 0$ for every $k \geq 1$ thus the sequence $\left( \cG_{\lambda,k}  \right) _{k \geq 2}$ is bounded from below.
	Moreover, by setting
	\begin{equation*}
	C_{0} := \dfrac{1}{2 \varepsilon} \left( \alpha - 2 \right) \lambda \left( 1 - \dfrac{4 \lambda}{3 \alpha - 2} \right) ^{-1} > 0,
	\end{equation*}
	we assert that
	\begin{align*}
	\MoveEqLeft \dfrac{\left( \alpha - 1 \right) \left( \alpha - 2 \right) \lambda ^{2}}{\varepsilon \left( k+1 \right) ^{2}} \left\lVert z_{k+1} - z^{\ast} \right\rVert ^{2}
	= \dfrac{C_{0}}{\left( k+1 \right) ^{2}} \cdot 2 \left( \alpha - 1 \right) \lambda \left( 1 - \dfrac{4 \lambda}{3 \alpha - 2} \right) \left\lVert z_{k} - z^{\ast} \right\rVert ^{2} \nonumber \\
	& \leq \dfrac{C_{0}}{\left( k+1 \right) ^{2}} \cG_{\lambda,k+1},
	\end{align*}
	Under these premises, we deduce from \eqref{ineqFk} that for every $k \geq k_{2} $
	\begin{equation}\label{inq:Fk-re}
	\begin{split}
		\MoveEqLeft \left( 1 - \dfrac{C_{0}}{\left( k+1 \right) ^{2}} \right) \cG_{\lambda,k+1}
		\leq \cG_{\lambda,k} - 4 \left( \alpha - 2 \right) \lambda \gamma \left\langle z_{k+1} - z^{\ast}, \zeta_{k+1} + F \left( z_{k+1} \right) \right\rangle \\
		&- \dfrac{\alpha - 2}{2 \left( \alpha - 1 \right)} \varepsilon \gamma^{2} \left( k+1 \right)^{2} \left\lVert v_{k+1} - v_{k} \right\rVert ^{2} \\
		&+ \frac{1}{2} \left( 1 - \sqrt{\dfrac{5 \alpha - 2}{2 \left( 3 \alpha - 2 \right)}} \right) \eta_{2} k \left\lVert z_{k+1} - z_{k} \right\rVert ^{2}
		+ \left( 1 - \sqrt{\dfrac{5 \alpha - 2}{2 \left( 3 \alpha - 2 \right)}} \right) 2 \gamma^{2} \eta_{3} k \left\lVert v_{k+1} \right\rVert ^{2} .
	\end{split}
	\end{equation}
	Taking $k_{3} := \max \left\lbrace k_{2}, \left\lceil \sqrt{C_{0}} - 1 \right\rceil \right\rbrace$ we conclude for every $ k \geq k_{3} $ that
	\begin{equation*}
	\left( 1 - \dfrac{C_{0}}{\left( k+1 \right) ^{2}} \right) ^{-1} = \dfrac{\left( k+1 \right) ^{2}}{\left( k+1 \right) ^{2} - C_{0}} = 1 + \dfrac{C_{0}}{\left( k+1 \right) ^{2} - C_{0}} > 1 .
	\end{equation*}
	Hence, for every $k \geq k_{3}$, the inequality \eqref{inq:Fk-re} leads to
	\begin{align*}
	\MoveEqLeft \cG_{\lambda,k+1}
	\leq \left( 1 + \dfrac{C_{0}}{\left( k+1 \right) ^{2} - C_{0}} \right) \cG_{\lambda,k} - 4 \left( \alpha - 2 \right) \lambda \gamma \left\langle z_{k+1} - z^{\ast}, \zeta_{k+1} + F \left( z_{k+1} \right) \right\rangle \\
	&- \dfrac{\alpha - 2}{2 \left( \alpha - 1 \right)} \varepsilon \gamma^{2} \left( k+1 \right)^{2} \left\lVert v_{k+1} - v_{k} \right\rVert ^{2} \\
	&+ \frac{1}{2} \left( 1 - \sqrt{\dfrac{5 \alpha - 2}{2 \left( 3 \alpha - 2 \right)}} \right) \eta_{2} k \left\lVert z_{k+1} - z_{k} \right\rVert ^{2}
	+ \left( 1 - \sqrt{\dfrac{5 \alpha - 2}{2 \left( 3 \alpha - 2 \right)}} \right)  2 \gamma^{2} \eta_{3} k \left\lVert v_{k+1} \right\rVert ^{2},
	\end{align*}
	which is nothing else than the inequality~\eqref{inq:G} with
	\begin{align*}
	b_{\lambda, k} 	& := 4 \left( \alpha - 2 \right) \lambda \gamma \left\langle z_{k+1} - z^{\ast}, \zeta_{k+1} + F \left( z_{k+1} \right) \right\rangle + \dfrac{\alpha - 2}{2 \left( \alpha - 1 \right)} \varepsilon \gamma^{2} \left( k+1 \right)^{2} \left\lVert v_{k+1} - v_{k} \right\rVert ^{2}
	\nonumber \\
	& \quad - \frac{1}{2} \left( 1 - \sqrt{\dfrac{5 \alpha - 2}{2 \left( 3 \alpha - 2 \right)}} \right) \eta_{2} k \left\lVert z_{k+1} - z_{k} \right\rVert ^{2}
	- \left( 1 - \sqrt{\dfrac{5 \alpha - 2}{2 \left( 3 \alpha - 2 \right)}} \right) 2 \gamma^{2} \eta_{3} k \left\lVert v_{k+1} \right\rVert ^{2}\\
	&\geq 0, \\
	d_{\lambda, k} 	& := \dfrac{C_{0}}{\left( k+1 \right) ^{2} - C_{0}} > 0 .
	\end{align*}
	Using Lemma~\ref{lem:quasi-Fej} we obtain \eqref{split:lim:sum} as well as convergence of the sequence $\left( \cG_{\lambda,k} \right)_{k \geq 1}$.

	Since $\left( \cG_{\lambda,k}  \right) _{k \geq 1}$ converges, it is also bounded from above, which, according to \eqref{ene:low}, implies that the following estimate holds for every $k \geq k_{3}$
	\begin{align*}
	\MoveEqLeft \dfrac{\alpha - 2}{3 \alpha - 2} \left\lVert 4 \lambda \left( z_{k} - z^{\ast} \right) + 2k \left( z_{k} - z_{k-1} \right) + \dfrac{2 \left( 3 \alpha - 2 \right)}{\alpha - 1} \gamma k v_{k} \right\rVert ^{2} \\
		&\quad + \dfrac{\left( \alpha - 2 \right) ^{2}}{4 \left( 3 \alpha - 2 \right) \left( \alpha - 1 \right)} k^{2} \left\lVert z_{k} - z_{k-1} \right\rVert ^{2}
		+ 2 \left( \alpha - 1 \right) \lambda \left( 1 - \dfrac{4 \lambda}{3 \alpha - 2} \right) \left\lVert z_{k} - z^{\ast} \right\rVert ^{2}\\
		&\leq \cG_{\lambda,k}
		\leq \sup_{k \geq 1} \cG_{\lambda,k}
		< +\infty .
	\end{align*}
	From here	we obtain the boundedness of the sequences
	\begin{equation}
	\begin{gathered}
	\left( 4 \lambda \left( z_{k} - z^{\ast} \right) + 2k \left( z_{k} - z_{k-1} \right) + \dfrac{2 \left( 3 \alpha - 2 \right)}{\alpha - 1} \gamma k v_{k} \right) _{k \geq 1},\\
	\left( k \left( z_{k} - z_{k-1} \right) \right) _{k \geq 1}
	\quad \textrm{and} \quad
	\left(z_{k} \right) _{k \geq 0} .
	\end{gathered}
	\end{equation}
	In particular, for every $k \geq k_{3}$ we have
	\begin{align}
		\left\lVert 4 \lambda \left( z_{k} - z^{\ast} \right) + 2k \left( z_{k} - z_{k-1} \right) + \dfrac{2 (3 \alpha - 2)}{\alpha - 1} \gamma k v_{k} \right\rVert
		\leq C_{1} := \sqrt{\dfrac{3 \alpha - 2}{\alpha - 2} \sup_{k \geq 1} \cG_{\lambda,k}} < + \infty, \\
		k \left\lVert z_{k} - z_{k-1} \right\rVert
		\leq C_{2} := \dfrac{2}{\alpha - 2} \sqrt{\left( 3 \alpha - 2 \right) \left( \alpha - 1 \right) \sup_{k \geq 1} \cG_{\lambda,k}} < + \infty, \\
		\left\lVert z_{k} - z^{\ast} \right\rVert
		\leq C_{3} := \sqrt{\dfrac{1}{2 \left( \alpha - 1 \right) \lambda} \left( 1 - \dfrac{4 \lambda}{3 \alpha - 2} \right) ^{-1} \sup_{k \geq 1} \cG_{\lambda,k}} < + \infty .\label{ineq-C3}
	\end{align}
	Using the triangle inequality, we deduce from here that  for every $k \geq k_{3}$
	\begin{align}
	\left\lVert v_{k} \right\rVert
	& \leq \dfrac{\alpha - 1}{2 \left( 3 \alpha - 2 \right) \gamma k} \left\lVert 4 \lambda \left( z_{k} - z^{\ast} \right) + 2k \left( z_{k} - z_{k-1} \right) + \dfrac{2 \left( 3 \alpha - 2 \right)}{\alpha - 1} \gamma k v_{k} \right\rVert \nonumber \\
	& \quad + \dfrac{\alpha - 1}{\left( 3 \alpha - 2 \right) \gamma} \left\lVert z_{k} - z_{k-1} \right\rVert + \dfrac{2 \left( \alpha - 1 \right) \lambda}{\left( 3 \alpha - 2 \right) \gamma k} \left\lVert z_{k} - z^{\ast} \right\rVert \leq \dfrac{C_{4}}{k},
	\end{align}
	where
	\begin{equation*}
	C_{4} := \dfrac{\alpha - 1}{2 \left( 3 \alpha - 2 \right) \gamma} \left( C_{1} + 2C_{2} + 4 \overline{\lambda} \left( \alpha \right) C_{3} \right) > 0 .
	\end{equation*}
	The statement \eqref{split:lim:dV} yields
	\begin{equation}
	\lim\limits_{k \to + \infty} k \left\lVert v_{k+1} - v_{k} \right\rVert = 0
	\quad \Rightarrow \quad
	C_{5} := \sup_{k \geq 1} \left\lbrace k \left\lVert v_{k+1} - v_{k} \right\rVert \right\rbrace < + \infty, \label{split:lim:dV-0}
	\end{equation}
	which, together with \eqref{split:Lip} implies that  for every $k \geq k_{3}$
	\begin{equation}
	\begin{split}
	\left\lVert \zeta_{k+1} + F \left( z_{k+1} \right) \right\rVert
	&\leq \left\lVert \zeta_{k+1} + F \left( z_{k+1} \right) - v_{k+1} \right\rVert + \left\lVert v_{k+1} \right\rVert\\
	&\leq \left\lVert v_{k+1} - v_{k} \right\rVert + \left\lVert v_{k+1} \right\rVert \label{split:lim:V-dV}
	\leq \dfrac{C_{6}}{k},
	\end{split}
	\end{equation}
	where
	\begin{equation*}
	C_{6} := C_{4} + C_{5} > 0 .
	\end{equation*}
	The remaining assertion follows from the fact that $\zeta_{k} \in N_{C} \left( z_{k} \right)$ where $z_{k} \in C$ by definition, the Cauchy-Schwarz inequality and the boundedness of $\left( z_{k} \right) _{k \geq 0}$, namely, for every $k \geq k_{3}$ we deduce
	\begin{equation*}
	0 \leq \left\langle z_{k} - z^{\ast}, F \left( z_{k} \right) \right\rangle \leq \left\langle z_{k} - z^{\ast}, \zeta_{k} + F \left( z_{k} \right) \right\rangle \leq \left\lVert z_{k} - z^{\ast} \right\rVert \left\lVert \zeta_{k} + F \left( z_{k} \right) \right\rVert \leq \dfrac{C_{3} C_{6}}{k-1} .
	\end{equation*}
	To complete the proof, we are going to show that in fact
	\begin{equation*}
	\lim\limits_{k \to + \infty} \cE_{\lambda,k} = \lim\limits_{k \to + \infty} \cG_{\lambda,k}  \in \R .
	\end{equation*}
	Indeed, we already have seen that
	\begin{equation*}
	\lim\limits_{k \to + \infty} \left( k + 1 \right) \left\lVert v_{k+1} - v_{k} \right\rVert = \lim\limits_{k \to + \infty} \left\lVert v_{k+1} \right\rVert = 0,
	\end{equation*}
	which, by the Cauchy-Schwarz inequality and \eqref{split:Lip} yields
	\begin{align*}
	0 \leq \lim\limits_{k \to + \infty} k^{2} \left\lvert \left\langle z_{k} - z_{k-1}, F \left( z_{k} \right) - F \left( w_{k-1} \right) \right\rangle \right\rvert
	& \leq C_{2} \lim\limits_{k \to + \infty} k \left\lVert F \left( z_{k} \right) - F \left( w_{k-1} \right) \right\rVert \nonumber \\
	& \leq C_{2} \lim\limits_{k \to + \infty} k \left\lVert v_{k} - v_{k-1} \right\rVert = 0 .
	\end{align*}
	From here we obtain the desired statement.
\end{proof}

\subsection{Proofs of the Main Results}

\begin{proof}[Proof of Theorem~\ref{thm:conv}]
	Let $\underline{\lambda} \left( \alpha \right) < \overline{\lambda} \left( \alpha \right)$ be the parameters provided by Lemma~\ref{lem:trunc} such that \eqref{trunc:fea} holds and with the property that for every $\underline{\lambda} \left( \alpha \right) < \lambda < \overline{\lambda} \left( \alpha \right)$ there exists an integer $k_{\lambda} \geq 1$ such that for every $k \geq k_{\lambda}$ the inequality \eqref{Rk} holds.

	For every $k \geq 1$ we set
	\begin{align}
	p_{k} &:= \dfrac{1}{2} \left( \alpha - 1 \right) \left\lVert z_{k} - z^{\ast} \right\rVert ^{2} + k \left\langle z_{k} - z^{\ast}, z_{k} - z_{k-1} + 2 \gamma v_{k} \right\rangle, \label{split:defi:p-k} \\
	q_{k}    & := \dfrac{1}{2} \left\lVert z_{k} - z^{\ast} \right\rVert ^{2} + 2 \gamma \sum_{i = 1}^{k} \left\langle z_{i} - z^{\ast}, v_{i} \right\rangle . \label{split:defi:q-k}
	\end{align}
	Then one can see that for every $k \geq 2$ we have
	\begin{align*}
	q_{k} - q_{k-1}
	& = \left\langle z_{k} - z^{\ast}, z_{k} - z_{k-1} \right\rangle - \dfrac{1}{2} \left\lVert z_{k} - z_{k-1} \right\rVert ^{2} + 2 \gamma \left\langle z_{k} - z^{\ast}, v_{k} \right\rangle,
	\end{align*}
	and thus
	\begin{equation*}
	\left( \alpha - 1 \right) q_{k} + k \left( q_{k} - q_{k-1} \right) = p_{k} + 2 \left( \alpha - 1 \right) \gamma \sum_{i = 1}^{k} \left\langle z_{i} - z^{\ast}, v_{i} \right\rangle - \dfrac{k}{2} \left\lVert z_{k} - z_{k-1} \right\rVert ^{2} .
	\end{equation*}

	From \eqref{split:defi:E-k} and \eqref{split:defi:u-k-1-lambda}, direct computation shows that for every $k \geq 1$
	\begin{equation}\label{defi:E-lambda}
	\begin{split}
		\cE_{\lambda,k}
		&= \dfrac{1}{2} \left\lVert 2 \lambda \left( z_{k} - z^{\ast} \right) + 2k \left( z_{k} - z_{k-1} \right) + \dfrac{3 \alpha - 2}{\alpha - 1} \gamma k v_{k} \right\rVert ^{2}
		+ 2 \lambda \left( \alpha - 1 - \lambda \right) \left\lVert z_{k} - z^{\ast} \right\rVert ^{2} \\
			&\quad + \dfrac{2 \left( \alpha - 2 \right)}{\alpha - 1} \lambda \gamma k \left\langle z_{k} - z^{\ast}, v_{k} \right\rangle + \dfrac{\alpha - 2}{\alpha - 1} \gamma^{2} k \left( \dfrac{1}{2 \left( \alpha - 1 \right)} \left( 3 \alpha - 2 \right) k + \alpha \right) \left\lVert v_{k} \right\rVert ^{2} \\
		&= 2 \lambda \left( \alpha - 1 \right) \left\lVert z_{k} - z^{\ast} \right\rVert ^{2} + 4 \lambda k \left\langle z_{k} - z^{\ast}, z_{k} - z_{k-1} + 2 \gamma v_{k} \right\rangle + \dfrac{\alpha - 2}{\alpha - 1} \alpha \gamma^{2} k \left\lVert v_{k} \right\rVert ^{2} \\
			&\quad + \dfrac{k^{2}}{2} \left( \left\lVert 2 \left( z_{k} - z_{k-1} \right) + \dfrac{3 \alpha - 2}{\alpha - 1} \gamma v_{k} \right\rVert ^{2} + \dfrac{\left( \alpha - 2 \right) \left( 3 \alpha - 2 \right)}{\left( \alpha - 1 \right)^{2}} \gamma^{2} \left\lVert v_{k} \right\rVert ^{2} \right) .
	\end{split}
	\end{equation}
	Therefore, for every $\underline{\lambda} \left( \alpha \right) < \lambda_{1} < \lambda_{2} < \overline{\lambda} \left( \alpha \right)$ we can conclude
	\begin{align*}
	\cE_{\lambda_{2},k} - \cE_{\lambda_{1},k}
	= \ & 4 \left( \lambda_{2} - \lambda_{1} \right) \left( \dfrac{1}{2} \left( \alpha - 1 \right) \left\lVert z_{k} - z^{\ast} \right\rVert ^{2} + 2  k \left\langle z_{k} - z^{\ast}, \left( z_{k} - z_{k-1} \right) +
	s v_{k} \right\rangle \right) \nonumber \\
	= \ & 4 \left( \lambda_{2} - \lambda_{1} \right) p_{k} .
	\end{align*}
	Hence, according to the previous theorem, the limit $\lim_{k \to + \infty} \left(\cE_{\lambda_{2},k} - \cE_{\lambda_{1},k} \right)\in \R$ exists, which implies further that the limit
	\begin{equation}
	\label{split:lim-p-k}
	\lim\limits_{k \to + \infty} p_{k} \in \R \textrm{ exists}.
	\end{equation}
	Further, we observe that for every $k \geq 2$
	\begin{subequations}
		\begin{align}
		\sum_{i = 2}^{k} \left\lvert \left\langle z_{i} - z^{\ast}, F \left( w_{i-1} \right) - F \left( z_{i} \right) \right\rangle \right\rvert
		& \leq \sum_{i = 2}^{k} \left\lVert z_{i} - z^{\ast} \right\rVert \left\lVert F \left( w_{i-1} \right) - F \left( z_{i} \right) \right\rVert \label{conv:inn:C-S} \\
		& \leq \frac{1}{2} \sum_{i=2}^{k} \frac{1}{i^{2}} \normsq{z_{i} - z^{\ast}} + \frac{1}{2} \sum_{i=2}^{k} i^{2} \normsq{F(w_{i-1}) - F(z_{i})} \\
		& \leq \frac{1}{2} \sum_{i=2}^{+ \infty} \frac{1}{i^{2}} \normsq{z_{i} - z^{\ast}} + \frac{1}{2} \sum_{i=2}^{+ \infty} i^{2} \normsq{F(w_{i-1}) - F(z_{i})} < + \infty, \label{conv:inn:sup}
		\end{align}
	\end{subequations}
	where \eqref{conv:inn:C-S} comes from the Cauchy-Schwarz inequality, the first sum in~\eqref{conv:inn:sup} is finite due to~\eqref{ineq-C3}, while the second series is convergent because of~\eqref{split:Lip} and~\eqref{split:lim:dV}. This means the series $\sum_{k \geq 2} \left \langle z_{k} - z^{\ast}, F \left( w_{k-1} \right) - F \left( z_{k} \right) \right\rangle$ is absolutely convergent, thus convergent.

	By taking into consideration \eqref{split:lim:vi}, it follows from here that the limit
	\begin{equation*}
	\begin{split}
		\MoveEqLeft \lim_{k \to + \infty} \sum_{i = 1}^{k} \left\langle z_{i} - z^{\ast}, v_{i} \right\rangle\\
		&= \lim_{k \to + \infty} \sum_{i = 1}^{k} \left\langle z_{i} - z^{\ast}, \zeta_{i} + F \left( z_{i} \right) \right\rangle + \lim_{k \to + \infty} \sum_{i = 1}^{k} \left\langle z_{i} - z^{\ast}, F \left( w_{i-1} \right) - F \left( z_{i} \right) \right\rangle \in \R
	\end{split}
	\end{equation*}
	exists. In addition, thanks to \eqref{split:lim:dz}, we have $\lim_{k \to + \infty} k \left\lVert z_{k+1} - z_{k} \right\rVert ^{2} = 0$, consequently,
	\begin{equation*}
	\lim\limits_{k \to + \infty} \left( \alpha - 1 \right) q_{k} + k \left( q_{k} - q_{k-1} \right) \in \R \textrm{ exists} .
	\end{equation*}
	According to Proposition~\ref{prop:split:lim}, we have that $\left(q_{k} \right)_{k \geq 1}$ is bounded due to the boundedness of $\left(z_{k} \right)_{k \geq 0}$ and the fact that $\lim_{k \to + \infty} \sum_{i = 1}^{k} \left\langle z_{i} - z^{\ast}, v_{i} \right\rangle \in \R$ exists.
	Therefore, we can apply Lemma~\ref{lem:lim-u-k} to guarantee the existence of the limit $\lim_{k \to + \infty} q_{k} \in \R$.
	By the definition of $q_{k}$ in \eqref{split:defi:q-k} and the fact that the sequence $\left( \sum_{i = 1}^{k-1} \left\langle z_{i} - z^{\ast}, v_{i} \right\rangle \right)_{k \geq 1}$ converges, we conclude that $\lim_{k \to + \infty} \left\lVert z_{k} - z^{\ast} \right\rVert \in \R$ exists. The hypothesis (i) in the Opial Lemma (see Lemma~\ref{lem:opial}) is fulfilled.

	Let $w$ be a cluster point of $\left(z_{k} \right)_{k \geq 0}$, which means that there exists a subsequence $\left\lbrace z_{k_{n}} \right\rbrace _{n \geq 0}$ such that
	\begin{equation*}
	z_{k_{n}} \to w \textrm{ as } n \to + \infty .
	\end{equation*}
	It follows from Proposition~\ref{prop:split:lim} that
	\begin{equation*}
	 F \left( z_{k_{n}} \right) + \zeta_{k_{n}} \to 0
	 \quad \textrm{as } n \to + \infty .
	\end{equation*}
	The maximal monotonicity of $F + N_{C}$ implies that $0 \in \left( N_{C} + F \right) \left( w \right)$, meaning that hypothesis (ii) of Lemma~\ref{lem:opial} is also verified. The proof of the convergence of the iterates is therefore completed.
\end{proof}

Before finally proving Theorem~\ref{thm:gap-res} we show convergence rates of various helpful quantities.
\begin{proposition}\label{thm:o-rates}
	Let $ z^{\ast} \in \Omega $ and $(z_{k})_{k \geq 0}$ be the sequence generated by Algorithm~\ref{algo:split}. Then, as $ k \to +\infty $, the following hold:
	\begin{equation}
	\begin{gathered}
		\norm{z_{k} - z_{k-1}} = o \left( \dfrac{1}{k} \right), \quad
		\sprod{F(z_{k})}{z_{k} - z^{\ast}} = o \left( \frac{1}{k} \right), \quad
		\sprod{\zeta_{k} + F(z_{k})}{z_{k} - z^{\ast}} = o \left( \frac{1}{k} \right)\\
		\norm{\zeta_{k} + F(z_{k})} = o \left( \frac{1}{k} \right), \quad
		\norm{\zeta_{k} +  F(w_{k-1})} = o \left( \frac{1}{k} \right).
	\end{gathered}
	\end{equation}
\end{proposition}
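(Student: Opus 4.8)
The plan is to bootstrap the $\mathcal{O}(\nicefrac{1}{k})$ estimates of Proposition~\ref{prop:split:lim}(ii) into the sharper $o(\nicefrac{1}{k})$ rates. The obstacle is that the summability relations \eqref{split:lim:sum} are by themselves too weak: from $\sum_{k\ge1}k\normsq{v_k}<+\infty$ (which holds in view of \eqref{split:lim:V} and \eqref{eq:vk}) one only extracts $\normsq{v_k}=o(\nicefrac{1}{k})$, i.e. $\norm{v_k}=o(\nicefrac{1}{\sqrt{k}})$, which falls short. The decisive extra ingredient is the convergence of the energy $(\cE_{\lambda,k})_{k\ge1}$ on a whole interval of parameters $\lambda$, as provided by Proposition~\ref{prop:split:lim}(iii). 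Throughout I would rely on the elementary principle that \emph{a nonnegative sequence $(s_k)$ which converges and satisfies $\sum_{k\ge1}\nicefrac{s_k}{k}<+\infty$ must converge to $0$}: a strictly positive limit would force $\nicefrac{s_k}{k}\gtrsim\nicefrac{1}{k}$, contradicting summability of the harmonic-type tail.

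First I would exploit the affine dependence of $\cE_{\lambda,k}$ on $\lambda$. Reading off \eqref{defi:E-lambda} and comparing with \eqref{split:defi:p-k}, one has the exact splitting $\cE_{\lambda,k}=4\lambda p_k+\Psi_k$, where $p_k$ is the quantity in \eqref{split:defi:p-k} and $\Psi_k$ collects the $\lambda$-independent terms,
\begin{equation*}
\Psi_k=\tfrac{\alpha-2}{\alpha-1}\alpha\gamma^2k\normsq{v_k}+\tfrac{k^2}{2}\Bigl(\normsq{2(z_k-z_{k-1})+\tfrac{3\alpha-2}{\alpha-1}\gamma v_k}+\tfrac{(\alpha-2)(3\alpha-2)}{(\alpha-1)^2}\gamma^2\normsq{v_k}\Bigr).
\end{equation*}
Evaluating the convergence in Proposition~\ref{prop:split:lim}(iii) at two distinct admissible values $\lambda_1\neq\lambda_2$ shows that $p_k$ converges, whence $\Psi_k$ converges as well. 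Since the first summand of $\Psi_k$ is $\mathcal{O}(\nicefrac{1}{k})\to0$ (using $\norm{v_k}=\mathcal{O}(\nicefrac{1}{k})$, so that $k\normsq{v_k}\to0$), the remaining nonnegative part
\begin{equation*}
\widetilde\Psi_k:=\tfrac{k^2}{2}\normsq{2(z_k-z_{k-1})+\tfrac{3\alpha-2}{\alpha-1}\gamma v_k}+\tfrac{(\alpha-2)(3\alpha-2)}{2(\alpha-1)^2}\gamma^2k^2\normsq{v_k}
\end{equation*}
is itself convergent. Dividing by $k$ and invoking \eqref{split:lim:dz} together with $\sum_{k\ge1}k\normsq{v_k}<+\infty$ yields $\sum_{k\ge1}\nicefrac{\widetilde\Psi_k}{k}<+\infty$, so the principle above forces $\widetilde\Psi_k\to0$. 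As $\widetilde\Psi_k$ is a sum of two nonnegative terms, each vanishes: this delivers $k\norm{v_k}\to0$ and $k\norm{2(z_k-z_{k-1})+\tfrac{3\alpha-2}{\alpha-1}\gamma v_k}\to0$, and the triangle inequality then gives $k\norm{z_k-z_{k-1}}\to0$. Thus $\norm{z_k-z_{k-1}}=o(\nicefrac{1}{k})$ and $\norm{v_k}=o(\nicefrac{1}{k})$.

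The remaining rates follow quickly. The estimate $\norm{\zeta_k+F(w_{k-1})}=\norm{v_k}=o(\nicefrac{1}{k})$ is immediate from \eqref{eq:vk}. For the natural residual I would use \eqref{split:lim:V-dV}, namely $\norm{\zeta_k+F(z_k)}\le\norm{v_k-v_{k-1}}+\norm{v_k}$: the second term is $o(\nicefrac{1}{k})$ by the previous step, while $k\norm{v_k-v_{k-1}}\to0$ is a direct consequence of $\sum_{k\ge1}k^2\normsq{v_{k+1}-v_k}<+\infty$ in \eqref{split:lim:dV} (the summand tends to $0$), so $\norm{\zeta_k+F(z_k)}=o(\nicefrac{1}{k})$. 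Finally, since $z_k\in C$ and $\zeta_k\in N_C(z_k)$, the chain $0\le\sprod{z_k-z^\ast}{F(z_k)}\le\sprod{z_k-z^\ast}{\zeta_k+F(z_k)}\le\norm{z_k-z^\ast}\,\norm{\zeta_k+F(z_k)}$ combined with the boundedness of $(z_k)_{k\ge0}$ from Proposition~\ref{prop:split:lim}(ii) turns the residual rate into $\sprod{z_k-z^\ast}{F(z_k)}=o(\nicefrac{1}{k})$ and $\sprod{z_k-z^\ast}{\zeta_k+F(z_k)}=o(\nicefrac{1}{k})$.

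The main obstacle is precisely the passage carried out in the second paragraph: none of the $o(\nicefrac{1}{k})$ rates for $z_k-z_{k-1}$ and $v_k$ can be read off the summability relations directly, and the whole argument hinges on isolating the clean $\lambda$-independent quadratic $\widetilde\Psi_k$ inside the energy and certifying that its leftover (non-squared) pieces genuinely vanish. The care lies in verifying the exact splitting $\cE_{\lambda,k}=4\lambda p_k+\Psi_k$, that the convergence of $(\cE_{\lambda,k})$ across distinct $\lambda$ transfers to both $p_k$ and $\Psi_k$, and that the discarded term $\tfrac{\alpha-2}{\alpha-1}\alpha\gamma^2k\normsq{v_k}$ tends to $0$.
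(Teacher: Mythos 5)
Your proposal is correct and follows essentially the same route as the paper: the identical decomposition $\cE_{\lambda,k}=4\lambda p_{k}+\tfrac{\alpha-2}{\alpha-1}\alpha\gamma^{2}k\normsq{v_{k}}+h_{k}$ (your $\widetilde\Psi_{k}$ is the paper's $h_{k}$), the convergence of $p_{k}$ extracted by comparing two admissible values of $\lambda$ (which the paper establishes in the proof of Theorem~\ref{thm:conv} via $\cE_{\lambda_{2},k}-\cE_{\lambda_{1},k}=4(\lambda_{2}-\lambda_{1})p_{k}$), and the principle that a convergent nonnegative sequence with summable $\nicefrac{1}{k}$-weighted tail must vanish. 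The concluding steps for the residual and inner-product rates likewise match the paper's argument verbatim.
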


\begin{proof}
	Let $\underline{\lambda} \left( \alpha \right) < \overline{\lambda} \left( \alpha \right)$ be the parameters provided by Lemma~\ref{lem:trunc} such that \eqref{trunc:fea} holds and with the property that for every $\underline{\lambda} \left( \alpha \right) < \lambda < \overline{\lambda} \left( \alpha \right)$ there exists an integer $k_{\lambda} \geq 1$ such that for every $k \geq k_{\lambda}$ the inequality \eqref{Rk} holds.
	We fix $\underline{\lambda} \left( \alpha \right) < \lambda < \overline{\lambda} \left( \alpha \right)$ and recall that according to Proposition~\ref{prop:split:lim}(iii) the sequence $(\cE_{\lambda,k} )_{k \geq 1}$ converges.

	We set for every $k \geq 1$
	\begin{equation*}
		h_{k} := \dfrac{k^{2}}{2} \left( \left\lVert 2 \left( z_{k} - z_{k-1} \right) + \dfrac{3 \alpha - 2}{\alpha - 1} \gamma v_{k} \right\rVert ^{2} + \dfrac{\left( \alpha - 2 \right) \left( 3 \alpha - 2 \right)}{\left( \alpha - 1 \right) ^{2}} \gamma^{2} \left\lVert v_{k} \right\rVert ^{2} \right),
	\end{equation*}
	and notice that, in view of \eqref{defi:E-lambda} and \eqref{split:defi:p-k}, we have
	\begin{equation*}
	\cE_{\lambda,k} = 4 \lambda p_{k} +  \dfrac{4 \left( \alpha - 2 \right)}{\alpha - 1} \alpha \gamma^{2} \gamma^{2} k \left\lVert v_{k} \right\rVert ^{2} + h_{k} .
	\end{equation*}
	Proposition~\ref{prop:split:lim} asserts that
	\begin{equation*}
	\lim\limits_{k \to \infty} k \left\lVert v_{k} \right\rVert ^{2} = 0,
	\end{equation*}
	which, together with $\lim_{k \to + \infty} \cE_{\lambda,k} \in \R$ and $\lim_{k \to + \infty} p_{k} \in \R$ (see also \eqref{split:lim-p-k}), yields the existence of
	\begin{equation*}
	\lim\limits_{k \to + \infty} h_{k} \in \R.
	\end{equation*}
	In addition, \eqref{split:lim:dz} and \eqref{split:lim:V} in Proposition~\ref{prop:split:lim} guarantee that
	\begin{align*}
	\sum_{k \geq 1} \dfrac{1}{k} h_{k} \leq 4 \sum_{k \geq 1} k \left\lVert z_{k} - z_{k-1} \right\rVert ^{2} + \dfrac{\left( 3 \alpha - 2 \right) \left( 7 \alpha - 6 \right)}{2 \left( \alpha - 1 \right) ^{2}} \gamma^{2} \sum_{k \geq 1} k \left\lVert v_{k} \right\rVert ^{2} < + \infty .
	\end{align*}
	Consequently, $\lim_{k \to + \infty} h_{k} = 0$, which yields
	\begin{equation*}
	\lim\limits_{k \to \infty} k \left\lVert 2 \left( z_{k} - z_{k-1} \right) + \dfrac{3 \alpha - 2}{\alpha - 1} \gamma v_{k} \right\rVert = \lim\limits_{k \to \infty} k \left\lVert v_{k} \right\rVert = 0 .
	\end{equation*}
	This immediately implies $\lim_{k \to + \infty} k\left\lVert z_{k} - z_{k-1} \right\rVert = 0$. The fact that
	\begin{equation}
		\lim_{k \to + \infty} k \left\lVert \zeta_{k} + F \left( z_{k} \right) \right\rVert = 0
	\end{equation}
	follows from \eqref{split:Lip}, \eqref{split:lim:dV-0} and \eqref{split:lim:V-dV}, since
	\begin{equation*}
	0 \leq \lim\limits_{k \to + \infty} k \left\lVert \zeta_{k} + F \left( z_{k} \right) \right\rVert
	\leq \lim\limits_{k \to + \infty} k \left\lVert v_{k} - v_{k-1} \right\rVert + \lim\limits_{k \to + \infty} k \left\lVert v_{k} \right\rVert = 0 .
	\end{equation*}
	Finally, using the Cauchy-Schwarz inequality and the fact that $\left(z_{k} \right) _{k \geq 0}$ is bounded, we obtain that $\lim_{k \to + \infty} k \left\langle z_{k} - z^{\ast}, F \left( z_{k} \right) \right\rangle = \lim_{k \to + \infty} k \left\langle z_{k} - z^{\ast}, \zeta_{k} + F \left( z_{k} \right) \right\rangle = 0$.
\end{proof}

Now we are able to prove the convergence rates in terms of the restricted gap and the natural gap.
\begin{proof}[Proof of Theorem~\ref{thm:gap-res}]
	For every $k \geq 1$, using successively the monotonicity of $F$, the fact that $\zeta_{k} \in N_{C} (z_{k})$, where $z_{k} \in C$ by its definition, and the Cauchy-Schwarz inequality, we deduce that for every $u \in C \cap \mathbb{B} (z^{\ast} ; \delta(z_{0}))$
	\begin{equation}
	\begin{split}
		\MoveEqLeft \sprod{F(u)}{z_{k} - u}
		\leq \sprod{F(z_{k})}{z_{k} - u}
		\leq \sprod{\zeta_{k} + F(z_{k})}{z_{k} - u} \\
		&= \sprod{\zeta_{k} + F(z_{k})}{z_{k} - z^{\ast}} + \sprod{\zeta_{k} + F(z_{k})}{z^{\ast} - u} \\
		&\leq \sprod{\zeta_{k} + F(z_{k})}{z_{k} - z^{\ast}} + \norm{\zeta_{k} + F(z_{k})} \norm{z^{\ast} - u} \\
		&\leq \sprod{\zeta_{k} + F(z_{k})}{z_{k} - z^{\ast}} + \delta(z_{0}) \norm{\zeta_{k} + F(z_{k})}.
	\end{split}
	\end{equation}
	Therefore, it follows from Proposition~\ref{thm:o-rates} that
	\begin{equation}
	\begin{split}
		\Gap(z_{k})
		&= \max_{u \in C \cap \mathbb{B}(z^{\ast}; \delta(z_{0}))} \sprod{F(u)}{z_{k} - u}
		\leq \sprod{\zeta_{k} + F(z_{k})}{z_{k} - z^{\ast}} + \delta(z_{0}) \norm{\zeta_{k} + F(z_{k})} \\
		&= o \left( \frac{1}{k} \right)
		\quad \text{as } k \to +\infty .
	\end{split}
	\end{equation}

	Concluding, by~\eqref{eq:gaps} we obtain
	\begin{equation}
		\Res(z_{k})
		\leq \left\lVert \zeta_{k} + F(z_{k}) \right\rVert
		= o \left( \frac{1}{k} \right),
		\quad \textrm{as } k \to +\infty,
	\end{equation}
	and the proof is complete.
\end{proof}

\section{Implementation Details}
In this section we report the details on the implementations for our GAN experiments.

\subsection{Architecture}\label{app:resnet}
In Table~\ref{tab:resnet_architecture} we describe the architectures that were used in the experiments on CIFAR-10. The models were selected replicating the set-up of~\cite{SN-GAN,lookahead-minmax}.

\newpage
\begin{table}[ht]
	\caption{ResNet architecture used for the CIFAR-10 experiments.}%
	\label{tab:resnet_architecture}
	\vskip 0.15in
	\begin{center}
		\begin{small}
			\begin{tabular}{c}
				\toprule
				\textbf{Generator (G)}\\
				\toprule
				\textit{Input:} $z \in \R^{128} \sim \mathcal{N}(0, I) $ \\
				Linear $128 \to 4,096 $ \\
				G-ResBlock\\
				G-ResBlock\\
				G-ResBlock\\
				Batch Normalisation \\
				ReLU  \\
				conv. (kernel: $3{\times}3$, $256 \to 3$, stride: $1$, pad: 1) \\
				$\operatorname{tanh}(\ph)$\\

				\toprule
				\textbf{Discriminator (D)}\\
				\toprule
				\textit{Input:} $x \in \R^{3{\times}32{\times}32} $ \\
				D-ResBlock\\
				D-ResBlock\\
				D-ResBlock\\
				D-ResBlock\\
				ReLU\\
				Avg. Pool (kernel: $ 8 \times 8 $)
				Linear $128 \to 1$\\
				Spectral Normalisation\\
				\bottomrule
			\end{tabular}%
		\end{small}
	\end{center}
	\vskip -0.1in
\end{table}

\subsection{Hyperparameters}
In Table~\ref{tab:fogda-params} we list the hyperparameters that were used for fOGDA-VI to obtain the results on CIFAR-10. The hyperparameters for LA-GDA were the same as in~\cite{lookahead-minmax}.

\begin{table}[ht]
	\caption{Hyperparameters used for the GAN experiments on CIFAR-10.}%
	\label{tab:fogda-params}
	\vskip 0.15in
	\begin{center}
		\begin{small}
			\begin{tabular}{ll}
				\toprule
				\multicolumn{2}{l}{\textbf{fOGDA-VI}} \\
				\midrule
				Batch size &= $128$ \\
				Iterations &= $500,000$ \\
				Adam $\beta_1$ &= $0.0$ \\
				Adam $\beta_2$ &= $0.9$ \\
				Update ratio D/G &= $5$ \\
				Learning rate for discriminator &= $1\times10^{-4}$\\
				Learning rate for generator &= $1\times10^{-4}$\\
				fOGDA $ \alpha $ &= $ 100 $\\
				fOGDA $ n $ &= $ 1000 $\\
				\bottomrule
			\end{tabular}%
		\end{small}
	\end{center}
	\vskip -0.1in
\end{table}

%\newpage
\subsection{PyTorch Code}
In the following we report the code of the wrapper for the fOGDA-VI optimiser written using the PyTorch\\\cite{pytorch} framework.
% using pythonhighlight package
\inputpython{fogda.py}{1}{150}

%%%%%%%%%%%%%%%%%%%%%%%%%%%%%%%%%%%%%%%%%%%%%%%%%%%%%%%%%%%%%%%%%%%%%%%%%%%%%%%
%%%%%%%%%%%%%%%%%%%%%%%%%%%%%%%%%%%%%%%%%%%%%%%%%%%%%%%%%%%%%%%%%%%%%%%%%%%%%%%

\end{document}